\newcommand{\MIN}{ \textsc{MIN}}
\newcommand{\MAX}{\textsc{MAX}}
\renewcommand{\R}{\mathbb{R}}
\newcommand{\ulambda}{\underline{\lambda}}
\newcommand{\olambda}{\bar{\lambda}}
\newcommand{\Rpg}{\mathbb{R}^{p}_{>}}
\theoremstyle{thmstyleone}%
\newtheorem{theorem}{Theorem}[section]
\newtheorem{proposition}[theorem]{Proposition}%
\newtheorem{lemma}[theorem]{Lemma}
\newtheorem{corollary}[theorem]{Corollary}
\theoremstyle{thmstyletwo}%
\newtheorem{example}[theorem]{Example}%
\newtheorem{remark}[theorem]{Remark}%
\theoremstyle{thmstylethree}%
\newtheorem{definition}[theorem]{Definition}%
\newtheorem{assumption}[theorem]{Assumption}
\begin{document}

\title[Using Scalarizations for the Approximation of Multiobjective Problems]{Using Scalarizations for the Approximation of Multiobjective Optimization Problems: Towards a General Theory}


\author*[1]{\fnm{Stephan} \sur{Helfrich}}\email{helfrich@mathematik.uni-kl.de}

\author[1,2]{\fnm{Arne} \sur{Herzel}}
\equalcont{These authors contributed equally to this work.}

\author[1]{\fnm{Stefan} \sur{Ruzika}}\email{ruzika@mathematik.uni-kl.de}
\equalcont{These authors contributed equally to this work.}

\author[2,3]{\fnm{Clemens} \sur{Thielen}}\email{clemens.thielen@tum.de}
\equalcont{These authors contributed equally to this work.}

\affil*[1]{\orgdiv{Department of Mathematics}, \orgname{RPTU Kaiserslautern-Landau}, \orgaddress{\street{Paul-Ehrlich-Str.~14}, \postcode{67663} \city{Kaiserslautern}, \country{Germany}}}

\affil[2]{\orgdiv{TUM Campus Straubing for Biotechnology and Sustainability}, \orgname{Weihenstephan-Triesdorf University of Applied Sciences}, \orgaddress{\street{Am~Essigberg~3}, \postcode{94315} \city{Straubing}, \country{Germany}}}

\affil[3]{\orgdiv{Department of Mathematics}, \orgname{Technical University of Munich}, \orgaddress{\street{Boltzmannstr.~3}, \postcode{85748} \city{Garching}, \country{Germany}}}

\abstract{We study the approximation of general multiobjective optimization problems with the help of scalarizations. 
    Existing results state that multiobjective minimization problems can be approximated well by norm-based scalarizations. However, for multiobjective maximization problems, only impossibility results are known so far. Countering this, we show that all multiobjective optimization problems can, in principle, be approximated equally well by scalarizations. In this context, we introduce a transformation theory for scalarizations that establishes the following: Suppose there exists a scalarization that yields an approximation of a certain quality for arbitrary instances of multiobjective optimization problems with a given decomposition specifying which objective functions are to be minimized / maximized. Then, for each other decomposition, our transformation yields another scalarization that yields the same approximation quality for arbitrary instances of problems with this other decomposition. In this sense, the existing results about the approximation via scalarizations for minimization problems carry over to any other objective decomposition -- in particular, to maximization problems -- when suitably adapting the employed scalarization.
    
    We further provide necessary and sufficient conditions on a scalarization such that its optimal solutions achieve a constant approximation quality. We give an upper bound on the best achievable approximation quality that applies to general scalarizations and is tight for the majority of norm-based scalarizations applied in the context of multiobjective optimization. As a consequence, none of these norm-based scalarizations can induce approximation sets for optimization problems with maximization objectives, which unifies and generalizes the existing impossibility results concerning the approximation of maximization problems.
 
    }

\keywords{Multiobjective Optimization, Approximation, Scalarizations, Norm-based Scalarizations}



\maketitle

\section{Introduction}\label{sec:Introduction}
Multiobjective optimization covers methods and techniques for solving optimization problems with several equally important but conflicting objectives, a field of study which is of growing interest both in theory and real-world applications. In such problems, solutions that optimize all objectives simultaneously usually do not exist. Hence, the notion of \emph{optimality} needs to be refined: A solution is said to be \emph{efficient} if any other solution that is better in some objective is necessarily worse in at least one other objective. The image of an efficient solution under the objectives is called a \emph{nondominated image}. A solution is said to be \emph{weakly efficient} if no other solution exists that is strictly better in each objective. It is widely accepted that some entity, called the decision maker, chooses a final preferred solution among the set of (weakly) efficient solutions.
When no prior preference information is available, a main goal of multiobjective optimization is to compute all nondominated images and, for each nondominated image, at least one corresponding efficient solution. 

\medskip

However, multiobjective optimization problems are typically \emph{inherently difficult}: they are hard to solve exactly~\citep{Ehrgott:book,Serafini:Complexity} and, moreover, the cardinalities of the set of nondominated images  may be exponentially large (or even infinite, e.g., for continuous problems), see e.g.~\cite{Boekler:output-sensitive,Ehrgott+Gandibleux:surveyandbibliographyMOCO} and the references therein.
In general, this impedes the applicability of exact solution methods and strongly motivates the \emph{approximation of multiobjective optimization problems} -- a concept to substantially reduce the number of required solutions while still obtaining a provable solution quality. Here, it is sufficient to find a set of (not necessarily efficient) solutions, called an \emph{approximation set}, that, for each possible image, contains a solution whose 
image is in every objective at least as good up to some multiplicative factor.

\medskip

A \emph{scalarization} is a technique to systematically transform a multiobjective optimization problem into  single-objective optimization problems with the help of additional parameters such as weights or reference points.\footnote{Throughout this paper, we define a scalarization as a set of such transformations (see~Definition~\ref{def:scalarizing}). In the literature, it is common that these transformations follow an underlying construction idea (see Section~\ref{sec:specific-scalarizing}). However, we do not explicitly assume this here.} The solutions obtained by solving these single-objective optimization problems are then interpreted in the context of multiobjective optimization (see, e.g.,~\cite{Ehrgott+Wiecek:SkalaSurvey,Jahn:General-Scalarization,Miettinen:scalarizing-functions,Wierzbicki:scalarizing-functions} for overviews on scalarizations). As a consequence, scalarization techniques are a key concept in multiobjective optimization: They often yield (weakly) efficient solutions and they are used as subroutines in algorithms for solving or approximating multiobjective optimization problems.
Unsurprisingly, there exists a vast amount of research concerning both exact and approximate solutions methods that use scalarizations as building blocks, see~\cite{Boekler:output-sensitive,Holzmann:modified-augmented,Klamroth2015,Wierzbicki:scalarizing-functions} and~\cite{Bazgan+etal:power-weighted-sum,Daskalakis+etal:Chord-Algorithm,Diakonikolas+Yannakakis:epsilon-convex,Glasser+etal:multi-hardness-proceedings,Glasser+etal:multi-hardness,Halfmann+etal:general-approx} and the references therein for a small selection.

A widely-known scalarization -- and probably the most simple example -- is the \emph{weighted sum scalarization}, where single-objective optimization problems are obtained by forming weighted sums of the multiple objective functions while keeping the feasible set unchanged. The weighted sum scalarization is frequently used, among others, in approximation methods for multiobjective optimization problems. In fact, it has been shown that optimal solutions of the weighted sum scalarization can be used to obtain approximation sets for each instance of each multiobjective minimization problem~\citep{Bazgan+etal:power-weighted-sum,Glasser+etal:multi-hardness-proceedings,Glasser+etal:multi-hardness,Halfmann+etal:general-approx}. However, these approximation results crucially rely on the assumption that all objectives are to be minimized. In fact, it is known that, for the weighted sum scalarization as well as for every other scalarization studied so far in the context of approximation, even the union of all sets of optimal solutions of the scalarization obtainable for any choice of its parameters does, in general, \emph{not} constitute an approximation set in the case of~\emph{maximization} problems~\citep{Bazgan+etal:power-weighted-sum,Glasser+etal:multi-hardness-proceedings,Glasser+etal:multi-hardness,Halfmann+etal:general-approx,Helfrich+etal:cones}. Consequently, general approximation methods building on the studied scalarizations cannot exist for multiobjective maximization problems.

\medskip

This raises several \emph{fundamental questions}: Are there intrinsic structural differences between minimization and maximization problems with respect to approximation via scalarizations?  Is it, in general, substantially harder or even impossible to construct a scalarization for maximization problems that is as powerful as the weighted sum scalarization is for minimization problems? More precisely, does there exist a scalarization such that, in arbitrary instances of arbitrary maximization problems, optimal solutions of the scalarization constitute an approximation set? Beyond that, can also optimization problems in which both minimization and maximization objectives appear be approximated by means of scalarizations? If yes, what structural properties are necessary in order for scalarizations to be useful concerning the approximation of multiobjective optimization problems in general?
We answer these questions in this paper and study the power of scalarizations for the approximation of multiobjective optimization problems from a general point of view. We focus on scalarizations built by \emph{scalarizing functions} that combine the objective functions of the multiobjective problem by means of strongly or strictly monotone and continuous functions. This captures many important and broadly-applied scalarizations such as the weighted sum scalarization, the weighted max-ordering scalarization, and norm-based scalarizations~\citep{Ehrgott+Wiecek:SkalaSurvey}\textcolor{black}{, but not scalarizations that change the feasible set. However, most important representatives of the latter class such as the budget constraint scalarization, Benson's method, and the elastic constraint method are capable of finding the whole efficient set and, thus, obviously yield approximation sets with approximation quality equal to one (see~\cite{Ehrgott:book,Ehrgott+Wiecek:SkalaSurvey}).} 

\medskip

We develop a \emph{transformation theory for scalarizations} with respect to approximation in the following sense: Suppose there exists a scalarization that yields an approximation of a certain quality for arbitrary instances of multiobjective optimization problems with a given decomposition specifying which objective functions are to be minimized / maximized. Then, for each other decomposition, our transformation yields another scalarization that yields the same approximation quality for arbitrary instances of problems with this other decomposition. We also study necessary and sufficient conditions for a scalarization such that optimal solutions can be used to obtain an approximation set, and determine an upper bound on the best achievable approximation quality. The computation of this upper bound simplifies for so-called weighted scalarizations and, in particular, is tight for the majority of norm-based scalarizations applied so far in the context of multiobjective optimization. As a consequence of this tightness, none of the above norm-based scalarizations can induce approximation sets for arbitrary instances of optimization problems containing maximization objective functions. Hence, this result unifies and generalizes all impossibility results concerning the approximation of maximization problems obtained in~\cite{Bazgan+etal:power-weighted-sum,Glasser+etal:multi-hardness-proceedings,Glasser+etal:multi-hardness,Halfmann+etal:general-approx,Helfrich+etal:cones}.

\subsection{Related Work}\label{sec:related work}
\emph{General approximation methods} seek to work under very weak assumptions and, thus, to be applicable to large classes of multiobjective optimization problems. In contrast, \emph{specific approximation methods} are tailored to problems with a particular structure. We refer to~\cite{Herzel+etal:survey} for an extensive survey on both general and specific approximation methods for multiobjective optimization problems.

Almost all general approximation methods for multiobjective optimization problems build upon the seminal work of~\cite{Papadimitriou+Yannakakis:multicrit-approx}, who show that, for any $\varepsilon>0$, a $(1 + \varepsilon)$-approximation set (i.e., an approximation set with approximation quality~$1 + \varepsilon$ in each objective) of polynomial size is guaranteed to exist in each instance under weak assumptions. Moreover, they prove that a $(1 + \varepsilon)$-approximation set can be computed in (fully) polynomial time for every $\varepsilon>0$ if and only if the so-called \emph{gap problem}, which is an approximate version of the canonical decision problem associated with the multiobjective problem, can be solved in (fully) polynomial time.

Subsequent work focuses on approximation methods that, given an instance and~$\alpha \geq 1$, compute approximation sets whose cardinality is bounded in terms of the cardinality of the smallest possible $\alpha$-approximation set while maintaining or only slightly worsening the approximation quality~$\alpha$~\citep{Bazgan+etal:min-pareto,Diakonikolas+Yannakakis:approx-pareto-sets,Diakonikolas+Yannakakis:epsilon-convex,Koltun+Papadimitriou:approx-dom-repr,Vassilvitskii+Yannakakis:trade-off-curves}. Additionally, the existence result of~\cite{Papadimitriou+Yannakakis:multicrit-approx} has recently been improved by~\cite{Herzel+etal:dualrestrict}, who show that, for any $\varepsilon>0$, an approximation set that is exact in one objective while ensuring an approximation quality of $1 + \varepsilon$ in all other objectives always exists in each instance under the same assumptions.

As pointed out in~\cite{Halfmann+etal:general-approx}, the gap problem is not solvable in polynomial time unless $\textsf{P}=\textsf{NP}$ for problems whose single-objective version is \textsf{APX}-complete and coincides with the weighted sum problem. For such problems, the algorithmic results of~\cite{Papadimitriou+Yannakakis:multicrit-approx} and succeeding articles cannot be used. Consequently, other works study how the weighted sum scalarization and other scalarizations can be employed for approximation. \cite{Daskalakis+etal:Chord-Algorithm,Diakonikolas+Yannakakis:epsilon-convex} show that, in each instance, a set of solutions such that the convex hull of their images yields an approximation quality can be computed in (fully) polynomial time if and only if there is a (fully) polynomial-time approximation scheme for all single-objective optimization problems obtained via the weighted sum scalarization.

The results of~\cite{Glasser+etal:multi-hardness-proceedings,Glasser+etal:multi-hardness} imply that, in each instance of each $p$-objective \emph{minimization} problem and for any $\varepsilon>0$, a $((1 + \varepsilon) \cdot \delta \cdot p)$-approximation set can be computed in fully polynomial time provided that the objective functions are positive-valued and polynomially computable and a $\delta$-approximation algorithm for the optimization problems induced by the weighted sum scalarization exists. They also give analogous results for more general norm-based scalarizations, where the obtained approximation quality additionally depends on the constants determined by the norm-equivalence between the chosen norm and the $1$-norm.

\cite{Halfmann+etal:general-approx} present a method to obtain, in each instance of each \emph{biobjective minimization} problem and for any $0 < \varepsilon \leq 1$, an approximation set that guarantees an approximation quality of $(\delta \cdot (1 + 2\varepsilon))$ in one objective function while still obtaining an approximation quality of at least $(\delta \cdot (1 + \frac{1}{\varepsilon}))$ in the other objective function, provided a polynomial-time $\delta$-approximation algorithm for the problems induced by the weighted sum scalarization is available. This \enquote{trade-off} between the approximation qualities in the individual objectives is studied in more detail by~\cite{Bazgan+etal:power-weighted-sum}, who introduce a multi-factor notion of approximation and present a method that, in each instance of each $p$-objective \emph{minimization} problem for which a polynomial-time $\delta$-approximation algorithm for the problems induced by the weighted sum scalarization exists, computes a set of solutions such that each feasible solution is component-wise approximated within some (possibly solution-dependent) vector $(\alpha_1, \ldots, \alpha_p)$ of approximation qualities $\alpha_i \geq 1$ such that $\sum_{i: \alpha_i > 1} \alpha_i = \delta \cdot p + \varepsilon$.

From another point of view, the weighted sum scalarization can be interpreted as a special case of ordering relations that use cones to model preferences. \cite{Vanderpooten+etal:covers+approximations} study approximation in the context of general ordering cones and characterize how approximation with respect to some ordering cone carries over to approximation with respect to some larger ordering cone. In a related paper,~\cite{Helfrich+etal:cones} focus on \emph{biobjective minimization problems} and provide structural results on the approximation quality that is achievable with respect to the classical (Pareto) ordering cone by solutions that are efficient or approximately efficient with respect to larger ordering cones.

Notably, none of the methods and approximation results for minimization problems provided in~\cite{Bazgan+etal:power-weighted-sum,Glasser+etal:multi-hardness-proceedings,Glasser+etal:multi-hardness,Halfmann+etal:general-approx,Helfrich+etal:cones} can be translated to maximization problems in general: \cite{Glasser+etal:multi-hardness-proceedings,Glasser+etal:multi-hardness} and \cite{Halfmann+etal:general-approx} show that similar approximation results are impossible to obtain in polynomial time for maximization problems unless $\textsf{P} = \textsf{NP}$. \cite{Bazgan+etal:power-weighted-sum} provide, for any $p\geq2$ and polynomial~$\text{pol}$, an instance~$I$ with encoding length~$\lvert I \rvert$ of a $p$-objective maximization problem such that at least one solution not obtainable as an optimal solution of the weighted sum scalarization is not approximated by solutions that are obtainable in this way within a factor of $2^{\text{pol}(\lvert I \rvert)}$ in $p-1$ of the objective functions. 
Similarly, \cite{Helfrich+etal:cones} show that, for any set~$P$ of efficient solutions with respect to some larger ordering cone and any $\alpha \geq 1$, an instance of a biobjective maximization problem can be constructed such that the set~$P$ is not an $\alpha$-approximation set (in the classical sense).

To the best of our knowledge, the only known results tailored to general maximization problems are presented by~\cite{Bazgan+etal:fixed-number}. Here, rather than building on scalarizations, additional severe structural assumptions on the set of feasible solutions are proposed in order to obtain an approximation.

In summary, most of the known approximation methods that build on scalarizations focus on minimization problems. In fact, mainly impossibility results are known concerning the application of such methods for maximization problems and, to the best of our knowledge, a scalarization-based approximation of optimization problems with both minimization and maximization objectives has so far not been considered at all.

\subsection{Our Contribution}\label{sec:our contribution}
We study the power of optimal solutions of scalarizations with respect to approximation. We focus on scalarizations built by \emph{scalarizing functions} that combine the objective functions of the multiobjective problem by means of strongly or strictly monotone and continuous functions. 
In particular, we address the questions outlined above and study why existing approximation results and methods using scalarizations typically work well for minimization problems, but do not yield any approximation quality for maximization problems in general. To this end, we develop a transformation theory for scalarizations with respect to approximation in the following sense: Suppose there exists a scalarization that yields an approximation of a certain quality for arbitrary instances of multiobjective optimization problems with a given decomposition specifying which objective functions are to be minimized / maximized. Then, for each other decomposition, our transformation yields another scalarization that yields the same approximation quality for arbitrary instances of problems with this other decomposition. Hence, our results show that, in principle, the decomposition of the objectives into minimization and maximization objectives does not have an impact on how well multiobjective problems can be approximated via scalarizations. In particular, this shows that, with respect to approximation, equally powerful scalarizations exist for (pure) minimization and (pure) maximization problems and any other possible decomposition of the objectives into minimization and maximization objectives. Consequently, the lack of positive approximation results for maximization problems in the literature is not based on a general impossibility. Rather, it results from the fact that the scalarizations that work well for minimization problems (such as the weighted sum scalarization) have been used also for maximization problems, while our results show that different scalarizations work for the maximization case.

We further provide necessary and sufficient conditions for a scalarization such that optimal solutions of the scalarization can be used to obtain approximation sets for arbitrary instances of multiobjective problems with a certain objective decomposition. We give an upper bound on the best achievable approximation quality solely depending on the level sets of the scalarizing functions contained in the scalarization. We show that the computation of this upper bound simplifies for weighted scalarizations, and provide classes of scalarizations, which include all norm-based scalarizations applied in the context of multiobjective optimization, for which this upper bound is in fact tight. As a consequence of this tightness, none of the above norm-based scalarizations can induce approximation sets for arbitrary instances of optimization problems containing maximization objectives. Hence, this result unifies and generalizes all impossibility results concerning the approximation of maximization problems obtained in~\cite{Bazgan+etal:power-weighted-sum,Glasser+etal:multi-hardness-proceedings,Glasser+etal:multi-hardness,Halfmann+etal:general-approx,Helfrich+etal:cones}.


\section{Preliminaries}\label{sec:Preliminaries}
In this section, we revisit basic concepts from multiobjective optimization and state the assumptions made in this article. For a thorough introduction to the field of multiobjective optimization, we refer to~\cite{Ehrgott:book}.
In the following, if, for a set $Y \in \R^p$ and some index~$i \in \{1, \ldots,p\}$, there exists a $q \in \R^p$ such that $y_i \geq q_i$ for all $y \in Y$, we say that \emph{$Y$ is bounded from below in $i$ (by $q$)}. If there exists a $q \in \R^p$ such that $y_i \leq q_i$ for all $y \in Y$, we say that \emph{$Y$ is bounded from above in $i$ (by $q$)}. Note that a set $Y \subseteq \R^p$ is bounded (in the classical sense) if and only if $Y$ is bounded from above in all $i$ and bounded from below in all $i$.

\medskip

\noindent
We consider general multiobjective optimization problems with $p$~objectives each of which is to be minimized or maximized: Let $p \in \mathbb{N} \setminus \{0\}$ be, as is usually the case in multiobjective optimization, a fixed constant, and let 
\textcolor{black}{$\MIN \in 2^{\{1,\ldots,p\}}$, $\MAX \coloneqq \{1,\ldots,p\} \setminus \MIN$, and $\Pi \coloneqq (\MIN,\MAX)$}. 
Then, we call $\Pi$ an \emph{objective decomposition}
and we define multiobjective optimization problems as follows:
\begin{definition}\label{def:optimization problem}
	Let $\Pi = (\MIN,\MAX)$ be an objective decomposition. A \emph{$p$-objective optimization problem of type~$\Pi$} is given by a set of instances. Each instance~$I= (X,f)$ consists of a set~$X$ of feasible solutions and a vector $f = (f_1,\ldots, f_p)$ of objective functions~$f_i\colon X \to \R$, $i=1,\ldots, p$, where the objective functions~$f_i, i \in \MIN$, are to be minimized and the objective functions~$f_i, i \in \MAX$, are to be maximized.
	If $\MIN = \{1,\ldots, p\}$ and $\MAX = \emptyset$, the $p$-objective optimization problem of type~$\Pi$ is called a \emph{$p$-objective minimization problem}.
	If $\MIN =  \emptyset$ and $\MAX = \{1,\ldots, p\}$, the $p$-objective optimization problem of type~$\Pi$ is called a \emph{$p$-objective maximization problem}.
\end{definition}
Component-wise orders on $\mathbb{R}^p$, based on a given objective decomposition, induce relations between images of solutions:  
\begin{definition}\label{def:component-wise-order}
    Let $\Pi = (\MIN,\MAX)$ be an objective decomposition. For $y,y ' \in\mathbb{R}^p$, the \emph{weak component-wise order}, the \emph{component-wise order}, and the \emph{strict component-wise order (with respect to $\Pi$)} are defined by
    \begin{align*}
    &y \leqq_{\Pi} y' \;\vcentcolon\Leftrightarrow\; y_i \leq y'_i\; \text{for all}\; i \in \MIN,\; y_i \geq y'_i\; \text{for all}\; i \in \MAX,\\
    &y \leq_{\Pi} y' \; \vcentcolon\Leftrightarrow\; y_i \leq y'_i\; \text{for all}\; i \in \MIN,\; y_i \geq y'_i\; \text{for all}\; i \in \MAX \; \text{and}\; y \neq y',\\
    &y <_{\Pi} y' \; \vcentcolon\Leftrightarrow\; y_i < y'_i\; \text{for all}\; i \in \MIN,\; y_i > y'_i\; \text{for all}\; i \in \MAX,
    \end{align*}
    respectively. Furthermore, we write~$\R^p_{>} \coloneqq \{y \in \R^p \, \vert \, 0 < y_i, i=1,\ldots,p\}$.
\end{definition}\noindent
Based on these component-wise orders, multiobjective notions of optimality can be defined: 
\begin{definition}\label{def:domination}
	Let $\Pi$ be an objective decomposition. In an instance of a $p$-objective optimization problem of type $\Pi$, a solution~$x \in X$ (strictly) \emph{dominates} another solution $x' \in X$ if $f(x) \leq_{\Pi} f(x')$ ($f(x) <_{\Pi} f(x')$).
	A solution~$x \in X$ is called \emph{(weakly) efficient} if there does not exist any solution~$x' \in X$ that (strictly) dominates $x$. If a solution $x \in X$ is (weakly) efficient, then the corresponding point $y = f(x) \in \R^p$ is called \emph{(weakly) nondominated}.
	The set~$X_E \subseteq X$ of efficient solutions is called the \emph{efficient set}. The set~$Y_N= f(X_E) \subseteq \R^p$ of nondominated images is called the \emph{nondominated set}.
\end{definition}
In each instance of a $p$-objective optimization problem, it is then the goal to return a set~$X^* \subseteq X$ of feasible solutions whose image $f(X^*)$ under $f\colon X\to \R^p$ is the nondominated set~$Y_N$.

\medskip

One main issue of a multiobjective optimization problem is that the nondominated set $Y_N$ may consist of exponentially many images in general~\citep{Boekler:output-sensitive,Ehrgott+Gandibleux:surveyandbibliographyMOCO}, i.\,e., such problems are intractable. Approximation is a concept to substantially reduce the number of solutions that must be computed. Instead of requiring at least one corresponding efficient solution for each nondominated image, a solution whose image \enquote{almost} (by means of a multiplicative factor) dominates the nondominated image is sufficient. To ensure that approximation is meaningful and well-defined, a typical assumption made in the literature on both the approximation of single-objective and multiobjective optimization problems (see~\cite{Williamson+Shmoys:Approximation} and \cite{Bazgan+etal:fixed-number,Bazgan+etal:min-pareto,Diakonikolas+Yannakakis:approx-pareto-sets,Papadimitriou+Yannakakis:multicrit-approx,Vanderpooten+etal:covers+approximations}, respectively) is also used in this work:
\begin{assumption}\label{ass:positivity}
    In any instance of each $p$-objective optimization problem, the set $Y = f(X)$ of feasible points is a subset of~$\Rpg$. That is, the objective functions~$f_i \colon X \rightarrow \R_>$ map solutions to positive values, only.
\end{assumption}
\noindent
Approximation is then formally defined as follows:
\begin{definition}\label{def:approximation}
	Let $\Pi = (\MIN,\MAX)$ be an objective decomposition and let $\alpha \geq 1$ \textcolor{black}{be a constant}.
 In an instance~$I = (X,f)$ of a $p$-objective optimization problem of type~$\Pi$, we say that~$x' \in X$ is \emph{$\alpha$-approximated by}~$x \in X$, or~$x$ \emph{$\alpha$-approximates} $x'$, if $f_i(x) \leq \alpha \cdot f_i(x')$ for all $i \in \MIN$ and $f_i(x) \geq \frac{1}{\alpha} \cdot f_i(x')$ for all $i \in \MAX$. 
	A set~$P_\alpha \subseteq X$ of solutions is called an \emph{$\alpha$-approximation set} if, for any feasible solution~$x' \in X$, there exists a solution~$x \in P_\alpha$ that $\alpha$-approximates~$x'$.
\end{definition}

\medskip

\emph{Scalarizations} are common approaches to obtain (efficient) solutions~\citep{Ehrgott:book}. One large class of scalarizations transforms a multiobjective optimization problem into a single-objective optimization problem with the help of \emph{scalarizing functions}:
\begin{definition}\label{def:scalarizing}
	Given an objective decomposition~$\Pi$, a function $s\colon \Rpg \to \R$ is called
	\begin{itemize}
		\item \emph{strongly $\Pi$-monotone} if $y \leq_{\Pi} y'$ for~$y,y' \in \Rpg$ implies $s(y) < s(y')$, and
		\item \emph{strictly $\Pi$-monotone} if $y <_{\Pi} y'$ for $y,y' \in \Rpg$ implies $s(y) < s(y')$.
	\end{itemize}
	Then, a \emph{scalarizing function for~$\Pi$} is a function~$s\colon \Rpg \to \R$ that is continuous and (at least) strictly $\Pi$-monotone. The \emph{level set} of $s$ at some point~$y'$ is denoted by
    \begin{align*}
        L(y',s) \coloneqq \left\{ y \in \Rpg \, \vert \, s(y) = s(y') \right\}.
    \end{align*}
	A set~$S$ of scalarizing functions is referred to as a~\emph{scalarization for $\Pi$}.
\end{definition}
This definition is motivated by norm-based scalarizations~\citep{Ehrgott:book} and captures several important scalarizations such as the weighted sum scalarization (see Example~\ref{ex:weighted_sum}). These scalarizations typically subsume only scalarizing functions that follow the same underlying construction idea. Such a construction is motivated, for example, by (polynomial-time) solvability of the obtained single-objective optimization problems. However, we allow scalarizations to contain various different scalarizing functions for the sake of generality.


With the help of scalarizing functions, any instance of a multiobjective optimization problem can be transformed into instances of a single-objective optimization problem, for which solution methods are widely studied. 
\begin{definition}\label{def:scalarizing-function-optimal-solution}
	Let $s\colon \Rpg \to \R$ be a scalarizing function for an objective decomposition~$\Pi$. 
	In an instance~$I = (X,f)$ of a multiobjective optimization problem of type~$\Pi$, a solution $x \in X$ is called \emph{optimal for~$s$} if $s(f(x)) \leq s(f(x'))$ for each $x' \in X$.  
\end{definition}
Note that the minimization of the instance of a single-objective problem obtained by scalarizing functions (which is implicitly assumed both in Definition~\ref{def:scalarizing} and Definition~\ref{def:scalarizing-function-optimal-solution}) is without loss of generality. One could alternatively define strongly (strictly) $\Pi$-monotonicity of functions $s$ via $y \leq_{\Pi} y'$ ($y <_{\Pi} y'$) implies $s(y) > s(y')$, and optimality for $s$ of a solution $x \in X$ via $s(f(x)) \geq s(f(x'))$ for all $x' \in X$. Then, all results in this work are still valid.

In order to guarantee that optimal solutions exist for any scalarizing function, we additionally assume:
\begin{assumption}\label{ass:compact}
    In any instance of each $p$-objective optimization problem, the set $Y = f(X)$ of feasible points is compact.
\end{assumption}
Note that Assumption~\ref{ass:compact} is satisfied for a large variety of well-known optimization problems, including multiobjective formulations of (integer/mixed integer) linear programs with compact feasible sets, nonlinear problems with continuous objectives and compact feasible sets, and all combinatorial optimization problems. 

Summarizing, we assume that, in any instance of each $p$-objective optimization problem, the set $Y = f(X)$ of feasible points is a compact subset of $\Rpg$. This implies that the objective functions $f_i\colon X \rightarrow \R_>$ map solutions to positive values only, and that the set of images of feasible solutions is guaranteed to be bounded from below in all $i$ (by the origin). Hence, the set of images is bounded if and only if it is bounded from above in all $i$.

\medskip 
Before we interpret scalarizing functions and their optimal solutions in the context of multiobjective optimization, we collect some useful properties.
\begin{lemma}\label{lem:beam}
    Let~$\Pi = (\MIN, \MAX)$ be an objective decomposition. Let $s\colon \Rpg \rightarrow \R$ be a scalarizing function for~$\Pi$. Let $q, y \in \Rpg$. Then, there exists $\lambda \in \R_>$ such that~$s(q') = s(y)$, where $q' \in \Rpg$ is defined by $q'_i \coloneqq \lambda \cdot q_i$ for all $i \in \MIN$, $q'_i \coloneqq \frac{1}{\lambda} \cdot q_i$ for all $i \in \MAX$.
\end{lemma}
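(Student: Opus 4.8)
The plan is to consider the curve $\lambda \mapsto q'(\lambda)$ obtained by scaling the MIN-coordinates of $q$ up by $\lambda$ and the MAX-coordinates down by $\lambda$, and to show that the composition $g(\lambda) \coloneqq s(q'(\lambda))$ takes the value $s(y)$ for some $\lambda \in \R_>$. The key observations are monotonicity and continuity of $g$, together with the fact that $g$ becomes large as $\lambda \to \infty$ and small as $\lambda \to 0^+$, so that the intermediate value theorem applies.

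First I would verify that $q'(\lambda) \in \Rpg$ for every $\lambda \in \R_>$, which is immediate since $q \in \Rpg$ and $\lambda, \tfrac{1}{\lambda} > 0$; hence $g$ is well-defined on $\R_>$. Next I would note that $g$ is continuous: the map $\lambda \mapsto q'(\lambda)$ is continuous from $\R_>$ to $\Rpg$ (each coordinate is either $\lambda \cdot q_i$ or $\tfrac{1}{\lambda}\cdot q_i$), and $s$ is continuous by assumption, so $g$ is a composition of continuous functions. Then I would establish strict monotonicity of $g$: for $0 < \lambda < \mu$ we have $\lambda q_i < \mu q_i$ for $i \in \MIN$ and $\tfrac{1}{\lambda} q_i > \tfrac{1}{\mu} q_i$ for $i \in \MAX$, which is exactly $q'(\lambda) <_\Pi q'(\mu)$; since $s$ is (at least) strictly $\Pi$-monotone, this gives $g(\lambda) < g(\mu)$. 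So $g$ is a strictly increasing continuous function on $\R_>$.

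It remains to show that $s(y)$ lies in the range of $g$, i.e., that $g$ takes values both below and above $s(y)$. For this I would exhibit, for any target level, a scaling of the argument that strictly $\Pi$-dominates respectively is strictly $\Pi$-dominated by it, and use strict $\Pi$-monotonicity of $s$ to push the comparison through. Concretely, choose $\lambda$ large enough that $q'(\lambda)$ strictly $\Pi$-dominates $y$ (componentwise: $\lambda q_i > y_i$ for $i \in \MIN$ — wait, strict $\Pi$-domination means being \emph{better}, so for $i\in\MIN$ we need $q'_i$ small and for $i \in \MAX$ we need $q'_i$ large); rechecking the direction carefully, $y <_\Pi q'(\lambda)$ requires $y_i > \lambda q_i$ for all $i\in\MIN$, which forces $\lambda$ \emph{small}, and $y_i < \tfrac1\lambda q_i$ for all $i\in\MAX$, which also forces $\lambda$ small — so taking $\lambda$ sufficiently small yields $y <_\Pi q'(\lambda)$ and hence $s(y) < g(\lambda)$, while taking $\lambda$ sufficiently large yields $q'(\lambda) <_\Pi y$ and hence $g(\lambda) < s(y)$. (In the degenerate cases $\MIN = \emptyset$ or $\MAX = \emptyset$ the same argument works with only one family of inequalities.) By the intermediate value theorem applied to the continuous function $g$ on the interval between these two values of $\lambda$, there exists $\lambda \in \R_>$ with $g(\lambda) = s(y)$, i.e., $s(q') = s(y)$ for $q' = q'(\lambda)$, as claimed.

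The main obstacle I anticipate is purely bookkeeping: getting the direction of the strict component-wise order right (since MAX-objectives reverse the inequality) and making sure the "sufficiently small/large $\lambda$" choices simultaneously satisfy all $p$ strict inequalities — this is handled by taking, e.g., $\lambda < \min_{i\in\MIN} y_i/q_i$ and $\lambda < \min_{i\in\MAX} q_i/y_i$ for the small case, and the analogous maxima for the large case, all of which are well-defined positive numbers since $q, y \in \Rpg$. No deep idea is needed beyond continuity plus strict monotonicity plus the intermediate value theorem.
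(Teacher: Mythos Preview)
Your proof is correct and follows exactly the same route as the paper's: define the one-parameter function $g(\lambda) = s(q'(\lambda))$, observe continuity, exhibit values of $\lambda$ on either side of $s(y)$ via strict $\Pi$-monotonicity, and apply the intermediate value theorem (the paper uses the explicit choices $\underline{\lambda} = \tfrac{1}{2}\min\{y_i/q_i : i\in\MIN\}\cup\{q_i/y_i : i\in\MAX\}$ and $\overline{\lambda} = 2\max\{\ldots\}$, matching your bounds up to the safety factor). One bookkeeping slip, exactly of the kind you anticipated: in your middle paragraph you have the direction of $<_\Pi$ reversed --- small $\lambda$ actually gives $q'(\lambda) <_\Pi y$ and hence $g(\lambda) < s(y)$ (consistent with your own observation that $g$ is increasing) --- but your explicit bounds in the final paragraph are correct and the IVT conclusion is unaffected either way.
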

\begin{proof}
	Without loss of generality, let $\MIN = \{1,\ldots, k\}$ and $\MAX = \{k+1,\ldots, p\}$ for some $k \in \{0, \ldots, p\}$. Otherwise, the objectives may be reordered accordingly. Consider the function~$s_q \colon \R_> \to \R, s_q(\lambda) \coloneqq s( (\lambda \cdot q_1, \ldots, \lambda \cdot q_k,\frac{1}{\lambda} \cdot q_{k+1},\ldots,\frac{1}{\lambda} \cdot q_p))$. 
	Then,~$s_q$ is a continuous function. Choose \begin{align*}
	\ulambda &\coloneqq \frac{1}{2} \cdot \min\left\{ \frac{y_1}{q_1}, \ldots, \frac{y_k}{q_k}, \frac{q_{k+1}}{y_{k+1}}, \ldots, \frac{q_p}{y_p} \right\} \text{ and } \\ \olambda &\coloneqq 2 \cdot \max\left\{ \frac{y_1}{q_1},\ldots,\frac{y_k}{q_k},  \frac{q_{k+1}}{y_{k+1}},\ldots, \frac{q_p}{y_p}\right\}.
	\end{align*}
	Then~$\ulambda \cdot q_i < y_i < \olambda \cdot q_i$ for all $i =1, \ldots,k$ and $\ulambda \cdot q_i > y_i > \olambda \cdot q_i$ for all $i =k+1,\ldots, p$, which implies that
	\begin{align*}
	s_q(\ulambda) < s(y) < s_q(\olambda). 
	\end{align*}
	Since $s_q$ is continuous, by the intermediate value theorem, there exists some $\lambda \in \R_>$ such that \begin{align*}s\left(\left(\lambda \cdot q_1, \ldots, \lambda \cdot q_k,\frac{1}{\lambda} \cdot q_{k+1},\ldots,\frac{1}{\lambda} \cdot q_p\right)\right) = s_q(\lambda) = s(y).\end{align*}
\end{proof}
\begin{lemma}\label{lem:weak-monotonicity}
    Let $s\colon \Rpg \rightarrow \R$ be a scalarizing function for some objective decomposition~$\Pi$. Let $y, y' \in \Rpg$. Then,~$y \leqq_{\Pi} y'$ implies $s(y) \leq s(y')$.
\end{lemma}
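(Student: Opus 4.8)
The plan is to derive weak monotonicity from strict $\Pi$-monotonicity by a perturbation-and-limit argument, using the continuity of $s$. Suppose $y \leqq_{\Pi} y'$ with $y, y' \in \Rpg$. For each sufficiently small $\varepsilon > 0$ I would introduce the perturbed point $y^\varepsilon \in \Rpg$ obtained from $y$ by moving strictly in the "improving" direction of $\Pi$: set $y^\varepsilon_i \coloneqq y_i - \varepsilon$ for $i \in \MIN$ and $y^\varepsilon_i \coloneqq y_i + \varepsilon$ for $i \in \MAX$. Since $y \in \Rpg$ has only positive coordinates, every $\varepsilon$ with $0 < \varepsilon < \min_{i \in \MIN} y_i$ (and every $\varepsilon > 0$ in the degenerate case $\MIN = \emptyset$) yields $y^\varepsilon \in \Rpg$, so $s(y^\varepsilon)$ is well defined.

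The key step is to check that $y^\varepsilon <_{\Pi} y'$ for all such $\varepsilon$. For $i \in \MIN$, the hypothesis $y_i \leq y'_i$ gives $y^\varepsilon_i = y_i - \varepsilon < y_i \leq y'_i$; for $i \in \MAX$, the hypothesis $y_i \geq y'_i$ gives $y^\varepsilon_i = y_i + \varepsilon > y_i \geq y'_i$. Hence $y^\varepsilon <_{\Pi} y'$, and strict $\Pi$-monotonicity of $s$ immediately yields $s(y^\varepsilon) < s(y')$.

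Finally, I would let $\varepsilon \to 0^+$: since $y^\varepsilon \to y$, continuity of $s$ gives $s(y) = \lim_{\varepsilon \to 0^+} s(y^\varepsilon) \leq s(y')$, which is exactly the claim. I do not expect any genuine obstacle in this proof; the only points requiring a little care are ensuring the perturbed points remain in $\Rpg$ (handled by bounding $\varepsilon$ below the smallest $\MIN$-coordinate of $y$, with $\MIN = \emptyset$ treated separately) and observing that this argument simultaneously covers the trivial subcase $y = y'$, so that no separate case distinction is needed.
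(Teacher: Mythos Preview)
Your proof is correct but takes a genuinely different route from the paper's. The paper argues by contradiction: assuming $s(y) > s(y')$, it invokes Lemma~\ref{lem:beam} to find a scalar~$\lambda > 0$ such that the rescaled point $q$ (obtained from $y$ by multiplying the $\MIN$-coordinates by $\lambda$ and dividing the $\MAX$-coordinates by $\lambda$) satisfies $s(q) = s(y')$; it then argues that $\lambda < 1$, hence $q <_{\Pi} y \leqq_{\Pi} y'$, which contradicts strict $\Pi$-monotonicity. Your approach instead perturbs $y$ additively to force strict $\Pi$-domination of $y'$, applies strict monotonicity directly, and passes to the limit via continuity. Your argument is more elementary and self-contained, needing neither Lemma~\ref{lem:beam} nor a contradiction structure; the paper's version, on the other hand, reuses the multiplicative ``beam'' construction that it has already set up and will employ again later (e.g., in Lemma~\ref{lem:two-solutions-sufficient}), so it fits naturally into the paper's toolkit.
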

\begin{proof}
    Again, let $\Pi = (\{1,\ldots,k\},\{k+1,\ldots, p\} )$ for some $k \in \{0,\ldots,p\}$ without loss of generality. Let~$y \leqq_{\Pi} y'$. For the sake of a contradiction, assume that $s(y) > s(y')$. Then, by Lemma~\ref{lem:beam}, there exists~$\lambda \in \R_>$ such that
    $s(q) = s(y')$, where $q \in \Rpg$ is defined by
    \begin{align*}
        q \coloneqq \left(\lambda \cdot y_1,\ldots, \lambda \cdot y_k, \frac{1}{\lambda} \cdot y_{k+1},\ldots, \frac{1}{\lambda} \cdot y_p \right).
    \end{align*}
    Note that $\lambda < 1$ since, otherwise, either $q = y$ or $q >_{\Pi} y$ and, thus, $s(y') = s(q) \geq s(y)$ by the strict monotonicity of $s$. We obtain $q <_{\Pi} y \leqq_{\Pi} y'$ and, therefore, $s(q) < s(y')$ contradicting that $s(q) = s(y')$.
\end{proof}

\medskip

Concerning scalarizing functions, a natural question is whether optimal solutions for a scalarizing function~$s$ are always efficient.
\begin{proposition}
	Let $\Pi$ be an objective decomposition. Let~$I = (X,f)$ be an instance of a $p$-objective optimization problem of type $\Pi$ and~$s\colon \Rpg \to \R$ be a scalarizing function for $\Pi$. Then any solution~$x \in X$ that is optimal for~$s$ is weakly efficient. Moreover, there exists a solution~$x \in X$ that is optimal for~$s$ and also efficient. If~$s$ is strongly $\Pi$-monotone, then any solution~$x \in X$ that is optimal for~$s$ is efficient.
\end{proposition}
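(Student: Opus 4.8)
The plan is to treat the three assertions separately. The first and the third follow immediately by contradiction from the monotonicity built into the definition of a scalarizing function, so I would dispose of them first. If $x \in X$ is optimal for $s$ but not weakly efficient, then by Definition~\ref{def:domination} there is an $x' \in X$ with $f(x') <_\Pi f(x)$; since $s$ is (at least) strictly $\Pi$-monotone this gives $s(f(x')) < s(f(x))$, contradicting optimality of $x$ for $s$. The third assertion is obtained by the same argument with $\leq_\Pi$ in place of $<_\Pi$ and strong in place of strict $\Pi$-monotonicity: if $s$ is strongly $\Pi$-monotone and $x$ is optimal for $s$ but not efficient, then a solution $x'$ dominating $x$, i.e.\ with $f(x') \leq_\Pi f(x)$, satisfies $s(f(x')) < s(f(x))$, again a contradiction.

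The substantive part is to produce a solution that is simultaneously optimal for $s$ and efficient. I would first use Assumption~\ref{ass:compact}: since $Y = f(X)$ is compact and $s$ is continuous, $s$ attains its minimum on $Y$, hence the set $Y^* \coloneqq \{\, y \in Y : s(y) \le s(y') \text{ for all } y' \in Y \,\}$ of images of solutions optimal for $s$ is nonempty; moreover, being the intersection of the compact set $Y$ with the preimage of a single value under the continuous map $s$, the set $Y^*$ is itself compact. Now consider the continuous linear functional $g \colon \R^p \to \R$, $g(y) \coloneqq \sum_{i \in \MIN} y_i - \sum_{i \in \MAX} y_i$, let $\bar y$ be a minimizer of $g$ over the compact set $Y^*$, and choose $\bar x \in X$ with $f(\bar x) = \bar y$. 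By construction $\bar x$ is optimal for $s$.

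It remains to show that $\bar x$ is efficient. If it were not, some $x' \in X$ would dominate it, i.e.\ $f(x') \leq_\Pi \bar y$; by Lemma~\ref{lem:weak-monotonicity} this yields $s(f(x')) \le s(\bar y) = \min_{y \in Y} s(y)$, so in fact $s(f(x')) = s(\bar y)$ and therefore $f(x') \in Y^*$. On the other hand, $f(x') \leq_\Pi \bar y$ implies $g(f(x')) < g(\bar y)$, contradicting the minimality of $\bar y$ over $Y^*$. Two points are left to check, both routine: that $g(y) < g(y')$ whenever $y \leq_\Pi y'$ (each $\MIN$-coordinate of $y$ does not exceed that of $y'$, each $\MAX$-coordinate is not smaller, and at least one coordinate is strict), and the compactness of $Y^*$. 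I expect the only genuine obstacle to be keeping the mixed minimize/maximize signs in $g$ and in $\leq_\Pi$ consistent; conceptually the whole argument rests on Assumption~\ref{ass:compact} (so that optimal solutions and the minimizer $\bar y$ exist) together with Lemma~\ref{lem:weak-monotonicity} (so that any dominator of an optimal image is again an optimal image).
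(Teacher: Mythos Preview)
Your proof is correct. The first and third assertions match the paper exactly. For the second assertion, you take a different but equally valid route: the paper invokes the external stability of the nondominated set when $Y=f(X)$ is compact (a standard result cited from~\cite{Ehrgott:book}), so that any optimal solution for~$s$ is dominated by some efficient solution~$x'$, and then uses Lemma~\ref{lem:weak-monotonicity} to conclude that $x'$ is also optimal for~$s$. You instead avoid this citation entirely by observing that the set~$Y^*$ of optimal images is itself compact and then minimizing the strongly $\Pi$-monotone linear functional $g(y)=\sum_{i\in\MIN}y_i-\sum_{i\in\MAX}y_i$ over~$Y^*$; any dominator of the resulting minimizer would, by Lemma~\ref{lem:weak-monotonicity}, again lie in~$Y^*$ but have strictly smaller $g$-value, a contradiction. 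Your argument is more self-contained and in effect re-derives a special case of external stability on the fly; the paper's version is shorter because it outsources this step to the literature. Note that your~$g$ is precisely the weighted sum scalarizing function from Example~\ref{ex:weighted_sum} with unit weights, so its strong $\Pi$-monotonicity is already established in the paper.
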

\begin{proof}
	Let~$x \in X$ be an optimal solution for~$s$. Assume that~$x$ is not weakly efficient, i.e., there exists~$x' \in X$ such that $f(x') <_{\Pi} f(x)$. Then the strict $\Pi$-monotonicity of $s$ implies that $s(f(x')) < s(f(x))$ contradicting that~$x$ is optimal for~$s$. 
	
	Since~$f(X)$ is a compact set, the continuous function~$s$ attains its minimum on~$f(X)$, i.e., there exists a solution~$x \in X$ that is optimal for~$s$. Moreover, it is well-known that the nondominated set~$Y_N$ is externally stable if $Y = f(X)$ is compact~\citep{Ehrgott:book}. Thus, if $x$ is not efficient, there exists an efficient solution~$x'$ dominating~$x$. Lemma~\ref{lem:weak-monotonicity} yields that $s(f(x')) \leq s(f(x))$, which implies that (the efficient solution) $x'$ is also optimal for~$s$.
	
	If~$s$ is strongly $\Pi$-monotone, then, for any solution~$x \in X$ that is not efficient, there exists a solution~$x' \in X$ with~$f(x') \leq_{\Pi} f(x)$ and, therefore, $s(f(x')) < s(f(x))$. Hence, any optimal solution for~$s$ must be efficient.		
\end{proof}
\begin{example}\label{ex:weighted_sum}
    Consider an objective decomposition~$\Pi = (\MIN, \MAX)$. Then, for any (fixed) weight vector~$w = (w_1, \ldots,w_p) \in \Rpg$, the function $s_w \colon \R_>^p \to \R, s_w(y) \coloneqq \sum_{i \in \MIN} w_i \cdot y_i - \sum_{i \in \MAX} w_i \cdot y_i$ defines a scalarizing function that is strongly $\Pi$-monotone. 
    The scalarizing function~$s_w$ is called \emph{weighted sum scalarizing function} with weights~$w_1, \ldots,w_p$. The set of all weighted sum scalarizing functions~$\{ s_w(y) = \sum_{i \in \MIN} w_i \cdot y_i - \sum_{i \in \MAX} w_i \cdot y_i \; \vert \; w \in \Rpg \}$ is called \emph{weighted sum scalarization}.
    Typically, a feasible solution that is optimal for some weighted sum scalarizing function is called \emph{supported}~\citep{Bazgan+etal:power-weighted-sum,Ehrgott:book}. 
\end{example}

\textcolor{black}{Note that, in the literature, the single-objective optimization problem obtained by a weighted sum scalarizing function applied to instances of multiobjective \emph{maximization} problems typically reads as $\max_{x \in X} \sum_{i = 1}^p w_i \cdot f_i(x)$. In our notation used in the above example, the single-objective problem reads as $\min_{x\in X} - \sum_{i = 1}^p w_i \cdot f_i(x)$. Since $\min_{x\in X} - \sum_{i = 1}^p w_i \cdot f_i(x) = - \max_{x \in X} \sum_{i = 1}^p w_i \cdot f_i(x)$, the optimization problems are indeed equivalent in the sense that the optimal solution sets of both problems coincide.}

\medskip

\noindent
We generalize the concept of supportedness to arbitrary scalarizations:
\begin{definition}
	Let $S$ be a scalarization (of finite or infinite cardinality) for an objective decomposition~$\Pi$. In an instance of a multiobjective optimization problem of type~$\Pi$, a solution $x \in X$ is called \emph{$S$-supported} if there exists a scalarizing function $s \in S$ such that~$x$ is optimal for~$s$.
\end{definition}
Note that optimal solutions for a scalarizing function are not necessarily unique. Moreover, different optimal solutions can be mapped to different images in $\R^p$, and, thus, contribute different information to the solution/approximation process. However, given a scalarization~$S$, it is often the case that not all $S$-supported solutions must be computed to draw conclusions on the nondominated set. Instead, it is sufficient to compute a set of solutions that contains, for each~$s \in S$, at least one solution that is optimal for $s$, see, for example,~\cite{Boekler:output-sensitive}. Hence, we define:
\begin{definition}
    Let $S$ be a scalarization (of finite or infinte cardinality) for an objective decomposition~$\Pi$. In an instance of a multiobjective optimization problem of type~$\Pi$, a set of solutions~$P \subseteq X$ is an \emph{optimal solution set for $S$}, if, for each scalarizing function~$s \in S$, there is a solution~$x \in P$ that is optimal for $s$.
\end{definition}
Note that the set of $S$-supported solutions is the largest optimal solution set for $S$ in the sense that it is the union of all optimal solution sets for $S$.
\begin{example}\label{ex:weighted-sum-optimal-solution-set}
    Let $I = (X,f)$ be an instance of a bi-objective minimization problem such that $Y = \text{conv}(\{q^1,q^2\})$ for some $q^1,q^2 \in \Rpg$ with $q^1_1 < q^2_1$ and $q^1_2 > q^2_2$. Let~$S$ be the weighted sum scalarization (see Example~\ref{ex:weighted_sum}).  Then, $X$ is the set of ($S$-) supported solutions, and $\{x^1,x^2\}$ with $f(x^1) = q^1$ and $f(x^2) = q^2$ is an optimal solution set for~$S$ with minimum cardinality.
\end{example}

\section{Transforming Scalarizations}\label{sec:duality}
In this section, we study the approximation quality that can be achieved for multiobjective optimization problems by means of optimal solutions of scalarizations.  
Countering the existing impossibility results for maximization problems (see Section~\ref{sec:related work}), we show that, in principle, scalarizations may serve as building blocks for the approximation of any multiobjective optimization problem: If there exists a scalarization~$S$ for an objective decomposition~$\Pi$ such that, in each instance of each multiobjective optimization problem of type~$\Pi$, every optimal solution set for~$S$ is an approximation set, then, for any other objective decomposition~$\Pi'$, there exists a scalarization~$S'$ for which the same holds true (with the same approximation quality).
To this end, given a set $\Gamma \subseteq \{1,\ldots,p\}$, we define a \enquote{flip function}~$\sigma^{\Gamma}\colon\Rpg \rightarrow \Rpg$ via
\begin{align}\label{eq:flipper}
    \sigma^{\Gamma}_i(y) &\coloneqq \begin{cases} \frac{1}{y_i}, &\text{ if } i \in \Gamma,\\
    y_i,&\text{ else.}
    \end{cases}
\end{align}
Note that $\sigma^{\Gamma}$ is continuous, bijective, and self-inverse, i.e., $\sigma^{\Gamma}(\sigma^{\Gamma}(y)) = y$ for all $y \in \Rpg$. 

\medskip 

In the remainder of this section, let an objective decomposition~$\Pi = (\MIN,\MAX)$ be given. Using~$\sigma^{\Gamma}$, we define a transformed objective decomposition by reversing the direction of optimization of all objective functions $f_i$, $i \in \Gamma$. Formally, this is done as follows: 
\begin{definition}\label{def:dual-objective-decomposition}
    For $\Pi = (\MIN,\MAX)$, the \emph{$\Gamma$-transformed decomposition} $\Pi^{\Gamma} = (\MIN^{\Gamma},\MAX^{\Gamma})$ (of $\Pi$) is defined by $\MIN^{\Gamma} \coloneqq (\MIN \setminus \Gamma) \cup (\Gamma \cap \MAX)$ and $\MAX^{\Gamma} \coloneqq (\MAX \setminus \Gamma) \cup (\Gamma \cap \MIN)$.
\end{definition} 
It is known (e.g., from~\cite{Papadimitriou+Yannakakis:multicrit-approx}) that any $p$-objective optimization problem of type~$\Pi$ can be transformed with the help of $\sigma^{\Gamma}$ to a $p$-objective optimization problem of type~$\Pi^{\Gamma}$: for any instance $I = (X,f)$ of a given $p$-objective optimization problem of type~$\Pi$, define an instance~$I^{\Gamma} =(X^{\Gamma},f^{\Gamma})$ of some $p$-objective optimization problem of type~$\Pi^{\Gamma}$ via $X^{\Gamma} \coloneqq X$ and  $f^{\Gamma}\colon X \rightarrow \Rpg, f^{\Gamma}(x) \coloneqq \sigma^{\Gamma}(f(x))$. 
The instance~$I^{\Gamma}$ is equivalent to~$I$ in the sense that, for any two solutions~$x, x^{\Gamma} \in X$, the solution~$x^{\Gamma}$ is (strictly) dominated by the solution~$x$ in $I$ if and only if $x^{\Gamma}$ is (strictly) dominated by~$x$ in~$I^{\Gamma}$. \textcolor{black}{Moreover, it is easy to see that our assumption that the set of feasible points is a compact subset of $\Rpg$ is preserved under this transformation: If $f(X)$ is a compact subset of $\Rpg$, then $f^{\Gamma}(X)$ is also a compact subset of $\Rpg$.} Further, this transformation is compatible with the notion of approximation: For any $\alpha \geq 1$, a solution~$x' \in X$ is $\alpha$-approximated by a solution $x \in X$ in~$I$ if and only if~$x'$ is $\alpha$-approximated by~$x$ in~$I^{\Gamma}$. This means that a set $P_\alpha \subseteq X$ of solutions is an $\alpha$-approximation set for~$I$ if and only if $P_\alpha$ is an $\alpha$-approximation set for~$I^{\Gamma}$. 
Note that this transformation is self-inverse, i.e., $(I^{\Gamma})^{\Gamma}=I$. Thus, we call $I^{\Gamma}$ the \emph{$\Gamma$-transformed instance} of~$I$, and the $p$-objective optimization problem of type~$\Pi^{\Gamma}$ that consists of all $\Gamma$-transformed instances the \emph{$\Gamma$-transformed optimization problem}.
Similarly, we define $\Gamma$-transformed scalarizing functions:
\begin{definition}\label{def:transformed-scalarization}
    Let $s\colon \Rpg \to \R$ be a scalarizing function for $\Pi$ and let $\Gamma \subseteq \{1,\ldots, p\}$. We define the \emph{$\Gamma$-transformed scalarizing function}~$s^{\Gamma}\colon \Rpg \to \R$ (of $s$) by
    \begin{align*}
        s^{\Gamma}(y) \coloneqq s\left(\sigma^{\Gamma}(y) \right).
    \end{align*}
    Given a scalarization~$S$ for $\Pi$, we call $S^{\Gamma} \coloneqq \left\{s^{\Gamma} \, \big\vert \, s \in S\right\}$ the \emph{$\Gamma$-transformed scalarization} (of $S$).
\end{definition}\noindent
Note that the scalarizing function~$\left(s^{\Gamma}\right)^{\Gamma} $, i.e., the $\Gamma$-transformed scalarizing function of the $\Gamma$-transformed scalarizing function of~$s$, equals the scalarizing function~$s$: For each $y \in \Rpg$, we have
\begin{align*}
    \left(s^{\Gamma}\right)^{\Gamma}(y) = s^{\Gamma}\left(\sigma^{\Gamma}(y)\right) =  s\left(\sigma^{\Gamma}\left(\sigma^{\Gamma}(y)\right) \right) = s(y).
\end{align*}
The next lemma shows that scalarizing functions for $\Pi$ are indeed mapped to scalarizing functions for~$\Pi^{\Gamma}$:
\begin{lemma}\label{lem:transform-is-scalarizing}
    Let $s$ be a scalarizing function for $\Pi$. Then, $s^{\Gamma}$ is a scalarizing function for $\Pi^{\Gamma}$. 
\end{lemma}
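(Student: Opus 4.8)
The plan is to verify the two defining properties of a scalarizing function from Definition~\ref{def:scalarizing}: continuity and strict $\Pi^{\Gamma}$-monotonicity. Continuity of $s^{\Gamma}$ is immediate: $s^{\Gamma} = s \circ \sigma^{\Gamma}$ is a composition of continuous functions, since $\sigma^{\Gamma}$ is continuous (as noted right after~\eqref{eq:flipper}) and $s$ is continuous by assumption.

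For the monotonicity part, the key observation I would record first is how $\sigma^{\Gamma}$ interacts with the component-wise orders. Concretely, for $y, y' \in \Rpg$ one has $y <_{\Pi^{\Gamma}} y'$ if and only if $\sigma^{\Gamma}(y) <_{\Pi} \sigma^{\Gamma}(y')$. This is a coordinate-by-coordinate check: on indices $i \notin \Gamma$, $\sigma^{\Gamma}$ acts as the identity and the direction of optimization in $\Pi^{\Gamma}$ agrees with that in $\Pi$ (by Definition~\ref{def:dual-objective-decomposition}); on indices $i \in \Gamma$, $\sigma^{\Gamma}_i(y) = 1/y_i$ is strictly \emph{decreasing} on $\R_>$, so a strict inequality $y_i < y'_i$ becomes $1/y_i > 1/y'_i$ and vice versa, which exactly matches the fact that $\Pi^{\Gamma}$ flips the direction of optimization on each $i \in \Gamma$. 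Writing $\Pi = (\{1,\dots,k\},\{k+1,\dots,p\})$ without loss of generality (as in the proofs of Lemma~\ref{lem:beam} and Lemma~\ref{lem:weak-monotonicity}) and splitting $\Gamma$ into its intersection with $\MIN$ and with $\MAX$ makes this bookkeeping routine.

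Given this equivalence, strict $\Pi^{\Gamma}$-monotonicity of $s^{\Gamma}$ follows in one line: if $y <_{\Pi^{\Gamma}} y'$, then $\sigma^{\Gamma}(y) <_{\Pi} \sigma^{\Gamma}(y')$, hence $s^{\Gamma}(y) = s(\sigma^{\Gamma}(y)) < s(\sigma^{\Gamma}(y')) = s^{\Gamma}(y')$ by the strict $\Pi$-monotonicity of $s$. This already suffices to conclude that $s^{\Gamma}$ is a scalarizing function for $\Pi^{\Gamma}$. If one additionally wants the analogous statement for strong monotonicity (useful later, though not strictly required by the lemma as stated), the same argument works verbatim with $<_{\Pi^{\Gamma}}$, $<_{\Pi}$ replaced by $\leq_{\Pi^{\Gamma}}$, $\leq_{\Pi}$, using that $\sigma^{\Gamma}$ is a bijection so that $y \neq y'$ is preserved.

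I do not anticipate a genuine obstacle here; the only thing requiring care is getting the case distinction on $\Gamma \cap \MIN$ versus $\Gamma \cap \MAX$ right when checking that $\sigma^{\Gamma}$ transports $<_{\Pi^{\Gamma}}$ to $<_{\Pi}$, i.e., keeping track of which coordinates flip their inequality direction and confirming this is consistent with Definition~\ref{def:dual-objective-decomposition}. Everything else is a direct appeal to continuity of compositions and to the hypothesis that $s$ is a scalarizing function for $\Pi$.
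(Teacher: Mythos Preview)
Your proposal is correct and follows essentially the same approach as the paper: continuity via composition, then the key observation that $y <_{\Pi^{\Gamma}} y'$ implies $\sigma^{\Gamma}(y) <_{\Pi} \sigma^{\Gamma}(y')$, from which strict $\Pi^{\Gamma}$-monotonicity of $s^{\Gamma}$ follows by the strict $\Pi$-monotonicity of $s$. Your coordinate-by-coordinate justification of the order-transport property and the remark on strong monotonicity add detail the paper leaves implicit, but the argument is the same.
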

\begin{proof}
    Since $s$ and $\sigma^{\Gamma}$ are continuous, $s^{\Gamma}$ is continuous as well. Let $y,y' \in \Rpg$ such that $y <_{\Pi^{\Gamma}} y'$. Then, $\sigma^{\Gamma}(y) <_{\Pi} \sigma^{\Gamma}(y')$ and, since $s$ is strictly $\Pi$-monotone, $s(\sigma^{\Gamma}(y)) \leq s(\sigma^{\Gamma}(y'))$. That is, the function~$s^{\Gamma}$ is strictly $\Pi^{\Gamma}$-monotone. 
\end{proof}
As discussed in Remark~\ref{rem:dual-alternatives} below, several meaningful, but not self-inverse, definitions of a $\Gamma$-transformed scalarizing function~$s^{\Gamma}$ exist. 

The next lemma shows that the $\Gamma$-transformed scalarizing function~$s^{\Gamma}$ of a scalarizing function~$s$ preserves optimality of a solution~$x$ in the sense that $x$ is optimal for $s$ in $I$ if and only if $x$ is optimal for $s^{\Gamma}$ in~$I^{\Gamma}$.
\begin{lemma}\label{lem:transformed-scalarization-optimal}
	Let $I = (X,f)$ be an instance of a $p$-objective optimization problem of type~$\Pi$ and let~$s\colon \Rpg \to \R$ be a scalarizing function for $\Pi$. Then a solution~$x\in X$ is an optimal solution for~$s$ in $I$ if and only if~$x$ is an optimal solution for $s^{\Gamma}$ in~$I^{\Gamma}$.
\end{lemma}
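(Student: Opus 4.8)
The plan is to reduce the statement to a pointwise identity between the two scalarized objective functions on a common feasible set. First I would recall that, by construction of the $\Gamma$-transformed instance, the feasible set is unchanged, $X^{\Gamma} = X$, so that ``optimal for $s$ in $I$'' and ``optimal for $s^{\Gamma}$ in $I^{\Gamma}$'' are both statements about minimizing a function over the \emph{same} set $X$. Thus it suffices to show that these two functions agree everywhere on $X$.

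Next I would carry out the short computation: for any $x \in X$, since $f^{\Gamma}(x) = \sigma^{\Gamma}(f(x))$ and $s^{\Gamma}(y) = s(\sigma^{\Gamma}(y))$, and since $\sigma^{\Gamma}$ is self-inverse (i.e. $\sigma^{\Gamma}(\sigma^{\Gamma}(y)) = y$ for all $y \in \Rpg$), we obtain
\begin{align*}
    s^{\Gamma}\bigl(f^{\Gamma}(x)\bigr) = s\Bigl(\sigma^{\Gamma}\bigl(\sigma^{\Gamma}(f(x))\bigr)\Bigr) = s\bigl(f(x)\bigr).
\end{align*}
Here one should note that $f^{\Gamma}(x) = \sigma^{\Gamma}(f(x)) \in \Rpg$ (because $\sigma^{\Gamma}$ maps $\Rpg$ into $\Rpg$), so that evaluating $s^{\Gamma}$ at $f^{\Gamma}(x)$ is legitimate, and likewise $s^{\Gamma}$ is a scalarizing function for $\Pi^{\Gamma}$ by Lemma~\ref{lem:transform-is-scalarizing}, so $I^{\Gamma}$ together with $s^{\Gamma}$ really is an admissible scalarized single-objective problem.

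Finally I would conclude: since $s^{\Gamma}(f^{\Gamma}(x)) = s(f(x))$ for every $x \in X = X^{\Gamma}$, the inequality $s(f(x)) \leq s(f(x'))$ holds for all $x' \in X$ if and only if $s^{\Gamma}(f^{\Gamma}(x)) \leq s^{\Gamma}(f^{\Gamma}(x'))$ holds for all $x' \in X^{\Gamma}$; that is, $x$ is optimal for $s$ in $I$ if and only if $x$ is optimal for $s^{\Gamma}$ in $I^{\Gamma}$. There is essentially no obstacle here: the only thing to be careful about is keeping track of the domains (everything stays within $\Rpg$) and invoking the self-inverse property of $\sigma^{\Gamma}$ at the right place; the rest is the observation that minimizers of two functions that coincide pointwise on a common domain are the same.
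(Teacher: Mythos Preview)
Your proposal is correct and follows essentially the same approach as the paper: both arguments establish the pointwise identity $s^{\Gamma}(f^{\Gamma}(x)) = s(\sigma^{\Gamma}(\sigma^{\Gamma}(f(x)))) = s(f(x))$ via the self-inverse property of~$\sigma^{\Gamma}$, and then observe that minimizers of coinciding functions over the common feasible set $X = X^{\Gamma}$ are the same. Your write-up is slightly more explicit about domain considerations and the reference to Lemma~\ref{lem:transform-is-scalarizing}, but the underlying reasoning is identical.
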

\begin{proof}  
    Note that
    \begin{align*} 
    s^{\Gamma}(\sigma^{\Gamma}(f(x''))) = s(\sigma^{\Gamma}(\sigma^{\Gamma}(f(x''))) = s(f(x''))
    \end{align*} 
    for all $x'' \in X$. This implies, for any~$x,x' \in X$, that
    $s(f(x)) \leq s(f(x'))$ if and only if $s^{\Gamma}(\sigma^{\Gamma}(f(x))) \leq s^{\Gamma}(\sigma^{\Gamma}(f(x')))$ and, hence, a feasible solution $x$ is optimal for $s$ in $I$ if and only if $x$ is optimal for $s^{\Gamma}$ in $I^{\Gamma} = (X,f^{\Gamma})$.
\end{proof}
Consequently, if every optimal solution set for $S$ is an approximation set in an instance $(X,f)$ of a $p$-objective optimization problem of type~$\Pi$, every optimal solution set for $S^{\Gamma}$ is an approximation set for the instance~$I^{\Gamma}$ of the $\Gamma$-transformed $p$-objective optimization problem with the same approximation quality:
\begin{corollary}\label{cor:transformed-supported-instance}
	Let~$S$ be a scalarization for~$\Pi$,
	let~$I = (X,f)$ be an instance of a $p$-objective optimization problem of type~$\Pi$, and let $\alpha \geq 1$. 
	Then, in~$I$, every optimal solution set for $S$ is an $\alpha$-approximation set if and only if, in the instance $I^{\Gamma}$ of the $\Gamma$-transformed optimization problem, every optimal solution set for $S^{\Gamma}$ is an $\alpha$-approximation set.
\end{corollary}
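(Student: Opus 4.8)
The plan is to derive the corollary purely by combining facts already established in the excerpt: that the $\Gamma$-transformation sets up a correspondence between optimal solution sets for $S$ in $I$ and optimal solution sets for $S^{\Gamma}$ in $I^{\Gamma}$, and that the $\Gamma$-transformation preserves $\alpha$-approximation sets. Recall that $X^{\Gamma} = X$, so both notions -- optimal solution set and $\alpha$-approximation set -- refer to subsets of the \emph{same} ground set $X$, which makes the two instances directly comparable. Note also that $I^{\Gamma}$ is a legitimate instance of a $p$-objective optimization problem of type $\Pi^{\Gamma}$ (the compactness and positivity assumptions are preserved under the transformation, as observed before Definition~\ref{def:transformed-scalarization}), and that $S^{\Gamma}$ is a scalarization for $\Pi^{\Gamma}$ by Lemma~\ref{lem:transform-is-scalarizing}, so the right-hand side of the asserted equivalence is well-posed.

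First I would prove the key intermediate claim: a set $P \subseteq X$ is an optimal solution set for $S$ in $I$ if and only if $P$ is an optimal solution set for $S^{\Gamma}$ in $I^{\Gamma}$. Unfolding the definition, $P$ is an optimal solution set for $S$ in $I$ exactly when, for every $s \in S$, some $x \in P$ is optimal for $s$ in $I$. By Lemma~\ref{lem:transformed-scalarization-optimal}, $x$ is optimal for $s$ in $I$ if and only if $x$ is optimal for $s^{\Gamma}$ in $I^{\Gamma}$, so the condition becomes: for every $s \in S$, some $x \in P$ is optimal for $s^{\Gamma}$ in $I^{\Gamma}$. Since $S^{\Gamma} = \{\, s^{\Gamma} \mid s \in S \,\}$, i.e., $s \mapsto s^{\Gamma}$ maps $S$ onto $S^{\Gamma}$, this is literally the statement that, for every $s' \in S^{\Gamma}$, some $x \in P$ is optimal for $s'$ in $I^{\Gamma}$ -- precisely the definition of $P$ being an optimal solution set for $S^{\Gamma}$ in $I^{\Gamma}$. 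Hence the families of optimal solution sets for $(S, I)$ and for $(S^{\Gamma}, I^{\Gamma})$ coincide.

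It then remains to combine this with the approximation-preservation fact recorded before Definition~\ref{def:transformed-scalarization}: a set $P_{\alpha} \subseteq X$ is an $\alpha$-approximation set for $I$ if and only if it is an $\alpha$-approximation set for $I^{\Gamma}$. The assertion that, in $I$, every optimal solution set for $S$ is an $\alpha$-approximation set reads: for all $P \subseteq X$, if $P$ is an optimal solution set for $S$ in $I$, then $P$ is an $\alpha$-approximation set for $I$. Rewriting the hypothesis via the intermediate claim and the conclusion via the approximation-preservation fact turns this, statement by statement, into: for all $P \subseteq X$, if $P$ is an optimal solution set for $S^{\Gamma}$ in $I^{\Gamma}$, then $P$ is an $\alpha$-approximation set for $I^{\Gamma}$ -- i.e., in $I^{\Gamma}$, every optimal solution set for $S^{\Gamma}$ is an $\alpha$-approximation set. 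This gives the claimed biconditional.

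There is no genuinely hard step; the only points requiring care are the bookkeeping around the surjection $s \mapsto s^{\Gamma}$ from $S$ onto $S^{\Gamma}$ (surjectivity suffices to pass between \enquote{$\forall s \in S$} and \enquote{$\forall s' \in S^{\Gamma}$}, though injectivity also holds since $\sigma^{\Gamma}$ is self-inverse) and the fact that $I$ and $I^{\Gamma}$ share the ground set $X$, so that both notions can be compared without further translation.
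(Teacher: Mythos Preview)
Your proof is correct and follows essentially the same approach as the paper: both argue that Lemma~\ref{lem:transformed-scalarization-optimal} forces the families of optimal solution sets for $(S,I)$ and $(S^{\Gamma},I^{\Gamma})$ to coincide, and then invoke the previously recorded fact that $\alpha$-approximation sets are preserved under the $\Gamma$-transformation. Your version is simply more explicit about the bookkeeping (the surjection $s\mapsto s^{\Gamma}$, the shared ground set, and well-posedness of the right-hand side), which the paper leaves implicit in its two-sentence proof.
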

\begin{proof}
	Lemma~\ref{lem:transformed-scalarization-optimal} implies that an optimal solution set for $S$ in~$I$ is an optimal solution set for $S^{\Gamma}$, and vice versa. The set~$S^{\Gamma}$ is an $\alpha$-approximation set in~$I$ if and only if it is an $\alpha$-approximation set in~$I^{\Gamma}$.
\end{proof}\noindent
As a consequence of Corollary~\ref{cor:transformed-supported-instance}, we obtain the following transformation theorem:
\begin{theorem}[Transformation Theorem for Scalarizations with respect to Approximation]\label{thm:transformed-supported-general}
    Let $\alpha \geq 1$. Let $S$ be a scalarization for $\Pi = (\MIN, \MAX)$ such that, in each instance of each $p$-objective optimization problem of type~$\Pi$, every optimal solution set for~$S$ is an $\alpha$-approximation set. Then, for any other objective decomposition~$\Pi'$, there exists a scalarization~$S'$ such that the same holds true: in each instance of each $p$-objective optimization problem of type~$\Pi'$, every optimal solution set for~$S'$ is an $\alpha$-approximation set. 
\end{theorem}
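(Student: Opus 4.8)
The plan is to reduce the statement to a single invocation of Corollary~\ref{cor:transformed-supported-instance}, once the correct index set~$\Gamma$ relating the two decompositions~$\Pi$ and~$\Pi'$ has been pinned down. Write $\Pi = (\MIN,\MAX)$ and $\Pi' = (\MIN',\MAX')$, and set $\Gamma \coloneqq (\MIN \setminus \MIN') \cup (\MIN' \setminus \MIN)$, the set of coordinates on which $\Pi$ and~$\Pi'$ disagree about the direction of optimization. I would first check by a short set computation that $\Pi^{\Gamma} = \Pi'$: since $\MIN \setminus \Gamma = \MIN \cap \MIN'$ and $\Gamma \cap \MAX = \MIN' \setminus \MIN$ (the latter because $\MIN' \setminus \MIN \subseteq \MAX$, while $\MIN \setminus \MIN' \subseteq \MIN$ is disjoint from~$\MAX$), Definition~\ref{def:dual-objective-decomposition} gives $\MIN^{\Gamma} = (\MIN \cap \MIN') \cup (\MIN' \setminus \MIN) = \MIN'$, and hence also $\MAX^{\Gamma} = \MAX'$. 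Because the disagreement set is symmetric in $\Pi$ and~$\Pi'$, the same computation with the roles of $\Pi$ and~$\Pi'$ interchanged yields $(\Pi')^{\Gamma} = \Pi$ as well (this also follows from the general identity $(\Pi^{\Gamma})^{\Gamma} = \Pi$, verified in the same way).

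With this $\Gamma$ fixed, I would define $S' \coloneqq S^{\Gamma}$, the $\Gamma$-transformed scalarization of~$S$ from Definition~\ref{def:transformed-scalarization}; by Lemma~\ref{lem:transform-is-scalarizing}, $S'$ is indeed a scalarization for $\Pi^{\Gamma} = \Pi'$, so it is an admissible candidate. It then remains to verify the approximation property. To that end, let $I' = (X',f')$ be an arbitrary instance of an arbitrary $p$-objective optimization problem of type~$\Pi'$, and let $J \coloneqq (I')^{\Gamma}$ be its $\Gamma$-transformed instance. Since the $\Gamma$-transform of a type-$\Pi'$ problem is a problem of type $(\Pi')^{\Gamma} = \Pi$, the instance~$J$ is an instance of some $p$-objective optimization problem of type~$\Pi$, and it still satisfies our standing assumptions (compact, positive-valued image), because these are preserved under the transformation. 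Hence the hypothesis on~$S$ applies to~$J$: every optimal solution set for~$S$ in~$J$ is an $\alpha$-approximation set.

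Now I would invoke Corollary~\ref{cor:transformed-supported-instance} for the type-$\Pi$ instance~$J$. Using that the instance transformation is self-inverse, $J^{\Gamma} = ((I')^{\Gamma})^{\Gamma} = I'$, so the corollary states that every optimal solution set for~$S$ in~$J$ is an $\alpha$-approximation set if and only if every optimal solution set for $S^{\Gamma} = S'$ in~$I'$ is an $\alpha$-approximation set. The left-hand side holds by the previous paragraph, so the right-hand side holds; and since $I'$ was an arbitrary instance of an arbitrary $p$-objective optimization problem of type~$\Pi'$, this is exactly the claimed property of~$S'$, which proves the theorem.

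I do not expect a genuine technical obstacle: the argument rests entirely on the already-established self-inverse behaviour of $\sigma^{\Gamma}$, of the instance transformation, and of the scalarizing-function transformation, together with Corollary~\ref{cor:transformed-supported-instance}. The two points that merely require care are (i) the bookkeeping verifying that $\Gamma \coloneqq (\MIN \setminus \MIN') \cup (\MIN' \setminus \MIN)$ really satisfies $\Pi^{\Gamma} = \Pi'$ and $(\Pi')^{\Gamma} = \Pi$, and (ii) applying the hypothesis correctly, namely observing that it quantifies over \emph{all} instances of \emph{all} $p$-objective optimization problems of type~$\Pi$, so it may legitimately be applied to~$J$ even though $J$ arises as the $\Gamma$-transform of a type-$\Pi'$ instance rather than being presented natively as a type-$\Pi$ instance.
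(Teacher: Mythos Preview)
Your proposal is correct and follows essentially the same argument as the paper: choose the symmetric-difference index set~$\Gamma$ (your $(\MIN \setminus \MIN') \cup (\MIN' \setminus \MIN)$ equals the paper's $(\MIN \setminus \MIN') \cup (\MAX \setminus \MAX')$), set $S' \coloneqq S^{\Gamma}$, transform an arbitrary type-$\Pi'$ instance~$I'$ to the type-$\Pi$ instance $(I')^{\Gamma}$, apply the hypothesis there, and conclude via Corollary~\ref{cor:transformed-supported-instance} using $((I')^{\Gamma})^{\Gamma} = I'$. Your write-up is slightly more explicit in verifying $\Pi^{\Gamma} = \Pi'$ and in citing Lemma~\ref{lem:transform-is-scalarizing} to confirm that $S'$ is a scalarization for~$\Pi'$, but the route is the same.
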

\begin{proof}
    Let $\Pi' = (\MIN',\MAX')$ be an objective decomposition. Set $\Gamma = (\MIN \setminus \MIN') \cup (\MAX \setminus \MAX')$. Then, $(\Pi')^{\Gamma} = \Pi$. Let $I'$ be an instance of a multiobjective optimization problem of type $\Pi'$. Then, $(I')^{\Gamma}$ is an instance of the $\Gamma$-transformed optimization problem, which is of type $(\Pi')^{\Gamma} = \Pi$, and by assumption, in $(I')^{\Gamma}$, every optimal solution set for $S$ is an $\alpha$-approximation set. But then, in $\left((I')^{\Gamma}\right)^{\Gamma} = I'$, every optimal solution set for $S^{\Gamma}$ is an $\alpha$-approximation set by Corollary~\ref{cor:transformed-supported-instance}. Hence, $S' \coloneqq S^{\Gamma}$ is the desired scalarization for~$\Pi'$.
\end{proof}
\begin{example}\label{ex:weighted_sum_dual}
    Let $\Pi^{\min} = (\{1,\ldots,p\},\emptyset)$  and $\Pi^{\max} = (\emptyset,\{1,\ldots,p\})$.  Recall that the weighted sum scalarizing function for $\Pi^{\min}$ with weights $w_1,\ldots,w_p > 0$ is $s_w\colon \Rpg \to \R_>, s_w(y) = \sum_{i=1}^p w_i \cdot y_i$. Then, $-s_w$ is the weighted sum scalarizing function for $\Pi^{\max}$. Let $S = \{s_w \colon \Rpg \to \R\, \vert \, w \in \Rpg\}$ and $-S = \{- s_w \colon \Rpg \to \R\, \vert \, w \in \Rpg\}$ be the weighted sum scalarizations for $\Pi^{\min}$ and $\Pi^{\max}$, respectively. It is known that, in each instance of each $p$-objective minimization problem, every optimal solution set for $S$ is a $p$-approximation set, but there exist instances of $p$-objective maximization problems for which the set of $-S$-supported solutions does not yield any constant approximation quality~\citep{Bazgan+etal:power-weighted-sum,Glasser+etal:multi-hardness-proceedings}.
    
    Consider~$\Gamma = \{1,\ldots,p\}$. Then, $\Pi^{\max}$ is the $\Gamma$-transformed objective decomposition of $\Pi^{\min}$, and vice versa. Thus, the opposite result holds for the corresponding $\Gamma$-transformed scalarizations: The $\Gamma$-transformed scalarization of~$S$, which is a scalarization for $\Pi^{\max}$, is the scalarization
    \begin{align*}
        S^{\Gamma} = \left\{s^{\Gamma}_w\colon \Rpg \to \R, s^{\Gamma}_w(y) =  \frac{w_1}{y_1} + \ldots + \frac{w_p}{y_p} \; \bigg\vert \; w \in \Rpg \right\}.
    \end{align*} The $\Gamma$-transformed scalarization of $-S$, which is a scalarization for $\Pi^{\min}$, is the scalarization
    \begin{align*}
        -S^{\Gamma} = \left\{s^{\Gamma}_w\colon \Rpg \to \R, s^{\Gamma}_w(y) =  - \frac{w_1}{y_1} - \ldots - \frac{w_p}{y_p} \; \bigg\vert \; w \in \Rpg\right\}.
    \end{align*}
    Hence, in each instance of each $p$-objective maximization problem, every optimal solutions set for $S^{\Gamma}$ is a $p$-approximation set, but there exist instances of $p$-objective minimization problems for which the set of~$-S^{\Gamma}$-supported solutions does not yield any constant approximation quality.
\end{example}
\begin{remark}\label{rem:dual-alternatives}
	Let~$S$ be a scalarization for~$\Pi$ and let $\Gamma \subseteq \{1,\ldots,p\}$.
	In fact, for each~$s \in S$, any continuous strictly increasing function~$g\colon s(\R_>^p) \to \R$ could be utilized to define~$s^{\Gamma}$ via $s^{\Gamma}(y) \coloneqq g\left(s\left(\sigma^{\Gamma}(y)\right)\right)$ while still obtaining the results of Lemma~\ref{lem:transformed-scalarization-optimal}, Corollary~\ref{cor:transformed-supported-instance}, and Theorem~\ref{thm:transformed-supported-general}.
	However, defining~$s^{\Gamma}$ as in Definition~\ref{def:transformed-scalarization}
	yields the additional property that~$(s^{\Gamma})^{\Gamma} = s$ for any scalarizing function~$s$, i.e., applying the transformation twice yields the original scalarizing function. 
\end{remark}
\begin{example}\label{ex:weighted-sum-alternative-dual}
    Let $\Pi^{\min} = (\{1,\ldots,p\},\emptyset)$. Since the weighted sum scalarizing function~$s_w\colon \Rpg \rightarrow \R, s_w(y) = \sum_{i=1}^p w_i \cdot f_i(x)$ for $\Pi^{\min}$ is positive-valued, one can alternatively define its $\Gamma$-transformation with the help of $g\colon \R_> \rightarrow \R_>, g(t) = - \frac{1}{t}$. For a $p$-objective maximization problem, the corresponding optimization problem induced by this transformation reads as
    $$
        \min_{x \in X}\, - \frac{1}{\textcolor{black}{\sum_{i=1}^p} w_i \cdot \frac{1}{f_i(x)} }.
    $$
    Note that this single-objective optimization problem is equivalent to 
    $$
        \max_{x \in X}\; \sum_{j=1}^p w_j \cdot \frac{1}{\textcolor{black}{\sum_{i=1}^p} w_i \cdot \frac{1}{f_i(x)}}
    $$
    in the sense that, in each instance, the optimal solution sets coincide. The function~$h_w\colon \Rpg \to \R_>, h_w(y) = \sum_{j=1}^p w_j \cdot \frac{1}{\textcolor{black}{\sum_{i=1}^p} w_i \cdot \frac{1}{y_i}}$ is known as the \emph{weighted harmonic mean}~\citep{Ferger:weighted-harmonic-mean}.
\end{example}
\begin{example}\label{ex:max-ordering-dual}
    In each instance of each $p$-objective minimization problem, every optimal solution set for the \emph{weighted max-ordering scalarization}~$S = \{ s_w\colon \Rpg \rightarrow \R_>, s_w(y) = \max_{i=1,\ldots,p} w_i \cdot y_i \, \vert \, w \in \Rpg\}$ must contain at least one efficient solution for each nondominated image~\citep{Ehrgott:book}, i.e., every optimal solutions set for $S$ is a $1$-approximation set.
    The transformed scalarizing function~$s_w \in S$ for maximization (i.e., the $\{1,\ldots,p\}$-transformed scalarizing function) is
    $$ s^{\{1,\ldots,p\}}_w\colon \Rpg \rightarrow \R_>, s^{\{1,\ldots,p\}}_w(y) =  \max_{i=1,\ldots,p} w_i \cdot \frac{1}{y_i}.$$
    Consequently, the transformed scalarization of the weighted max-ordering scalarization for maximization is $S^{\{1,\ldots,p\}} = \{s_w^{\{1,\ldots,p\}}\colon \R^p \to \R \, \vert \, w \in \Rpg\}$ and, in each instance of each $p$-objective maximization problem, every optimal solution set for $S^{\{1,\ldots,p\}}$ is a $1$-approximation set. 
    
    For a $p$-objective maximization problem and a scalarizing function~$s^{\{1,\ldots,p\}}_w \in S^{\{1,\ldots,p\}}$, one can rewrite the corresponding implied single-objective optimization problem: In each instance, it holds that
    \begin{align*}
    \min_{x \in X} \max_{i=1,\ldots,p} w_i \cdot \frac{1}{f_i(x)} =  \min_{x \in X} \, \frac{1}{\min_{i=1,\ldots,p} \frac{1}{w_i} f_i(x)} = \frac{1}{\max_{x \in X}  \min_{i=1,\ldots,p} \frac{1}{w_i} f_i(x)}.
    \end{align*}
    Hence, the optimal solution set of $\min_{x \in X} \max_{i=1,\ldots,p} w_i \cdot \frac{1}{f_i(x)}$ coincides with the optimal solution set of $\max_{x \in X} \min_{i=1,\ldots,p} \tilde{w}_i \cdot f_i(x)$, where $\tilde{w} = (\frac{1}{w_1},\ldots, \frac{1}{w_p}) \in \Rpg$.
    This means that, in each instance of each $p$-objective maximization problem, instead of solving all single-objective minimization problem instances obtained from scalarizing functions~$s^{\{1,\ldots,p\}} \in S^{\{1,\ldots,p\}}$, one can solve the single-objective maximization problem instances obtained from the functions in $\{ r_{\tilde{w}}\colon \Rpg \rightarrow \R, r_{\tilde{w}}(y) = \min_{i=1,\ldots,p} \tilde{w}_i \cdot y_i \, \vert \, \tilde{w} \in \Rpg \}$ to obtain a $1$-approximation set.
\end{example}

\section{Conditions for General Scalarizations}\label{sec:conditions}
Given an objective decomposition~$\Pi$ and $\alpha \geq 1$, we study sufficient and necessary conditions for a scalarization~$S$ such that, in each instance of each multiobjective optimization problem of type~$\Pi$, the set of $S$-supported solutions is an $\alpha$-approximation set. We also derive upper bounds on the best approximation quality~$\alpha$ that can be achieved by $S$-supported solutions and that solely depends on the level sets of the scalarizing functions. In the following, we assume without loss of generality that the objective decomposition~$\Pi = (\MIN,\MAX)$ is given such that $\MIN = \{1,\ldots, k\}$ and $\MAX = \{k+1, \ldots, p\}$ holds for some $k \in \{0, \ldots,p\}$. Otherwise, the objectives may be reordered accordingly.

\medskip

The first result in this section states that, for any \emph{finite} set~$S$ of scalarizing functions and any $\alpha \geq 1$, the set of $S$-supported solutions, and, thus, any optimal solution set for $S$, is not an $\alpha$-approximation set in general.
\begin{theorem}\label{thm:finite-scalarizations}
	Let~$S$ be a scalarization of finite cardinality for~$\Pi$. Then, for any~$\alpha \geq 1$, there exists an instance~$I$ of a multiobjective optimization problem of type~$\Pi$ such that the set of $S$-supported solutions is not an $\alpha$-approximation set. 
\end{theorem}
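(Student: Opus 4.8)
The plan is to construct, for a given finite scalarization $S = \{s_1,\dots,s_m\}$ and a given $\alpha \geq 1$, an instance with exactly $m+1$ feasible solutions whose images are pairwise so far apart (multiplicatively) that no one of them $\alpha$-approximates another, and such that each $s_j$ has a \emph{unique} optimizer among them; a simple counting argument then produces a feasible solution that is optimal for no $s_j$ and hence is $S$-supported only by points that fail to $\alpha$-approximate it. If $S = \emptyset$ the claim is immediate (the set of $S$-supported solutions is empty), so assume $m \geq 1$; since the construction uses two coordinates, we assume $p \geq 2$. Moreover, by the transformation theory of Section~\ref{sec:duality} we may assume $\Pi = (\{1,\dots,p\},\emptyset)$: given a finite $S$ for an arbitrary $\Pi$, put $\Gamma \coloneqq \MAX$, so that $\Pi^{\Gamma} = (\{1,\dots,p\},\emptyset)$ and, by Lemma~\ref{lem:transform-is-scalarizing}, $S^{\Gamma}$ is a finite scalarization for $\Pi^{\Gamma}$; if $\hat I$ is an instance of type $\Pi^{\Gamma}$ in which the set of $S^{\Gamma}$-supported solutions is not an $\alpha$-approximation set, then Lemma~\ref{lem:transformed-scalarization-optimal} (applied to $\hat I^{\Gamma}$, which is of type $\Pi$, it shows that a solution is $S$-supported in $\hat I^{\Gamma}$ iff it is $(S^{\Gamma})^{\Gamma}=S^{\Gamma}$-supported in $\hat I$) together with the fact, noted in Section~\ref{sec:duality}, that $\Gamma$-transformation preserves $\alpha$-approximation sets, shows that $\hat I^{\Gamma}$ has the same property for $S$. (Alternatively, one carries out the construction below directly for a general $\Pi$, worsening/improving the two chosen coordinates in the direction dictated by $\Pi$.)

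So let $\Pi = (\{1,\dots,p\},\emptyset)$. First I would fix a base $R > \alpha$ and set $y^i \coloneqq (R^{i}, R^{-i}, 1,\dots,1) \in \Rpg$ for $i = 1,\dots,m+1$. For $a \neq b$ the point $y^a$ does not $\alpha$-approximate $y^b$: in one of the first two coordinates the value of $y^a$ exceeds that of $y^b$ by the factor $R^{\lvert a-b \rvert} \geq R > \alpha$. The key step is to perturb these points slightly so that, in addition, for every $j \in \{1,\dots,m\}$ the $m+1$ numbers $s_j(y^1),\dots,s_j(y^{m+1})$ are pairwise distinct, while retaining the (open) no-$\alpha$-approximation property and staying in $(\Rpg)^{m+1}$. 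This is possible because, for fixed $j$ and $a \neq b$, the map $\lambda \mapsto s_j(\lambda\cdot y^a)$ is strictly increasing (for $\lambda < \lambda'$ we have $\lambda\cdot y^a <_{\Pi} \lambda'\cdot y^a$ and $s_j$ is strictly $\Pi$-monotone), hence attains the value $s_j(y^b)$ for at most one $\lambda$; consequently the set of tuples in $(\Rpg)^{m+1}$ with $s_j(y^a)\neq s_j(y^b)$ is open and dense, and a finite intersection of such sets is still dense, so it meets the nonempty open set of tuples with the no-$\alpha$-approximation property.

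With the perturbed points fixed, define the instance $I = (X,f)$ by $X \coloneqq \{y^1,\dots,y^{m+1}\}$ and $f \coloneqq \mathrm{id}$; since $f(X)$ is finite, hence compact, and contained in $\Rpg$, this is a legitimate instance of a $p$-objective minimization problem. By the distinctness of values, each $s_j$ has a unique minimizer $y^{\pi(j)}$ over $X$, so the set of $S$-supported solutions is exactly $\{y^{\pi(1)},\dots,y^{\pi(m)}\}$, a set of at most $m < m+1 = \lvert X \rvert$ points (the $y^i$ are pairwise distinct, being pairwise non-$\alpha$-approximating). Hence there is an index $i^*$ with $y^{i^*} \notin \{y^{\pi(1)},\dots,y^{\pi(m)}\}$, and since no $y^i$ with $i \neq i^*$ $\alpha$-approximates $y^{i^*}$, no $S$-supported solution $\alpha$-approximates the feasible solution $y^{i^*}$; thus the set of $S$-supported solutions is not an $\alpha$-approximation set.

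The only nontrivial ingredient is the perturbation step: one must guarantee that each scalarizing function takes pairwise distinct values on the $m+1$ points, so that its optimizer among them is unique and the pigeonhole count goes through. This is precisely where continuity and strict $\Pi$-monotonicity of the $s_j$ are used; without them a scalarizing function could be constant on the whole point set and make every feasible solution $S$-supported. The remaining steps -- the explicit far-apart point set, the counting, and the reduction to the pure minimization case -- are routine.
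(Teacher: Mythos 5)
Your proof is correct, and its skeleton is the same as the paper's: build $\lvert S\rvert + 1$ feasible points that pairwise fail to $\alpha$-approximate each other by spreading two coordinates apart multiplicatively, arrange that every $s \in S$ takes pairwise distinct values on them so that each scalarizing function has a unique optimizer, conclude by pigeonhole that one solution is unsupported yet unapproximated, and invoke the transformation machinery of Section~\ref{sec:duality} to cover decompositions with maximization objectives. The differences are in implementation. First, you reduce an arbitrary $\Pi$ directly to pure $p$-objective minimization via $\Gamma = \MAX$, whereas the paper reduces to a \emph{biobjective} instance (padding the remaining objectives with the constant~$1$) and only transforms the three biobjective cases; both reductions are valid, yours is arguably cleaner since it avoids the restriction-and-embedding step. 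Second, you obtain the distinct-values property non-constructively, via an open-and-dense (genericity) argument using continuity and the injectivity of $\lambda \mapsto s_j(\lambda\cdot y^a)$, while the paper gives an explicit inductive construction with finitely many \enquote{decreasing steps} whose termination is argued from strict monotonicity alone; the paper's route is fully constructive, yours is shorter. One small slip: in your reduction you write $(S^{\Gamma})^{\Gamma} = S^{\Gamma}$, but by self-inverseness $(S^{\Gamma})^{\Gamma} = S$; the statement you actually use -- that by Lemma~\ref{lem:transformed-scalarization-optimal} the $S$-supported solutions in $\hat I^{\Gamma}$ are exactly the $S^{\Gamma}$-supported solutions in $\hat I$ -- is correct, so this is only a notational typo, not a gap.
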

\begin{proof}
    We first show that it suffices to construct an instance of a \emph{biobjective} optimization problem of each possible type such that the set of $S$-supported solutions is not an $\alpha$-approximation set. To this end, given an objective decomposition~$\Pi$ of $p>2$~objectives, we consider the objective decomposition~$\bar{\Pi}$ restricted to the objectives~$1$ and~$2$ given as $\bar{\Pi} \coloneqq (\emptyset,\{1,2\})$ if $k= 0$, $\bar{\Pi} \coloneqq (\{1\},\{2\})$ if $ k = 1$, and $\bar{\Pi} \coloneqq (\{1,2\},\emptyset)$ if $k\geq 2$. Now, let~$S$ be a scalarization of finite cardinality for~$\Pi$.
    Then, for each~$s \in S$, the function~$\bar{s} \colon \R^2_> \to \R$, $\bar{s}(y_1,y_2) \coloneqq s(y_1,y_2,1,\ldots,1)$ is a scalarizing function for~$\bar{\Pi}$. Applying the construction for $p = 2$ to the set $\bar{S} = \{\bar{s} \; \vert \; s\in S\}$ of scalarizing functions for $\bar{\Pi}$ then yields an instance of a biobjective optimization problem of type $\bar{\Pi}$ such that the set of $\bar{S}$-supported solutions is not an $\alpha$-approximation set; and this instance can be transformed into an instance of a $p$-objective optimization problem of type $\Pi$ such that the set of $S$-supported solutions is not an $\alpha$-approximation set by setting the additional $p-2$~objective functions to be equal to~$1$ for all~$x \in X$.
    
    \medskip
    
    It remains to show the claim for biobjective optimization problems, i.e., for $p=2$. To this end, we first consider the case $\bar{\Pi} = (\{1,2\},\emptyset)$, i.e., the case where both objective functions are to be minimized. Let~$S$ be a finite set of scalarizing functions for $\bar{\Pi}$. In the following, we construct an instance~$I = (X,f)$ of a biobjective minimization problem of type~$\bar{\Pi}$ whose feasible set~$X$ consists of $\vert S \rvert + 1$ solutions such that 
    \begin{enumerate}
    	\item\label{thm:finite-scalarizations-property1} no solution~$x \in X$ is $\alpha$-approximated by any other solution $x' \in X\setminus\{x\}$, and
    	\item\label{thm:finite-scalarizations-property2} we have $s(f(x)) \neq s(f(x'))$ for all~$x ,x' \in X$ with $x \neq x'$ and each~$s \in S$.
    \end{enumerate}
    We set $X = \{x^{(0)}, \ldots, x^{(\lvert S \rvert)}\}$ and inductively determine the components of the vectors~$f(x^{(\ell)})$ for $\ell = 0,\ldots, \vert S \rvert$ as follows: We start by setting $f_1(x^{(0)}) \coloneqq f_2(x^{(0)}) \coloneqq 1$. Next, let $f(x^{(0)}), \ldots, f(x^{(\ell-1)})$ be given for some $\ell \in \{1,\ldots, \vert S \rvert\}$. We construct the vector~$f(x^{(\ell)})$ such that~$x^{(\ell)}$ does not $\alpha$-approximate~$x^{(m)}$ and is not $\alpha$-approximated by~$x^{(m)}$ for~$m = 0,\ldots, \ell-1$, and such that~$s(f(x^{(\ell)})) \neq s(f(x^{(m)}))$ for $m = 0,\ldots, \ell-1$ and each $s \in S$.
    To this end, we first set
    \begin{align*}
        f_1(x^{(\ell)}) &\coloneqq \frac 1 {\alpha+1} \cdot \min\left\{f_1(x^{(0)}), \ldots, f_1(x^{(\ell-1)})\right\}\\
        f_2(x^{(\ell)}) &\coloneqq (\alpha + 1) \cdot \max\left\{f_2(x^{(0)}), \ldots, f_2(x^{(\ell-1)})\right\} + \vert S \rvert^2.
    \end{align*}
    If $s(f(x^{(\ell)})) \neq s(f(x^{(m)}))$ for $m = 0,\ldots, \ell-1$ and each $s \in S$, we are done. Otherwise, we do a decreasing step as follows: We strictly decrease the value~$f_1(x^{(\ell)})$ by a factor of~$\frac 1 2$ and strictly decrease the value of $f_2(x^{(\ell)})$ by an additive constant of~$1$. Note that, by strict monotonicity, this strictly decreases the value~$s(f(x^{(\ell)}))$ for each~$s \in S$. Thus, for each~$m \in \{0, \ldots, \ell-1\}$ and~$s \in S$ where we previously had $s(f(x^{(\ell)})) = s(f(x^{(m)}))$, we now have $s(f(x^{(\ell)})) < s(f(x^{(m)}))$. Note that this strict inequality is preserved in subsequent decreasing steps. Hence, after at most~$\ell \cdot \vert S \rvert$ many decreasing steps, we must have $s(f(x^{(\ell)})) \neq s(f(x^{(m)}))$ for $m = 0,\ldots, \ell-1$ and each $s \in S$, so we can proceed with the construction of~$f(x^{(\ell+1)})$ in iteration~$\ell + 1$.
    
    It is now left to prove that the resulting instance satisfies the two claimed Properties~\ref{thm:finite-scalarizations-property1} and \ref{thm:finite-scalarizations-property2}. The solution~$x^{(\ell)}$ whose objective values have been constructed in iteration~$\ell$ is not $\alpha$-approximated by~$x^{(m)}$ for~$m = 0,\ldots, \ell-1$ in the first objective~$f_1$ since
    \begin{align*}
        f_1(x^{(\ell)}) &\leq \frac 1 {\alpha+1} \cdot \min\left\{f_1(x^{(0)}), \ldots, f_1(x^{(\ell-1)}) \right\}\\ &< \frac 1 \alpha \cdot \min\left\{f_1(x^{(0)}), \ldots, f_1(x^{(\ell-1)})\right\}.
    \end{align*}
    Further, the solution~$x^{(\ell)}$ does not $\alpha$-approximate~$x^{(m)}$ in the second objective~$f_2$ for~$m = 0,\ldots, \ell-1$: We have performed at most $\ell \cdot \vert S \rvert \leq \vert S \rvert^2$ many decreasing steps, where, in each decreasing step, the value~$f_2(x^{(\ell)})$ has been decreased by~$1$. Thus,
    \begin{align*}
        f_2(x^{(\ell)}) &\geq (\alpha + 1) \cdot \max\left\{f_2(x^{(0)}), \ldots, f_2(x^{(\ell-1)})\right\} + \vert S \rvert^2 - \ell \cdot \vert S \rvert\\
        &>  \alpha \cdot \max\left\{f_2(x^{(0)}), \ldots, f_2(x^{(\ell-1)})\right\}.
    \end{align*}
    Hence, the instance~$I = (X,f)$ constructed as above indeed satisfies the two claimed Properties~\ref{thm:finite-scalarizations-property1} and \ref{thm:finite-scalarizations-property2}. Property~\ref{thm:finite-scalarizations-property2} implies that, for each scalarizing function~$s \in S$, exactly one solution is optimal for~$s$. Thus, at most~$\vert S \rvert$ many solutions can be $S$-supported, and at least one solution~$x \in X$ is not $S$-supported. However, by Property~\ref{thm:finite-scalarizations-property1}, this solution~$x$ is not $\alpha$-approximated by any other solution. Thus, $I$ is an instance of a biobjective minimization problem for which the set of $S$-supported solutions is not an $\alpha$-approximation set.
    
    \medskip
    
    In order to show the claim for the case $\bar{\Pi} = (\emptyset, \{1,2\})$, i.e., the case where both objective functions are to be maximized, we apply the above construction to the ${\{1,2\}}$-transformed scalarization~$S^{\{1,2\}}$. This yields an instance~$I$ of a biobjective minimization problem where the set of $S^{{\{1,2\}}}$-supported solutions is not an $\alpha$-approximation set. Thus, by Corollary~\ref{cor:transformed-supported-instance}, the set of $S$-supported solutions is not an $\alpha$-approximation set in the ${\{1,2\}}$-transformed instance~$I^{{\{1,2\}}}$, which is an instance of a biobjective maximization problem. The case $\bar{\Pi} = (\{1\}, \{2\})$ follows analogously with the transformation induced by~$\Gamma = \{2\}$.
\end{proof}
Note that the (algorithmically motivated) approximation for instances of $p$-objective minimization problems in~\cite{Glasser+etal:multi-hardness-proceedings,Glasser+etal:multi-hardness,Bazgan+etal:power-weighted-sum} is done by an instance-based choice of finitely many scalarizing functions. Nevertheless, to obtain a scalarization that yields approximation sets for arbitrary instances of arbitrary $p$-objective minimization problems, the cardinality of~$S$ must be infinite by Theorem~\ref{thm:finite-scalarizations}.
However, the inapproximability results for maximization problems presented in~\cite{Bazgan+etal:power-weighted-sum,Helfrich+etal:cones} state that there exists an instance where even the set of all supported solutions (for the weighted sum scalarization) does not constitute an approximation set. Hence,
in general, even considering infinitely many scalarizing functions is not sufficient for approximation. Instead, additional conditions for the scalarizing functions are crucial, which we derive next.

We first study with what approximation quality a given feasible solution can be approximated by optimal solutions for a single scalarizing function. Afterwards, we investigate what approximation quality can be achieved by every optimal solution set for a scalarization~$S$, and then derive conditions under which an optimal solution set for $S$ constitutes an approximation for arbitrary instances of $p$-objective optimization problems of type~$\Pi$.

\medskip

The first result shows that, given a feasible solution~$x'$, if the component-wise maximum ratio between points in the level set of a scalarizing function at~$f(x')$ can be bounded by some $\alpha \geq 1$, then~$x'$ is $\alpha$-approximated by every optimal solution for the scalarizing function:

\begin{lemma}\label{lem:two-solutions-sufficient}
	In an instance of a $p$-objective optimization problem of type~$\Pi$, let~$x' \in X$ and $y' \coloneqq f(x')$. Let $s\colon \Rpg \to \R$ be a scalarizing function for $\Pi$ such that the level set~$L(y',s)$
	is bounded from above in $i = 1, \ldots, k$  by some $q \in \Rpg$ and bounded from below in $i = k+1, \ldots,p$ by some $q' \in \Rpg$. Then the solution~$x'$ is $\alpha$-approximated by every solution~$x \in X$ that is optimal for $s$, where
	\begin{align}
	\alpha \coloneqq \sup\left\{\max \left\{ \frac{y_1}{y'_1}, \ldots, \frac{y_k}{y'_k}, \frac{y'_{k+1}}{y_{k+1}},\ldots,\frac{y'_{p}}{y_{p}} \right\} \; \bigg\vert \; y \in L(y',s)
	\right\}.\label{eq:alphadef-onepoint}
	\end{align}
\end{lemma}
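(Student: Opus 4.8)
The plan is to fix an arbitrary solution $x \in X$ that is optimal for $s$, write $y \coloneqq f(x)$, and use the optimality inequality $s(y) \le s(y')$ to locate a point of the level set $L(y',s)$ that sandwiches $y$ against the bounds defining $\alpha$.

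First I would record that $\alpha$ is a finite real number with $\alpha \ge 1$. Finiteness is precisely where the boundedness hypotheses on $L(y',s)$ are used: every $y \in L(y',s)$ satisfies $y_i \le q_i$ for $i \le k$ and $y_i \ge q'_i$ for $i > k$, so each ratio appearing in the definition of $\alpha$ is bounded uniformly over $L(y',s)$; and $\alpha \ge 1$ holds because $y' \in L(y',s)$ contributes the value $1$ to the supremum. Next I would invoke Lemma~\ref{lem:beam}, applied to the points $f(x)$ and $y'$ (in the roles of $q$ and $y$ there): this produces $\mu \in \R_>$ such that the point $\tilde y \in \Rpg$ defined by $\tilde y_i \coloneqq \mu\, y_i$ for $i \in \MIN$ and $\tilde y_i \coloneqq \tfrac{1}{\mu}\, y_i$ for $i \in \MAX$ satisfies $s(\tilde y) = s(y')$, i.e.\ $\tilde y \in L(y',s)$.

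The crucial step is to pin down the direction of this scaling, i.e.\ to show $\mu \ge 1$. If instead $\mu < 1$, then $\tilde y_i < y_i$ for $i \in \MIN$ and $\tilde y_i > y_i$ for $i \in \MAX$, that is, $\tilde y <_{\Pi} y$; strict $\Pi$-monotonicity of $s$ then gives $s(\tilde y) < s(y)$, which together with $s(\tilde y) = s(y')$ and $s(y) \le s(y')$ yields the contradiction $s(y') < s(y')$. Hence $\mu \ge 1$, so $y_i \le \tilde y_i$ for $i \in \MIN$ and $y_i \ge \tilde y_i$ for $i \in \MAX$. Since $\tilde y \in L(y',s)$, the definition of $\alpha$ forces $\tilde y_i \le \alpha\, y'_i$ for $i \in \MIN$ and $\tilde y_i \ge \tfrac{1}{\alpha}\, y'_i$ for $i \in \MAX$. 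Concatenating the two chains of inequalities gives $f_i(x) = y_i \le \tilde y_i \le \alpha\, y'_i = \alpha\, f_i(x')$ for $i \in \MIN$ and $f_i(x) = y_i \ge \tilde y_i \ge \tfrac{1}{\alpha}\, y'_i = \tfrac{1}{\alpha}\, f_i(x')$ for $i \in \MAX$, which is exactly the statement that $x$ $\alpha$-approximates $x'$ (Definition~\ref{def:approximation}).

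I expect the only real difficulty to be conceptual rather than technical: one must realize that it is the image $f(x)$ of the optimal solution --- and not $f(x')$ --- that should be rescaled onto the level set through $y'$, since it is optimality, $s(f(x)) \le s(f(x'))$, that forces the scaling factor to point outward ($\mu \ge 1$) and thereby places $f(x)$ below the level-set point $\tilde y$ in the minimization coordinates and above it in the maximization coordinates. Once that observation is made, the remainder is routine manipulation of the component-wise order $<_{\Pi}$ and of positive ratios.
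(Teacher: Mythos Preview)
Your proof is correct and follows essentially the same route as the paper's: invoke Lemma~\ref{lem:beam} to rescale $f(x)$ onto $L(y',s)$, use optimality of $x$ together with strict $\Pi$-monotonicity to force the scaling factor to be at least~$1$, and then read off the $\alpha$-approximation from the definition of~$\alpha$. The paper's argument is identical up to notation (it writes $\lambda$ and $y''$ where you write $\mu$ and $\tilde y$), and your additional remark that $\alpha\geq 1$ because $y'\in L(y',s)$ is harmless extra detail.
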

\begin{proof}
	Note that $\alpha < \infty$ since $y'=f(x')$ is fixed and $L(y',s)$ is bounded from \textcolor{black}{above} in $i =1,\ldots,k$ by $q \in \Rpg$ and bounded from \textcolor{black}{below} in $i = k+1, \ldots,p$ by $q' \in \Rpg$. Let~$x$ be an optimal solution for~$s$. 
	By Lemma~\ref{lem:beam}, there exists~$\lambda\in\R_>$ such that $y'' \in \Rpg$ defined by $y''_i \coloneqq \lambda \cdot f_i(x)$ for $i = 1, \ldots, k$ and $y''_i \coloneqq \frac{1}{\lambda} \cdot f_i(x)$ for $i =k+1,\ldots,p$ satisfies $s(y'') = s(y')$, i.e., $y'' \in L(y',s)$.  Moreover, $\lambda < 1$ would imply that $s(y'') < s(f(x)) \leq s(f(x')) = s(y') = s(y'')$ by strict monotonicity of~$s$ and optimality of~$x$ for~$s$, which is a contradiction. Hence, $\lambda\geq1$, so $f(x) \leq_{\Pi} y''$, and we obtain
	\begin{align*}
    	\frac{f_i(x)}{f_i(x')} \leq \frac{y''_i}{f_i(x')} = \frac{y''_i}{y'_i} \leq \max \left\{ \frac{y''_1}{y'_1}, \ldots, \frac{y''_k}{y'_k}, \frac{y'_{k+1}}{y''_{k+1}},\ldots,\frac{y'_{p}}{y''_{p}} \right\} \leq \alpha
	\end{align*}
	for~$i =1,\ldots,k$ and
	\begin{align*}
    	\frac{f_i(x')}{f_i(x)} \leq \frac{f_i(x')}{y''_i} = \frac{y'_i}{y''_i} \leq \max \left\{ \frac{y''_1}{y'_1}, \ldots, \frac{y''_k}{y'_k}, \frac{y'_{k+1}}{y''_{k+1}},\ldots,\frac{y'_{p}}{y''_{p}} \right\} \leq \alpha
	\end{align*}
	for $i = k+1, \ldots, p$.
\end{proof}

\medskip

We proceed by investigating with what approximation quality a given feasible solution can be approximated by the set of $S$-supported solutions of a scalarization~$S$.
\begin{proposition}\label{prop:scalarizationset-necessary}
	In an instance of a $p$-objective optimization problem of type~$\Pi$, let~$x' \in X$ be given. Let $S$ be a scalarization for $\Pi$ such that, for some scalarizing function~$\bar{s} \in S$, the level set~$L(y',\bar{s})$
	for $y' \coloneqq f(x')$ is bounded from above in $i=1,\ldots,k$ by some $q \in \Rpg$ and bounded from below in $i=k+1,\ldots,p$ by some $q' \in \Rpg$.
	Then, for any $\varepsilon > 0$, the solution~$x'$ is $(\alpha+ \varepsilon)$-approximated by every optimal solution for some scalarizing function~$s \in S$, where
	\begin{align*}
    	\alpha \coloneqq \inf_{s \in S}\quad \sup\left\{\max \left\{ \frac{y_1}{y'_1}, \ldots, \frac{y_k}{y'_k}, \frac{y'_{k+1}}{y_{k+1}},\ldots,\frac{y'_{p}}{y_{p}} \right\} \; \bigg\vert \; y \in L(y',s)\right\}.
	\end{align*}
	If the infimum is attained at some~$s\in S$, then~$x'$ is $\alpha$-approximated by every optimal solution for~$s$.
\end{proposition}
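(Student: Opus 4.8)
The plan is to reduce the statement to Lemma~\ref{lem:two-solutions-sufficient} applied to a well-chosen scalarizing function from~$S$. For each $s \in S$, abbreviate
\begin{align*}
\alpha_s \coloneqq \sup\left\{\max \left\{ \tfrac{y_1}{y'_1}, \ldots, \tfrac{y_k}{y'_k}, \tfrac{y'_{k+1}}{y_{k+1}},\ldots,\tfrac{y'_{p}}{y_{p}} \right\} \; \Big\vert \; y \in L(y',s)\right\} \in [1,\infty],
\end{align*}
so that $\alpha = \inf_{s \in S}\alpha_s$ (the lower bound $1$ comes from plugging $y = y' \in L(y',s)$). The first observation is that $\alpha < \infty$: by hypothesis $L(y',\bar s)$ is bounded from above in $i = 1,\ldots,k$ by~$q$ and from below in $i = k+1,\ldots,p$ by~$q'$, whence $\alpha_{\bar s} \le \max\{q_1/y'_1,\ldots,q_k/y'_k,\, y'_{k+1}/q'_{k+1},\ldots,y'_p/q'_p\} < \infty$ and therefore $\alpha \le \alpha_{\bar s} < \infty$.

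The second step is to record the converse implication that makes Lemma~\ref{lem:two-solutions-sufficient} applicable to scalarizing functions other than~$\bar s$: whenever $\alpha_s < \infty$ for some $s \in S$, every $y \in L(y',s)$ satisfies $y_i \le \alpha_s\cdot y'_i$ for $i = 1,\ldots,k$ and $y_i \ge \tfrac{1}{\alpha_s}\cdot y'_i$ for $i = k+1,\ldots,p$; hence $L(y',s)$ is bounded from above in $i = 1,\ldots,k$ (by $\alpha_s\cdot y' \in \Rpg$) and bounded from below in $i = k+1,\ldots,p$ (by $\tfrac{1}{\alpha_s}\cdot y' \in \Rpg$). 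So the boundedness hypothesis of Lemma~\ref{lem:two-solutions-sufficient} is met for every such~$s$, and that lemma yields that $x'$ is $\alpha_s$-approximated by every solution that is optimal for~$s$.

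Combining these, fix $\varepsilon > 0$. Since $\alpha = \inf_{s\in S}\alpha_s$ is finite, there is an $s \in S$ with $\alpha_s < \alpha + \varepsilon$, in particular $\alpha_s < \infty$, so by the previous paragraph $x'$ is $\alpha_s$-approximated by every optimal solution for~$s$. As $1 \le \alpha_s \le \alpha + \varepsilon$ and $f(x') \in \Rpg$, the inequalities $f_i(x) \le \alpha_s f_i(x') \le (\alpha+\varepsilon)f_i(x')$ for $i \in \MIN$ and $f_i(x) \ge \tfrac{1}{\alpha_s}f_i(x') \ge \tfrac{1}{\alpha+\varepsilon}f_i(x')$ for $i \in \MAX$ show that $x'$ is even $(\alpha+\varepsilon)$-approximated by every optimal solution for~$s$, which is the first claim. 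For the addendum, if the infimum defining~$\alpha$ is attained at some $s \in S$, then $\alpha_s = \alpha < \infty$, and applying the second step to this~$s$ gives directly that $x'$ is $\alpha$-approximated by every optimal solution for~$s$.

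I do not expect a real obstacle here: the only point requiring care is the equivalence between finiteness of the supremum $\alpha_s$ and the coordinate-wise boundedness of $L(y',s)$ demanded by Lemma~\ref{lem:two-solutions-sufficient} — this is exactly what lets the boundedness assumption made only for~$\bar s$ transfer to an $s$ with $\alpha_s$ close to the infimum, and it also guarantees (via Assumption~\ref{ass:compact} and continuity of~$s$) that optimal solutions for such~$s$ exist so that the statement is not vacuous.
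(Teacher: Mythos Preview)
Your proof is correct and follows essentially the same approach as the paper: pick an $s \in S$ with $\alpha_s$ close to the infimum, observe that finiteness of $\alpha_s$ forces the coordinate-wise boundedness of $L(y',s)$ required by Lemma~\ref{lem:two-solutions-sufficient}, and apply that lemma. If anything, you are a bit more explicit than the paper in spelling out the bounds $\alpha_s \cdot y'$ and $\tfrac{1}{\alpha_s}\cdot y'$ and in noting that $\alpha_s$-approximation implies $(\alpha+\varepsilon)$-approximation by monotonicity of the approximation factor.
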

\begin{proof} 
    Note that $\alpha < \infty$ since $y'=f(x')$ is fixed and $L(y',\bar{s})$ is bounded from above in $i=1,\ldots,k$ by some $\bar{q} \in \Rpg$ and bounded from below in $i=k+1,\ldots,p$ by some $\bar{q}' \in \Rpg$.
	Given $\varepsilon> 0$, let $s \in S$ be a scalarizing function such that
	\begin{align*}
    	\sup\left\{\max \left\{ \frac{y_1}{y'_1}, \ldots, \frac{y_k}{y'_k}, \frac{y'_{k+1}}{y_{k+1}},\ldots,\frac{y'_{p}}{y_{p}} \right\} \; \bigg\vert \; y \in L(y',s)\right\} \leq \alpha + \varepsilon.
	\end{align*}
	Then $L(y',s)$ must be bounded from above in $i=1,\ldots,k$ by some $q \in \Rpg$ and bounded from below in $i=k+1,\ldots,p$ by some $q' \in \Rpg$. Thus, Lemma~\ref{lem:two-solutions-sufficient} implies that $x'$ is $(\alpha + \varepsilon)$-approximated by any solution~$x \in X$ that is optimal for~$s$, which proves the first claim. If the infimum is attained at~$s\in S$, this means that we even have
	\begin{align*}
    	\sup\left\{\max \left\{ \frac{y_1}{y'_1}, \ldots, \frac{y_k}{y'_k}, \frac{y'_{k+1}}{y_{k+1}},\ldots,\frac{y'_{p}}{y_{p}} \right\} \; \bigg\vert \; y \in L(y',s)\right\} = \alpha,
	\end{align*} 
	and the second claim also follows immediately by using Lemma~\ref{lem:two-solutions-sufficient}.
\end{proof}\noindent
If the scalarization~$S$ admits a common finite upper bound on $$\displaystyle\inf_{s \in S}\quad \sup\left\{\max \left\{ \frac{y_1}{y'_1}, \ldots, \frac{y_k}{y'_k}, \frac{y'_{k+1}}{y_{k+1}},\ldots,\frac{y'_{p}}{y_{p}} \right\} \; \bigg\vert \; y \in L(y',s)\right\}$$ for all points~$y' \in \Rpg$, then Proposition~\ref{prop:scalarizationset-necessary} implies that, in each instance of each $p$-objective optimization problem of type~$\Pi$, every optimal solution set for~$S$ yields a constant approximation quality:
\begin{theorem}\label{cor:sup-inf-sup}
	Let~$S$ be a scalarization for $\Pi$ and let
	\begin{align*}
    	\alpha \coloneqq \sup_{y' \in \Rpg} \quad \inf_{s \in S}\quad \sup\left\{\max \left\{ \frac{y_1}{y'_1}, \ldots, \frac{y_k}{y'_k}, \frac{y'_{k+1}}{y_{k+1}},\ldots,\frac{y'_{p}}{y_{p}} \right\} \; \bigg\vert \; y \in L(y',s)\right\}.
	\end{align*}
	If $\alpha < \infty$, then, in each instance of each $p$-objective optimization problem of type~$\Pi$, every optimal solution set for $S$ is an~$(\alpha + \varepsilon)$-approximation set for any $\varepsilon > 0$. If, additionally, the infimum
	\begin{align*}
    	\inf_{s \in S}\quad \sup\left\{\max \left\{ \frac{y_1}{y'_1}, \ldots, \frac{y_k}{y'_k}, \frac{y'_{k+1}}{y_{k+1}},\ldots,\frac{y'_{p}}{y_{p}} \right\} \; \bigg\vert \; y \in L(y',s)\right\} 
	\end{align*}
	is attained and finite for each $y' \in \Rpg$, then, in each instance of each $p$-objective optimization problem of type~$\Pi$, every optimal solution set for $S$ is an~$\alpha$-approximation set.
\end{theorem}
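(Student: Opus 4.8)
The plan is to deduce the statement directly from Proposition~\ref{prop:scalarizationset-necessary}, applied pointwise. Fix an instance $I = (X,f)$ of a $p$-objective optimization problem of type~$\Pi$ and an arbitrary optimal solution set~$P \subseteq X$ for~$S$. To prove $P$ is an approximation set with the claimed quality, I take an arbitrary $x' \in X$, set $y' \coloneqq f(x')$, and aim to exhibit a single $x \in P$ that approximates $x'$ within the desired factor; since $x'$ will be arbitrary, this suffices.

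The first real step is to check that the hypothesis of Proposition~\ref{prop:scalarizationset-necessary} holds at $y'$. Since $\alpha < \infty$, the definition of $\alpha$ as a supremum over all points in $\Rpg$ gives that the inner quantity $\alpha_{y'} \coloneqq \inf_{s \in S} \sup\{\max\{\frac{y_1}{y'_1},\ldots,\frac{y_k}{y'_k},\frac{y'_{k+1}}{y_{k+1}},\ldots,\frac{y'_p}{y_p}\} \mid y \in L(y',s)\}$ satisfies $\alpha_{y'} \le \alpha < \infty$. Hence there exists some $\bar{s} \in S$ for which the innermost supremum is finite, say equal to $M$ (note $M \ge 1$, since $y' \in L(y',\bar{s})$ itself contributes the value~$1$). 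Finiteness of this supremum forces $y_i \le M\,y'_i$ for all $y \in L(y',\bar{s})$ and $i = 1,\ldots,k$, and $y_i \ge \frac{1}{M}\,y'_i$ for $i = k+1,\ldots,p$; thus $L(y',\bar{s})$ is bounded from above in $i = 1,\ldots,k$ by $M\cdot y' \in \Rpg$ and bounded from below in $i = k+1,\ldots,p$ by $\frac{1}{M}\cdot y' \in \Rpg$. So Proposition~\ref{prop:scalarizationset-necessary} applies at $y'$.

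Now I invoke Proposition~\ref{prop:scalarizationset-necessary}. For any $\varepsilon > 0$ it yields a scalarizing function $s \in S$ such that every optimal solution for $s$ is $(\alpha_{y'} + \varepsilon)$-approximated by $x'$ -- equivalently $(\alpha_{y'}+\varepsilon)$-approximates $x'$ -- hence also $(\alpha + \varepsilon)$-approximates $x'$, because $\alpha_{y'} \le \alpha$ and the approximation relation is monotone in its factor (if $x$ $\beta$-approximates $x'$ and $\beta' \ge \beta$, then $f_i(x) \le \beta f_i(x') \le \beta' f_i(x')$ for $i \in \MIN$ and $f_i(x) \ge \frac{1}{\beta} f_i(x') \ge \frac{1}{\beta'} f_i(x')$ for $i \in \MAX$). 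Since $P$ is an optimal solution set for $S$, it contains a solution $x$ that is optimal for this $s$, and this $x$ $(\alpha+\varepsilon)$-approximates $x'$; as $x'$ was arbitrary, $P$ is an $(\alpha+\varepsilon)$-approximation set, proving the first assertion. For the second assertion, if the inner infimum is attained and finite at every $y' \in \Rpg$, then the ``attained'' part of Proposition~\ref{prop:scalarizationset-necessary} gives, for our $x'$, a scalarizing function $s \in S$ all of whose optimal solutions $\alpha_{y'}$-approximate $x'$; again $\alpha_{y'} \le \alpha$, so they $\alpha$-approximate $x'$, and picking $x \in P$ optimal for $s$ as before shows that $P$ is an $\alpha$-approximation set.

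I do not anticipate a substantial obstacle: the argument is essentially bookkeeping around Proposition~\ref{prop:scalarizationset-necessary}. The only points needing a moment of care are (i)~extracting from $\alpha < \infty$ a single $\bar{s} \in S$ whose level set at $y'$ is bounded in the relevant coordinates, so that Proposition~\ref{prop:scalarizationset-necessary} genuinely applies at $y'$, and (ii)~passing from the pointwise bound $\alpha_{y'}$ to the uniform bound $\alpha$, which only uses $\alpha_{y'} \le \alpha$ together with the monotonicity of the approximation relation in its factor.
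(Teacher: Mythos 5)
Your proposal is correct and follows essentially the same route as the paper: fix an instance and a feasible solution $x'$, observe that $\alpha < \infty$ forces the inner infimum at $y' = f(x')$ to be finite so that some level set $L(y',\bar{s})$ is bounded in the relevant coordinates, and then apply Proposition~\ref{prop:scalarizationset-necessary} (in both its $\varepsilon$- and attained versions) together with $\alpha_{y'} \le \alpha$ and the defining property of an optimal solution set. The only blemish is a wording slip ("every optimal solution for $s$ is approximated by $x'$ -- equivalently approximates $x'$": these phrasings are not equivalent), but the conclusion you actually use -- that the solution in $P$ optimal for $s$ approximates $x'$ -- is the correct reading of the proposition, so the argument stands.
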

\begin{proof}
    Let~$I = (X,f)$ be an instance of a $p$-objective optimization problem of type~$\Pi$. Let~$x' \in X$ be a feasible solution and set~$y'\coloneqq f(x')$. Then
    \begin{align*}
        \inf_{s \in S}\quad \sup\left\{\max \left\{ \frac{y_1}{y'_1}, \ldots, \frac{y_k}{y'_k}, \frac{y'_{k+1}}{y_{k+1}},\ldots,\frac{y'_{p}}{y_{p}} \right\} \; \bigg\vert \; y \in L(y',s)\right\} \leq  \alpha,
    \end{align*}
    which implies that $L(y',\bar{s})$ must be bounded from above in $i=1,\ldots, k$ by some $\bar{q} \in \Rpg$ and bounded from below in $i=k+1,\ldots p$ by some $\bar{q}' \in \Rpg$ for at least one~$\bar{s}\in S$. Consequently, the first claim follows by Proposition~\ref{prop:scalarizationset-necessary}. The second claim follows similarly by using the second statement in Proposition~\ref{prop:scalarizationset-necessary}.
\end{proof}\noindent
Given a scalarization~$S$ for $\Pi$ and $y' \in \Rpg$, set 
\begin{align*}
    \alpha(y') \coloneqq \inf_{s \in S}\quad \sup\left\{\max \left\{ \frac{y_1}{y'_1}, \ldots, \frac{y_k}{y'_k}, \frac{y'_{k+1}}{y_{k+1}},\ldots,\frac{y'_{p}}{y_{p}} \right\} \; \bigg\vert \; y \in L(y',s)\right\}.
\end{align*}
Theorem~\ref{cor:sup-inf-sup} states that, if there exists a common finite upper bound~$\alpha$ with $\sup_{y' \in \Rpg} \alpha(y) \leq \alpha < \infty$, then a constant approximation quality (namely $\alpha+\varepsilon$) is achieved by every optimal solution set for $S$ in arbitrary instances of arbitrary $p$-objective optimization problems of type~$\Pi$. The following example, however, shows that the weaker condition $\alpha(y')<\infty$ for every~$y'\in\Rpg$ (which holds if all level sets~$L(y',s)$ are bounded from above in $i=1,\ldots, k$ by some $\bar{q} \in \Rpg$ and bounded from below in $i=k+1,\ldots p$ by some $\bar{q}' \in \Rpg$) is \emph{not} sufficient in order to guarantee a constant approximation quality:
\begin{example}
     Let $p = 2$ and $\Pi^{\min} = (\{1,2\},\emptyset)$. Consider the scalarization~$S = \{s\}$ for $\Pi^{\min}$, where $s\colon\R^2_>\rightarrow \R$ is defined by $s(y) \coloneqq \min \{y_1^2 + y_2, y_1+ y_2^2\}$. Then, Theorem~\ref{thm:finite-scalarizations} shows that there exists an instance of a biobjective minimization problem with a solution~$x' \in X$ such that $x'$ is not $\alpha$-approximated by any $S$-supported solution. Nevertheless, for any $y' \in \R^2_>$, it can be shown that
    \begin{align*}
        \alpha(y') &= \sup\left\{\max\left\{\frac{y_1}{y'_1},\frac{y_2}{y'_2}\right\} \; \bigg\vert \; y \in L(y',s)\right\} \\
        &\leq \max \left\{\frac{s(y')}{y'_1},\frac{\sqrt{s(y')}}{y'_1},\frac{s(y')}{y'_2},\frac{\sqrt{s(y')}}{y'_2}\right\} < \infty.
    \end{align*}
    Further,
    \begin{align*}
        \sup_{y' \in \Rpg} \alpha(y') &\geq \sup_{a \geq 1} \quad \sup\left\{ \max\left\{\frac{y_1}{a},\frac{y_2}{a}\right\} \; \bigg\vert \; y \in \R^2_>, \; s(y) = s((a,a))\right\} \\
        &= \sup_{a \geq 1} \quad \sup\left\{ \max\left\{\frac{y_1}{a},\frac{y_2}{a} \right\} \; \bigg\vert \; y \in \R^2_>, \; s(y) = a^2 + a\right\}\\
        &= \sup_{a \geq 1}\frac{a^2 + a}{a} =  \sup_{a \geq 1} a+ 1 = \infty,
    \end{align*}
    so there is no $\alpha < \infty$ with $\sup_{y' \in \Rpg} \alpha(y') \leq \alpha$ as required in Theorem~\ref{cor:sup-inf-sup}.
\end{example}
There exist scalarizations~$S$ for which the approximation quality $\alpha$ given in Theorem~\ref{cor:sup-inf-sup} is tight in the sense that, for any $\varepsilon > 0$, there is an instance of a multiobjective optimization problem of type~$\Pi$ such that the set of $S$-supported solutions is not an $(\alpha \cdot (1 - \varepsilon))$-approximation set. Examples of such scalarizations where, additionally, $\alpha$ is easy to calculate, are presented in Section~\ref{sec:tightness-norm-based-scalarizations}.

Nevertheless, we now show that the approximation quality in Theorem~\ref{cor:sup-inf-sup} is not tight in general. To this end, we provide an example of a scalarization~$S$ for minimization for which each individual scalarizing function~$s \in S$ does not satisfy the requirements of Lemma~\ref{lem:two-solutions-sufficient}. That is, for each point~$y' \in \Rpg$, the level set~$L(y',s)$ is not bounded from above in some~$i=1,\ldots,p$ .
However, for each instance, every optimal solution set for the whole scalarization~$S$ is indeed a $1$-approximation set.
\begin{example}\label{ex:approx-not-tight-in-general}
    Again, let $p=2$ and $\Pi^{\min} = (\{1,2\},\emptyset)$.
	For each $w \in \R^2_>$ and $\varepsilon \in (0,1)$, define a scalarizing function~$s_{w,\varepsilon}\colon \R_>^2 \to \R$ for $\Pi^{\min}$ via
	\begin{align*}
	s_{w,\varepsilon}(y) \coloneqq \max\Bigg\{ \min \left\{ \frac{w_1\cdot y_1}{\varepsilon}, w_2 \cdot y_2 \right\} ,\min \left\{w_1 \cdot y_1, \frac{w_2\cdot y_2}{\varepsilon} \right\}\Bigg\}.
	\end{align*}
	Then, the level set $L(y',s_{w,\varepsilon})=\left\{y \in \R^2_> \, \vert \, s_{w,\varepsilon}(y) = s_{w,\varepsilon}(y')\right\}$ is unbounded for each $y' \in \R^2_>$, and consequently not bounded from above in neither $i=1$ nor $i=2$. 
	Thus, for each $s_{w,\varepsilon}$ and each $y' \in \R_>$, it holds that
	\begin{align*}
	    \sup \left\{ \max \left\{ \frac{y_1}{y'_1}, \frac{y_2}{y'_2} \right\} \; \bigg\vert \; y \in L(y',s_{w,\varepsilon})\right\} = \infty
	\end{align*}
	and, therefore, the value~$\alpha$ given in Theorem~\ref{cor:sup-inf-sup} is infinite.
	However, for $S = \{s_{w,\varepsilon} : w \in \R^2_> \; \vert \; 0<\varepsilon<1\}$, in any instance~$I = (X,f)$ of any biobjective minimization problem, at least one corresponding efficient solution~$x \in X_E$ for every nondominated image $y \in Y_N$ must be contained in every optimal solution set for~$S$ and, consequently, every optimal solution set is a $1$-approximation set:
	Since~$f(X)$ is a compact subset of~$\R^2_>$, it is bounded from above in each~$i$ by some~$y \in \R^2_>$, and from below in each~$i$ by some~$y' \in \R^2_>$. Choose~$\varepsilon < \frac{y'_1 \cdot y'_2}{y_1 \cdot y_2}$. Then, for each $w \in \R^2_>$ with~$y'_2 \leq w_1 \leq y_2$ and $y'_1 \leq w_2 \leq y_1$, and each $x\in X$, we have
	\begin{align*}
    	&w_1 \cdot f_1(x) \leq y_2 \cdot y_1 < \frac{y'_1 \cdot y'_2}{\varepsilon} \leq \frac{w_2 \cdot f_2(x)}{\varepsilon} \quad\text{ and }\\
    	&w_2 \cdot f_2(x) \leq y_1 \cdot y_2 < \frac{y'_2 \cdot y'_1}{\varepsilon} \leq \frac{w_1 \cdot f_1(x)}{\varepsilon},
	\end{align*}
	so $s_{w,\varepsilon}(f(x)) = \max\{w_1 \cdot f_1(x),w_2\cdot f_2(x)\}$. This means that, for such combinations of~$w$ and~$\varepsilon$, the scalarizing function~$s_{w,\varepsilon}$ coincides with the weighted max-ordering scalarizing function.
	It is well-known that, for $y \in Y_N$, any optimal solution~$x$ for the weighted max-ordering scalarizing function with weights~$w_1= y_2$ and $w_2 = y_1$ is a preimage of~$y$, i.e., $f(x) = y$ and $x \in X_E$.
\end{example}

\section{Weighted Scalarizations}\label{sec:specific-scalarizing}

In this section, we tailor the results of Section~\ref{sec:conditions} to so-called \emph{weighted scalarizations}, in which the objective functions are weighted by positive scalars before a given scalarizing function~$s$ for an objective decomposition~$\Pi$ is applied. By varying the weights, different optimal solutions are potentially obtained. In Section~\ref{sec:simplified-approximation-quality}, we show that the computation of the approximation quality $\alpha$ given in Theorem~\ref{cor:sup-inf-sup} simplifies for weighted scalarizations. Moreover, we see in Section~\ref{sec:tightness-norm-based-scalarizations} that $\alpha$ is easy to calculate and is best possible for all norm-based weighted scalarizations applied in the context of multiobjective optimization.

\medskip

As in Section~\ref{sec:conditions}, we assume without loss of generality that the objective decomposition is given as~$\Pi = (\{1,\ldots,k\},\{k+1,\ldots,p\})$ for some $k \in \{0,\ldots, p\}$. Weighted scalarizations for $\Pi$ are then formally defined as follows:
\begin{definition}
    Let~$W \subseteq \Rpg$ be a \emph{set of possible weights} and $s: \Rpg \to \R$ some scalarizing function for $\Pi$. Then, the \emph{weighted scalarization~$S$} induced by $W$ and $s$ is defined via
    \begin{align}\label{eq:weighted-definition}
        S = \left\{s_w \colon \R_>^p \to \R, s_w(y) = s(w_1\cdot y_1, \ldots, w_p\cdot y_p) \;\vert\; w \in W \right\}.
    \end{align}
\end{definition}
As the most prominent example, this class contains the weighted sum scalarization, where $W = \Rpg$ and $s\colon\Rpg \to \R, s(y) \coloneqq \sum_{i =1}^k y_i - \sum_{i=k+1}^p y_i$, see~Example~\ref{ex:weighted_sum}. 

\subsection{Simplified Computation of the Approximation Quality}\label{sec:simplified-approximation-quality}
 
For weighted scalarizations~$S$ as in~\eqref{eq:weighted-definition}, the computation of the approximation quality~$\alpha$ given in Theorem~\ref{cor:sup-inf-sup} simplifies as follows:
\begin{lemma}\label{lem:sup-inf-sup-for-weighted}
	Let~$S$ be the weighted scalarization induced by $W = \Rpg$ and some scalarizing function~$s$ for $\Pi$. Define $\alpha \geq 1$ as in \eqref{eq:alphadef-onepoint}, i.e.,
	\begin{align*}
    	\alpha \coloneqq \sup_{y' \in \Rpg} \quad \inf_{s' \in S}\quad \sup\left\{\max \left\{ \frac{y_1}{y'_1}, \ldots, \frac{y_k}{y'_k}, \frac{y'_{k+1}}{y_{k+1}},\ldots,\frac{y'_{p}}{y_{p}} \right\} \; \Bigg\vert \; y \in L(y',s')\right\}.
	\end{align*}
	Further, define $\beta \geq 1$ by
	\begin{align*}
    	\beta \coloneqq \inf_{\bar{y} \in \Rpg} \ \sup \left\{\max \left\{ \frac{y^*_1}{\bar{y}_1}, \ldots, \frac{y^*_k}{\bar{y}_k}, \frac{\bar{y}_{k+1}}{y^*_{k+1}},\ldots,\frac{\bar{y}_{p}}{y^*_{p}} \right\}\; \Bigg\vert \; y^* \in L(\bar{y},s)\right\}.
	\end{align*}
	Then, it holds that~$\alpha = \beta$.
\end{lemma}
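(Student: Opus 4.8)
The plan is to reduce both $\alpha$ and $\beta$ to the same quantity by means of a ratio-preserving change of variables that exploits the multiplicative structure of weighted scalarizations. Fix $y' \in \Rpg$ and $w \in \Rpg$, and set $\bar y := (w_1 y'_1, \ldots, w_p y'_p)$. The key observation is that $y \in L(y', s_w)$ holds if and only if $y^* := (w_1 y_1, \ldots, w_p y_p) \in L(\bar y, s)$, so $y \mapsto y^*$ is a bijection $L(y', s_w) \to L(\bar y, s)$ (mapping $\Rpg$ onto $\Rpg$, since all $w_i > 0$). Moreover, this bijection preserves the relevant ratios: for $i \le k$ one has $y_i/y'_i = (y^*_i/w_i)/(\bar y_i/w_i) = y^*_i/\bar y_i$, and for $i > k$ one has $y'_i/y_i = \bar y_i/y^*_i$. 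Hence the inner supremum in the definition of $\alpha$, taken over $L(y', s_w)$, equals precisely the supremum appearing in the definition of $\beta$ evaluated at the point $\bar y$.

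First I would record this equality of suprema for every $w \in \Rpg$ (with $y'$ held fixed). Then, since $W = \Rpg$, the map $w \mapsto (w_1 y'_1, \ldots, w_p y'_p)$ is a bijection of $\Rpg$ onto itself; taking the infimum over $s' \in S$ (equivalently, over $w \in \Rpg$) and re-indexing by $\bar y$ gives
\[
\inf_{s' \in S}\ \sup\left\{\max\left\{\tfrac{y_1}{y'_1}, \ldots, \tfrac{y'_p}{y_p}\right\} \;\Big\vert\; y \in L(y', s')\right\} \;=\; \inf_{\bar y \in \Rpg}\ \sup\left\{\max\left\{\tfrac{y^*_1}{\bar y_1}, \ldots, \tfrac{\bar y_p}{y^*_p}\right\} \;\Big\vert\; y^* \in L(\bar y, s)\right\} \;=\; \beta .
\]
The right-hand side is independent of $y'$, so taking $\sup_{y' \in \Rpg}$ on both sides yields $\alpha = \beta$.

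I do not expect a genuine obstacle here; once the ratio-preserving bijection between level sets is set up, the rest is bookkeeping. The points that require a little care are verifying that $y \in \Rpg \iff y^* \in \Rpg$ (which uses that $w$ is componentwise positive) and noting that the assumption $W = \Rpg$ is exactly what makes $\bar y$ sweep out the entire positive orthant as $w$ does — without it one would in general only obtain an inequality. It is also worth remarking that the chain of equalities above is valid in $[1,\infty]$ under the usual conventions (e.g.\ $\sup\emptyset = 1$), so no finiteness of $\alpha$ or $\beta$ needs to be assumed in advance.
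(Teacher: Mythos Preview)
Your proposal is correct and follows essentially the same approach as the paper: both arguments use the substitution $\bar y_i = w_i y'_i$ and $y^*_i = w_i y_i$ to identify the level sets $L(y',s_w)$ and $L(\bar y,s)$ in a ratio-preserving way, observe that $w \mapsto \bar y$ is a bijection of $\Rpg$ (since $W=\Rpg$ and $y'_i>0$), and conclude that the inner infimum is the constant~$\beta$ independently of~$y'$. Your framing in terms of a ratio-preserving bijection is slightly more explicit, but the content is identical.
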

\begin{proof}
	Let~$y' \in \Rpg$. For each~$s' \in S$, there exists a vector~$w\in \Rpg$ of parameters such that $s' = s_w$. Vice versa, for each vector~$w\in \Rpg$ of parameters, there exists a scalarizing function~$s' \in S$ such that $s_w = s'$. Consequently,
	\begin{align*}
		&\inf_{s' \in S}\quad \sup\left\{\max \left\{ \frac{y_1}{y'_1}, \ldots, \frac{y_k}{y'_k}, \frac{y'_{k+1}}{y_{k+1}},\ldots,\frac{y'_{p}}{y_{p}} \right\} \; \Bigg\vert \; y \in L(y',s')\right\}\\
		&= \inf_{w \in \Rpg}\quad \sup\left\{\max \left\{ \frac{y_1}{y'_1}, \ldots, \frac{y_k}{y'_k}, \frac{y'_{k+1}}{y_{k+1}},\ldots,\frac{y'_{p}}{y_{p}} \right\} \; \Bigg\vert \; y \in L(y',s_w)\right\}.
	\end{align*}
	Further, it holds that
	\begin{align*}
		\inf_{w \in \Rpg}\quad \sup \Bigg\{ \max \bigg\{ \frac{y_1}{y'_1}, \ldots, \frac{y_k}{y'_k}, &\frac{y'_{k+1}}{y_{k+1}},\ldots,\frac{y'_{p}}{y_{p}} \bigg\} \; \bigg\vert \; y \in L(y',s_w) \Bigg\} \\
		=\inf_{w \in \Rpg} \quad \sup \Bigg\{ \max \bigg\{ \frac{y_1}{y'_1}, \ldots, \frac{y_k}{y'_k}, &\frac{y'_{k+1}}{y_{k+1}},\ldots,\frac{y'_{p}}{y_{p}} \bigg\} \; \bigg\vert \; \\
		y \in \Rpg, s(w_1\,\cdot\,&y_1, \ldots, w_p \cdot y_p) =  s(w_1 \cdot y'_1, \ldots, w_p \cdot y'_p) \Bigg\}  \\
		=\inf_{\bar{y} \in \Rpg}\quad \sup \Bigg\{ \max \bigg\{ \frac{y_1}{y'_1}, \ldots, \frac{y_k}{y'_k}, &\frac{y'_{k+1}}{y_{k+1}},\ldots,\frac{y'_{p}}{y_{p}} \bigg\} \; \bigg\vert \; \\
	    &y \in \Rpg, s\left(\frac{\bar{y}_1}{y'_1} \cdot y_1, \ldots, \frac{\bar{y}_p}{y'_p} \cdot y_p\right) =  s(\bar{y}) \Bigg\}  \\
		=\inf_{\bar{y} \in \Rpg}\quad \sup\Bigg\{\max \bigg\{ \frac{y^*_1}{\bar{y}_1}, \ldots, \frac{y^*_k}{\bar{y}_k}, &\frac{\bar{y}_{k+1}}{y^*_{k+1}},\ldots,\frac{\bar{y}_{p}}{y^*_{p}}\bigg\} \; \bigg\vert \; y^* \in \Rpg, s(y^*) =  s(\bar{y})\Bigg\}  
		= \beta,
	\end{align*}
	where we substitute~$\bar{y}_i = w_i \cdot y'_i$, $i = 1, \ldots,p$, in the second equality and 
	$y^*_i = \frac{\bar{y}_i}{y'_i} \cdot y_i$, $i=1,\ldots,p$, in the third equality. \textcolor{black}{Note that, since $y'_i>0$ for $i=1,\ldots,p$, every point~$\bar{y}\in\Rpg$ can actually be obtained via~$\bar{y}_i = w_i \cdot y'_i$ using an appropriate positive weight vector~$w\in\Rpg$.} Hence, for each~$y' \in \Rpg$, the value
	\begin{align*}
		\inf_{s' \in S}\quad \sup\left\{\max \left\{ \frac{y_1}{y'_1}, \ldots, \frac{y_k}{y'_k}, \frac{y'_{k+1}}{y_{k+1}},\ldots,\frac{y'_{p}}{y_{p}} \right\} \; \bigg\vert \; y \in L(y',s')\right\}
	\end{align*}
	is equal to the constant~$\beta$, and we obtain $\alpha = \sup_{y' \in \Rpg} \beta = \beta$.
\end{proof}
Consequently, if, for some $\bar{y} \in \Rpg$, the level set~$L(\bar{y},s)$ is bounded from above in $i = 1, \ldots,k$ by some $q \in \Rpg$ and bounded from below in $i=k+1,\ldots,p$ by some $q' \in \Rpg$, Theorem~\ref{cor:sup-inf-sup} and Lemma~\ref{lem:sup-inf-sup-for-weighted} imply that, in each instance, every optimal solution set for~$S$ constitutes an approximation set with approximation quality arbitrarily close or even equal to~$\beta$, with~$\beta$ computed as in Lemma~\ref{lem:sup-inf-sup-for-weighted}. This is captured in the following theorem:
\begin{theorem}\label{thm:condition-weighted-scalarization}
    Let~$S$ be the weighted scalarization induced by $W = \Rpg$ and some scalarizing function~$s$ for $\Pi$ such that, additionally, $L(\bar{y},s)$ is bounded from above in $i = 1, \ldots,k$ by some $q \in \Rpg$ and bounded from below in $i=k+1,\ldots,p$ by some $q' \in \Rpg$ for some $\bar{y} \in \Rpg$. Define
	\begin{align*}
	    \beta \coloneqq \inf_{\bar{y} \in \Rpg} \ \sup \left\{\max \left\{ \frac{y^*_1}{\bar{y}_1}, \ldots, \frac{y^*_k}{\bar{y}_k}, \frac{\bar{y}_{k+1}}{y^*_{k+1}},\ldots,\frac{\bar{y}_{p}}{y^*_{p}} \right\}\; \bigg\vert \; y^* \in L(\bar{y},s)\right\}.
    \end{align*}
    Then, in each instance of each $p$-objective optimization problem of type~$\Pi$, every optimal solution set for~$S$ is a $(\beta + \varepsilon)$-approximation set for any $\varepsilon > 0$. If the infimum is attained, i.e., if there exists $\bar{y} \in \Rpg$ such that
	\begin{align*}
	    \beta = \sup \left\{\max \left\{ \frac{y^*_1}{\bar{y}_1}, \ldots, \frac{y^*_k}{\bar{y}_k}, \frac{\bar{y}_{k+1}}{y^*_{k+1}},\ldots,\frac{\bar{y}_{p}}{y^*_{p}} \right\}\; \bigg\vert \; y^* \in L(\bar{y},s)\right\}
	\end{align*}
	holds, then, in each instance of each $p$-objective optimization problem of type~$\Pi$, every optimal solution set for $S$ is a $\beta$-approximation set.\qed
\end{theorem}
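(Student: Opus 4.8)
The plan is to obtain this theorem as a direct consequence of Theorem~\ref{cor:sup-inf-sup}, simplified by means of Lemma~\ref{lem:sup-inf-sup-for-weighted}. First I would observe that, for the weighted scalarization~$S$ induced by~$W=\Rpg$ and~$s$, the quantity~$\alpha$ appearing in Theorem~\ref{cor:sup-inf-sup} is exactly the quantity also denoted~$\alpha$ in Lemma~\ref{lem:sup-inf-sup-for-weighted}, and that lemma shows~$\alpha=\beta$. Thus it remains only to verify the hypotheses of Theorem~\ref{cor:sup-inf-sup}, now phrased in terms of~$\beta$.

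The second step is the finiteness check. By assumption there is some~$\bar y\in\Rpg$ for which~$L(\bar y,s)$ is bounded from above in the coordinates~$i=1,\ldots,k$ and bounded from below in the coordinates~$i=k+1,\ldots,p$ by vectors in~$\Rpg$; exactly as in the proof of Lemma~\ref{lem:two-solutions-sufficient}, this implies that the supremum occurring in the definition of~$\beta$ is finite for this particular~$\bar y$. Since~$\beta$ is the infimum of these suprema over all~$\bar y\in\Rpg$, it is at most this finite value, so~$\alpha=\beta<\infty$ (and~$\beta\geq 1$, since~$\bar y\in L(\bar y,s)$ always gives a ratio of~$1$, so that~$\beta$ is a legitimate approximation factor). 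The first statement of Theorem~\ref{cor:sup-inf-sup} then immediately yields that, in each instance of each~$p$-objective optimization problem of type~$\Pi$, every optimal solution set for~$S$ is an~$(\alpha+\varepsilon)=(\beta+\varepsilon)$-approximation set for every~$\varepsilon>0$, which is the first claim.

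For the second claim, I would revisit the proof of Lemma~\ref{lem:sup-inf-sup-for-weighted}, where it is shown that for \emph{every} fixed~$y'\in\Rpg$ the inner infimum~$\inf_{s'\in S}\sup\{\,\ldots\,\}$ appearing in Theorem~\ref{cor:sup-inf-sup} already equals the constant~$\beta$, and where the substitutions used set up a bijection between the scalarizing functions~$s'=s_w\in S$ (equivalently, the weight vectors~$w\in\Rpg$) and the points~$\bar y\in\Rpg$ via~$\bar y_i=w_i y'_i$. Hence, if the infimum defining~$\beta$ is attained at some~$\bar y\in\Rpg$, then for each~$y'\in\Rpg$ the inner infimum in Theorem~\ref{cor:sup-inf-sup} is also attained (namely at the weight vector~$w$ with~$w_i=\bar y_i/y'_i$) and finite, so the extra hypothesis of the second statement of Theorem~\ref{cor:sup-inf-sup} is satisfied; that statement then gives that every optimal solution set for~$S$ is a~$\beta$-approximation set. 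Since the whole argument amounts to substituting Lemma~\ref{lem:sup-inf-sup-for-weighted} into Theorem~\ref{cor:sup-inf-sup}, there is no genuine obstacle; the only point that needs care is keeping the roles of the minimization coordinates (bounded from above) and the maximization coordinates (bounded from below) straight when invoking the boundedness hypothesis of Theorem~\ref{cor:sup-inf-sup}.
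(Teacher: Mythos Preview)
Your proposal is correct and matches the paper's approach exactly: the paper presents this theorem with only a \qed, stating immediately beforehand that it follows from Theorem~\ref{cor:sup-inf-sup} together with Lemma~\ref{lem:sup-inf-sup-for-weighted}. You have simply spelled out the verification of the hypotheses (finiteness of~$\beta$ from the boundedness assumption, and attainment of the inner infimum for each~$y'$ via the bijection~$\bar y_i=w_i y'_i$ established in the proof of Lemma~\ref{lem:sup-inf-sup-for-weighted}), which is precisely what the paper leaves implicit.
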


\begin{example}
    Again, consider the objective decomposition~$\Pi^{\min} = (\{1,\ldots, p\}, \emptyset)$ and the scalarizing function~$s\colon\Rpg \rightarrow \R, s(y) = \sum_{i=1}^p y_i$. Then, the weighted scalarization induced by $W = \Rpg$ and $s$ is the weighted sum scalarization~$S = \{ s_w\colon \Rpg \rightarrow \R, s_w(y) = \sum_{i=1}^p w_i y_i  \; \vert \; w \in \Rpg\}$ for minimization. For each~$\bar{y}\in \Rpg$, it can be shown (see Lemma~\ref{lem:sup-norm} in the appendix) that a tight upper bound on the component-wise worst case ratio of $\bar{y}$ to any $y^* \in L(\bar{y},s)$ is
    \begin{align*}
        \sup \left\{\max\left\{ \frac{y^*_1}{\bar{y}_1},\ldots, \frac{y^*_p}{\bar{y}_p} \right\} \; \bigg\vert \; y^*\in\Rpg, s(y^*) = s(\bar{y}) \right\} = \left(\sum_{j=1}^p \bar{y}_j \right) \cdot \max_{i=1,\ldots,p} \left\{ \frac{1}{\bar{y}_i} \right\}.
    \end{align*}
    For~$p=2$, this is illustrated in Figure~\ref{fig:level-sets-weighted-sum} (left).
    Since $(1,\ldots,1) \in \Rpg$ with
	\begin{align*}
	   &\inf_{\bar{y} \in \Rpg} \quad \sup \left\{\max_{i=1,\ldots,p}\left\{ \frac{y^*_i}{\bar{y}_i} \right\} \; \bigg\vert \; y^*\in\Rpg, s(y^*) = s(\bar{y}) \right\} \\
	  \geq\ &\sup \left\{\max_{i=1,\ldots,p}\left\{ \frac{y_i}{1} \right\}\; \bigg\vert \; y\in\Rpg, s(y) = s((1,\ldots,1)) \right\}\\
	   = \ &\left(\sum_{j=1}^p 1 \right) \max_{i=1,\ldots,p} \left\{ \frac{1}{1} \right\} = p,
	\end{align*}
	where the proof of the first inequality is given in Theorem~\ref{thm:norms-minimization}, the approximation quality for the weighted sum scalarization for minimization given in Theorem~\ref{thm:condition-weighted-scalarization} resolves to~$\beta = p$.
	
	In view of Theorem~\ref{cor:sup-inf-sup}, observe that, for each~$y' \in \Rpg$, exactly the parameter vector $w' = \left( \frac{\sum_{i=1}^p y'_i}{y'_1}, \ldots, \frac{\sum_{i=1}^p y'_i}{y'_p} \right) \in \Rpg$ satisfies 
	\begin{align*}
	    &\inf_{s_w \in S} \sup\left\{\max \left\{ \frac{y_1}{y'_1}, \ldots, \frac{y_p}{y'_p} \right\} \; \bigg\vert \;y \in L(y',s_{w})\right\} \\
	    = &\ \sup\left\{\max \left\{ \frac{y_1}{y'_1}, \ldots, \frac{y_p}{y'_p} \right\} \; \bigg\vert \;y \in L(y',s_{w'}) \right\} = p,
	\end{align*}
	see Figure~\ref{fig:level-sets-weighted-sum} (right) for an illustration of the case $p=2$. Hence, Theorems~\ref{cor:sup-inf-sup} and~\ref{thm:condition-weighted-scalarization} indeed generalize the known approximation results on the weighted sum scalarization for minimization in~\cite{Glasser+etal:multi-hardness-proceedings,Bazgan+etal:power-weighted-sum}. In fact, the known tightness of these results yields that the approximation quality in Theorems~\ref{cor:sup-inf-sup} and~\ref{thm:condition-weighted-scalarization} is tight for the weighted sum scalarization for minimization.
	
	For the weighted sum scalarization for objective decompositions~$\Pi$ containing maximization objectives, however, it can be shown that 
	\begin{align*}
	    \inf_{\bar{y} \in \Rpg} \ \sup \left\{\max \left\{ \frac{y^*_1}{\bar{y}_1}, \ldots, \frac{y^*_k}{\bar{y}_k}, \frac{\bar{y}_{k+1}}{y^*_{k+1}},\ldots,\frac{\bar{y}_{p}}{y^*_{p}} \right\}\; \Bigg\vert \; y^* \in L(\bar{y},s)\right\} = \infty.
	\end{align*}
	This bound is also tight: for every~$\alpha \geq 1$, an instance of a $p$-objective optimization problem of type~$\Pi$ exists for which the set of supported solutions is not an $\alpha$-approximation set. A proof is given in Section~\ref{sec:tightness-norm-based-scalarizations}. 
    \end{example}
    \begin{figure}[tb]
        \centering
        \includegraphics[width = 1 \textwidth]{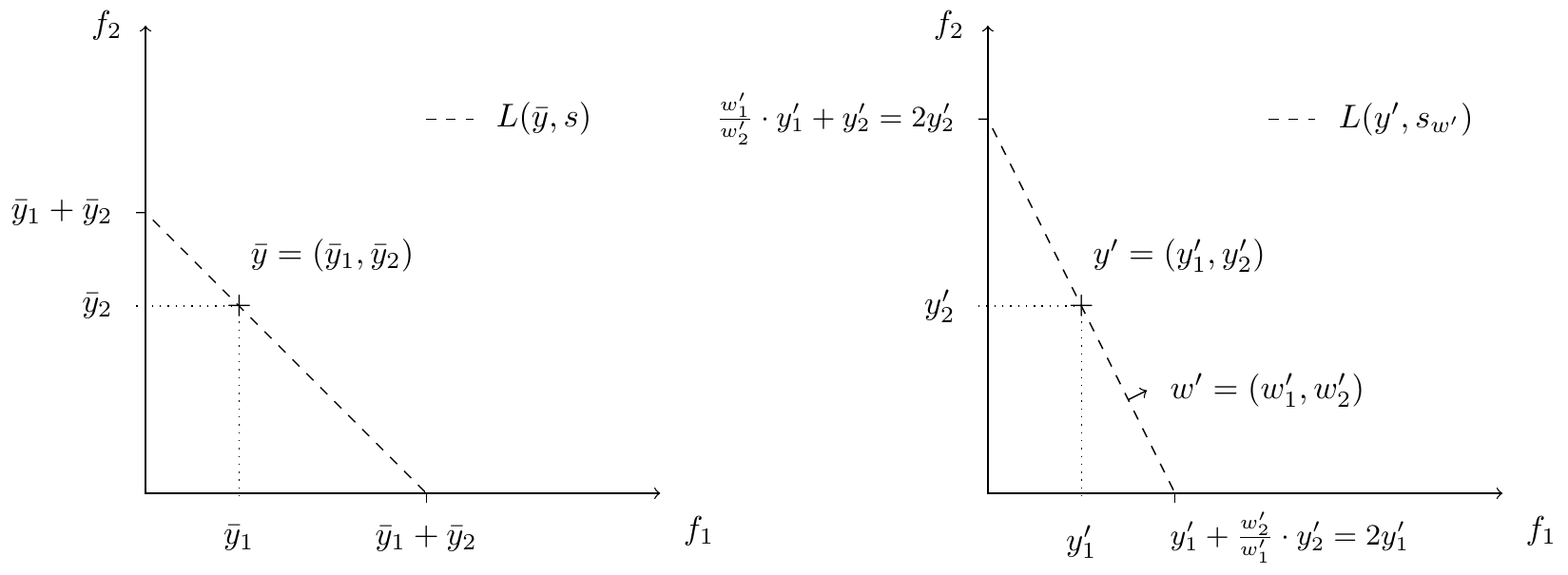}
        \caption{Let $s(y) = y_1 + y_2$ be the (unweighted) sum scalarizing function for $\Pi^{\min} = (\{1,2\},\emptyset)$.
        Left: The component-wise worst case ratio of $\bar{y}$ to any $y^* \in L(\bar{y},s)$ is bounded by \textcolor{black}{$\sup \left\{ \max\left\{\frac{y^*_1}{\bar{y}_1}, \frac{y^*_2}{\bar{y}_2} \right\} \; \bigg\vert \; y^* \in L(\bar{y},s) \right\} = \max \left\{ \frac{\bar{y}_1 + \bar{y}_2}{\bar{y}_1}, \frac{\bar{y}_1 + \bar{y}_2}{\bar{y}_2}\right\} \geq 2.$}
        Right: The component-wise worst case ratio of $y'$ to any $y \in L(y',s_{w'})$, where $w' = \left( \frac{y'_1 + y'_2}{y'_1},  \frac{y'_1 + y'_2}{y'_2} \right)$, is bounded by \textcolor{black}{$\sup \left\{ \max\left\{\frac{y_1}{y'_1}, \frac{y}{y'_2} \right\} \; \bigg\vert \; y \in L(y',s_{w'}) \right\} = \max \left\{ \frac{1}{y'_1} \cdot \left(y'_1 + \frac{w'_2}{w'_1} \cdot y'_2 \right),\frac{1}{y'_2} \cdot \left(\frac{w'_1}{w'_2} \cdot y'_1 +  y'_2\right)  \right\} = \max \left\{ \frac{2 \cdot y'_1}{y'_1}, \frac{2 \cdot y'_2}{y'_2} \right\} = 2.$}}
        \label{fig:level-sets-weighted-sum}
    \end{figure}

\subsection{Tightness Results for Norm-based Weighted Scalarizations}\label{sec:tightness-norm-based-scalarizations}

In the following, we consider scalarizations as in~\eqref{eq:weighted-definition} for which the defining scalarizing function~$s$ is based on norms. We first consider the case that all objective functions are to be minimized and then investigate the case with at least one maximization objective.

\medskip

Note that a norm restricted to the positive orthant is not necessarily a scalarizing function for $\Pi^{\min} = (\{1,\ldots,p\},\emptyset)$.\footnote{For example, consider the norm $\lvert \lvert y \rvert \rvert \coloneqq \lvert y_1 \rvert + \lvert y_2 - y_1 \rvert$ on $\R^2$. Then $(4,2) <_{\Pi^{\min}} (5,5)$, but $\lvert \lvert(4,2)\rvert \rvert = 6 > 5 = \lvert \lvert(5,5)\rvert \rvert$.} Hence, we have to assume that $s$ is strictly $\Pi^{\min}$-monotone. This assumption is satisfied, among others, for all $q$-norms with $1 \leq q \leq \infty$.  The next result states that, for each weighted scalarization induced by~$W = \Rpg$ and a strictly $\Pi^{\min}$-monotone norm~$s$, the computation of the approximation quality given in Theorem~\ref{thm:condition-weighted-scalarization} simplifies to an explicit expression. Moreover, the approximation quality is best possible. 
\cite{Glasser+etal:multi-hardness-proceedings} compute the value of $\alpha$ for the special case of $q$-norms based on constants given by the norm equivalence to the 1-norm. The next result extends this: for each $\Pi^{\min}$-monotone norm, there is actually a closed-form expression for the approximation quality~$\alpha$:
\begin{theorem} \label{thm:norms-minimization}
	Let~$s\colon \R^p \to \R_\geq$ be a strictly $\Pi^{\min}$-monotone norm, let~$S = \{s_w\colon \Rpg \rightarrow \R, s_w(y) = s(w_1 \cdot y_1, \ldots, w_p \cdot y_p) \; \vert \; w \in \Rpg\}$ be the weighted scalarization induced by $W= \Rpg$ and $s$ and denote by~$e^i$ the $i$-th unit vector in~$\R^p$.
    Then, 
	\begin{align*}
	    \inf_{\bar{y} \in \Rpg} \ \sup \left\{\max \left\{ \frac{y^*_1}{\bar{y}_1}, \ldots, \frac{y^*_p}{\bar{y}_p} \right\}\; \bigg\vert \; y^* \in L(\bar{y},s)\right\} = s\left(\frac{1}{s(e^1)},\ldots, \frac{1}{s(e^p)}\right) \eqqcolon \alpha.
	\end{align*}
	Moreover,
	\begin{enumerate}
		\item in each instance of each $p$-objective \textbf{minimization} problem, every optimal solution set for~$S$ is an $\alpha$-approximation set, and
		\item for each $0<\varepsilon<1$, there exists an instance of a $p$-objective \textbf{minimization} problem where the set of $S$-supported solutions is \emph{not} an $\left(\alpha \cdot (1 - \varepsilon) \right)$-approximation set.
	\end{enumerate}
\end{theorem}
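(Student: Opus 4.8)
\emph{Plan.} I would establish the three parts of the statement in order: first the closed-form value of the infimum, then item~1 (which follows at once from Theorem~\ref{thm:condition-weighted-scalarization}), and finally the tightness in item~2, which is the substantial part. For the value, the first move is to reduce to the \emph{normalized} case $s(e^i)=1$ for all $i$ by replacing $s$ with the strictly $\Pi^{\min}$-monotone norm $\tilde s(z)\coloneqq s(z_1/s(e^1),\dots,z_p/s(e^p))$: this induces the \emph{same} weighted scalarization $S$ (the diagonal rescaling of the objectives is absorbed into $W=\Rpg$) and satisfies $\tilde s(\mathbf 1)=s(1/s(e^1),\dots,1/s(e^p))=\alpha$, so it suffices to prove $\inf_{\bar y\in\Rpg} s(\bar y)\cdot\max_i(1/\bar y_i)=s(\mathbf 1)$. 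The inner supremum I would compute coordinate-wise: applying Lemma~\ref{lem:weak-monotonicity} to $y^*_i e^i+\frac{1}{n}\sum_{j\neq i}y^*_j e^j\leqq_{\Pi^{\min}} y^*$ (which lies in $\Rpg$) and letting $n\to\infty$, continuity of the norm on $\R^p$ gives $y^*_i s(e^i)\le s(\bar y)$ for every $y^*\in L(\bar y,s)$; conversely the points $\frac{s(\bar y)}{s(y(\mu))}\,y(\mu)$ with $y(\mu)\coloneqq e^i+\mu\sum_{j\neq i}e^j$ lie in $L(\bar y,s)$ and have $i$-th coordinate tending to $s(\bar y)/s(e^i)$ as $\mu\downarrow 0$. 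Combining the bounds, $\sup\{\max_i y^*_i/\bar y_i : y^*\in L(\bar y,s)\}=\max_i s(\bar y)/(\bar y_i s(e^i))$. Taking the infimum over $\bar y$ (in normalized form, $\inf_{\bar y} s(\bar y)/\min_i\bar y_i$) and using $(\min_i\bar y_i)\mathbf 1\leqq_{\Pi^{\min}}\bar y$ with Lemma~\ref{lem:weak-monotonicity} yields $s(\bar y)\ge(\min_i\bar y_i)s(\mathbf 1)$, hence the value is $\ge s(\mathbf 1)=\alpha$, with equality attained at $\bar y=\mathbf 1$.

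For item~1, observe that every level set $L(\bar y,s)$ is a norm sphere in finite dimension, hence compact and in particular bounded from above in every coordinate, and the infimum just computed is \emph{attained} (e.g.\ at $\bar y=(1/s(e^1),\dots,1/s(e^p))$). Thus Lemma~\ref{lem:sup-inf-sup-for-weighted} and Theorem~\ref{thm:condition-weighted-scalarization} apply with $\beta=\alpha$, giving that in each instance of each $p$-objective minimization problem every optimal solution set for $S$ is an $\alpha$-approximation set.

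For item~2 I would exhibit an explicit family of instances. We may assume $\alpha>1$ (the claim is immediate for $\alpha=1$ since then $\alpha(1-\varepsilon)<1$) and, as above, $s(e^i)=1$. Given $\varepsilon\in(0,1)$, set $\varepsilon''\coloneqq\alpha\varepsilon/2$ and consider the finite instance with $X=\{x',x^{(1)},\dots,x^{(p)}\}$, $f(x')\coloneqq\mathbf 1$ and $f(x^{(i)})\coloneqq y^{(i)}\coloneqq(\alpha-\varepsilon'')e^i+\delta\sum_{j\neq i}e^j$ for a small $\delta\in(0,1)$ with $\delta<\alpha-\varepsilon''$ to be fixed; then $f(X)\subseteq\Rpg$ is compact. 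The crucial claim is that $x'$ is not $S$-supported, i.e.\ for every $w\in\Rpg$ some $y^{(i)}$ satisfies $s_w(y^{(i)})<s_w(\mathbf 1)=s(w)$. By homogeneity it suffices to check $w$ with $s(w)=1$; for such $w$ one has $\min_i w_i\le 1/\alpha$ (from $(\min_i w_i)\mathbf 1\leqq_{\Pi^{\min}}w$ and Lemma~\ref{lem:weak-monotonicity}), so the continuous function $\phi(w,\delta)\coloneqq\min_i s((\alpha-\varepsilon'')w_i e^i+\delta\sum_{j\neq i}w_j e^j)$ satisfies $\phi(w,0)=(\alpha-\varepsilon'')\min_i w_i\le(\alpha-\varepsilon'')/\alpha=1-\varepsilon''/\alpha<1$, uniformly over the compact set $\{w\ge 0: s(w)=1\}$. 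A uniform-continuity argument on $\{w\ge 0:s(w)=1\}\times[0,1]$ then produces one $\delta\in(0,1)$ with $\phi(w,\delta)<1$ for all such $w$, and homogeneity extends this to all $w\in\Rpg$. Hence $\mathbf 1$ is never an $s_w$-minimizer over $f(X)$, so $x'$ is not $S$-supported and every $S$-supported solution has image in $\{y^{(1)},\dots,y^{(p)}\}$; since $y^{(i)}_i=\alpha-\varepsilon''=\alpha(1-\varepsilon/2)>\alpha(1-\varepsilon)=\alpha(1-\varepsilon)\cdot(\mathbf 1)_i$, no $y^{(i)}$ $(\alpha(1-\varepsilon))$-approximates $\mathbf 1$. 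Therefore the set of $S$-supported solutions does not $(\alpha(1-\varepsilon))$-approximate $x'$, which is item~2.

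I expect the main obstacle to be item~2: one must beat the point $\mathbf 1$ in \emph{all} scalarizations $s_w$ simultaneously while keeping the dominating images $y^{(i)}$ pinned near the coordinate directions (so that they still miss $\mathbf 1$ by the required factor). The delicate step is the uniform choice of $\delta$, which rests on the normalization $s(e^i)=1$ and on the monotonicity bound $\min_i w_i\le 1/\alpha$; everything else is either elementary or a direct appeal to Lemma~\ref{lem:weak-monotonicity}, Lemma~\ref{lem:sup-inf-sup-for-weighted}, and Theorem~\ref{thm:condition-weighted-scalarization}.
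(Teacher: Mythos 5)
Your proposal is correct and follows essentially the same route as the paper: the closed-form value via the coordinate-wise bound $y^*_i\, s(e^i) \le s(\bar y)$ on level sets (the paper's Lemma~\ref{lem:sup-norm}) combined with monotonicity at $(\min_i \bar y_i)\cdot(1,\ldots,1)$, item~1 via Theorem~\ref{thm:condition-weighted-scalarization} since the infimum is attained, and for item~2 a normalized hard instance in which a central point is never $s_w$-optimal because near-unit-vector images beat it, using exactly the key inequality $\min_i w_i \le s(w)/\alpha$. The only differences are cosmetic: the paper fixes $\delta = \varepsilon/(2\alpha)$ explicitly via the triangle inequality rather than your compactness/uniform-continuity choice of $\delta$, and it handles $\alpha = 1$ within the same construction instead of as a separate case.
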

\begin{proof}
    For each $\bar{y} \in \Rpg$, Lemma~\ref{lem:sup-norm} in the appendix implies that
	\begin{align*}
		\sup \left\{\max \left\{ \frac{y^*_1}{\bar{y}_1}, \ldots, \frac{y^*_p}{\bar{y}_p} \right\}\; \bigg\vert \; y^* \in L(\bar{y},s)\right\} = s(\bar{y}) \cdot \max\left\{ \frac{1}{s(e^1) \cdot \bar{y}_1},\ldots,\frac{1}{s(e^p) \cdot \bar{y}_p} \right\}.
	\end{align*}
	Then, with ${i_{\min}} \coloneqq \arg \min_{i = 1,,\ldots, p} s(e^{i}) \cdot \bar{y}_{i}$, it holds that
	\begin{align*}
		&s(\bar{y}) \cdot \max\left\{ \frac{1}{s(e^1) \cdot \bar{y}_1},\ldots,\frac{1}{s(e^p) \cdot \bar{y}_p} \right\} \\
		=\ &s\left(\frac{s(e^1) \cdot \bar{y}_1}{s(e^1)},\ldots,\frac{s(e^p) \cdot \bar{y}_p}{s(e^p)} \right) \cdot \frac{1}{s(e^{i_{\min}}) \cdot \bar{y}_{i_{\min}}} \\
		\geq\ &s\left(\frac{s(e^{i_{\min}}) \cdot \bar{y}_{i_{\min}}}{s(e^1)},\ldots,\frac{s(e^{i_{\min}}) \cdot \bar{y}_{i_{\min}}}{s(e^p)} \right) \cdot \frac{1}{s(e^{i_{\min}}) \cdot \bar{y}_{i_{\min}}} \\
		=\  &s\left(\frac{1}{s(e^1)},\ldots, \frac{1}{s(e^p)}\right).
	\end{align*}
	Since choosing $\bar{y} \coloneqq \left( \frac{1}{s(e^1)},\ldots, \frac{1}{s(e^p)} \right) \in \Rpg$ yields 
	\begin{align*}
	s(\bar{y}) \cdot \max\left\{ \frac{1}{s(e^1) \cdot \bar{y}_1},\ldots,\frac{1}{s(e^p) \cdot \bar{y}_p} \right\} =  s\left(\frac{1}{s(e^1)},\ldots, \frac{1}{s(e^p)}\right),
	\end{align*}
	we obtain
	\begin{align*}
    	&\inf_{\bar{y} \in \Rpg} \ \sup \left\{\max \left\{ \frac{y^*_1}{\bar{y}_1}, \ldots, \frac{y^*_k}{\bar{y}_k}, \frac{\bar{y}_{k+1}}{y^*_{k+1}},\ldots,\frac{\bar{y}_{p}}{y^*_{p}} \right\}\; \bigg\vert \; y^* \in L(\bar{y},s)\right\} \\
    	= &\ s\left(\frac{1}{s(e^1)},\ldots, \frac{1}{s(e^p)}\right).
	\end{align*}
   Then, Statement~$1$ follows by Theorem~\ref{thm:condition-weighted-scalarization} since the infimum is in fact attained. 

    \medskip
    
    We now prove Statement~$2$. Assume without loss of generality that~$s(e^i) = 1$ for~$i = 1,\ldots,p$. Otherwise use the norm $s' \coloneqq s_w$ for $w \in \Rpg$ with $w_i = \frac 1 {s(e^i)}$ instead. Then $S = \{s'_w \; \vert \; w \in \Rpg\}$, $s'(e^i) = s(w_i \cdot e^i) = w_i \cdot s(e^i) = 1$, and $\alpha = s\left(\frac 1 {s(e^1)}, \ldots, \frac 1 {s(e^p)}\right) = s'(1,\ldots, 1) = s'\left(\frac 1 {s'(e^1)}, \ldots, \frac 1 {s'(e^p)}\right)$.
    
    Given~$0<\varepsilon <1$, first define vectors~$\tilde{e}^1,\ldots,\tilde{e}^p \in \Rpg$ with~$\tilde{e}^i_j \coloneqq e^i_j + \delta$ for~$i,j \in\{1,\ldots,p\}$, where $\delta \coloneqq \frac{\varepsilon}{2 \alpha}$. Then, define a $p$-objective minimization problem instance $(X,f)$ with $X = \{\bar x, x^{(1)},\ldots, x^{(p)}\}$ via
    \begin{align*}
    f_j(\bar x) \coloneqq \frac 1 \alpha + \delta \textnormal{ for } j = 1,\ldots, p
    \end{align*}
    and
    \begin{align*}
        f(x^{(i)}) \coloneqq \left(1 - \frac \varepsilon 2\right) \cdot \tilde{e}^i \textnormal{ for } i = 1,\ldots, p.
    \end{align*}
    Then the solution~$\bar x$ is not $\left(\alpha \cdot (1 - \varepsilon)\right)$-approximated by any other solution~$x^{(i)}$: For each $i = 1,\ldots,p$, we have $f_i(\bar x) = \frac{ 1 + \frac\varepsilon 2} \alpha$ and $f_i(x^{(i)}) = \left(1 - \frac \varepsilon 2\right) \cdot \tilde{e}^i_i = \left(1 - \frac \varepsilon 2\right) \cdot (1+\delta)$ and, thus,
    \begin{align*}
        (1-\varepsilon) \cdot \alpha \cdot f_i(x^{(i)})= (1-\varepsilon) \cdot \left(1 + \frac \varepsilon 2\right) < 1 - \frac \varepsilon 2 < f_i(\bar{x}).
    \end{align*}
    Moreover, for each $w \in \Rpg$, the solution~$\bar x$ is not optimal for~$s_w$: Given $w \in \Rpg$, choose $i \in \{1,\ldots,p\}$ such that $w_i = \min_{j = 1,\ldots, p} w_j$. Then
    \begin{align*}
        s_w(\tilde{e}^i) \leq s_w(e^i) + \delta \cdot s_w(1,\ldots, 1)
        = w_i \cdot s(e^i) + \delta \cdot s(w)
        = w_i + \delta \cdot s(w),
    \end{align*}
    where the inequality follows by the triangle inequality.
    This implies that
    \begin{align*}
        s_w(f(x^{(i)})) &= \left(1-\frac \varepsilon 2 \right) \cdot s_w(\tilde{e}^i)\\
        &< s_w(\tilde{e}^i)\\
        &\leq w_i + \delta \cdot s(w)\\
        &= \frac 1 \alpha \cdot s(w_i,\ldots, w_i) + \delta \cdot s(w)\\
        &\leq \left(\frac 1 \alpha + \delta\right) \cdot s(w)\\
        &= s_w(f(\bar x)),
    \end{align*}
    which concludes the proof.
\end{proof}
Table~\ref{tbl:approximation-qualities-weighted-scalarizations} presents the approximation qualities given by Theorem~\ref{thm:norms-minimization} of the most frequently used norms in the context of multiobjective optimization.
\begin{table}[b]
     \centering
     \begin{tabular}{l|c|c|c|c}
       norm $s(y)$ & $\sum\limits_{i=1}^p y_i$ & $\left(\sum\limits_{i=1}^p y^q_i\right)^{\frac{1}{q}}$ & $ \max\limits_{i=1,\ldots,p} y_i$ & $\sum\limits_{i=1}^p y_i + \varrho \cdot \max\limits_{i=1,\ldots,p} \{ y_i\}$  \\
       \hline
       approx.\ qual.~$\alpha$ & $p$ & $p^{\frac{1}{q}}$ & 1 & $\frac{p + \varrho}{1+\varrho}$
     \end{tabular}

     \caption{Approximation qualities guaranteed by Theorem~\ref{thm:norms-minimization} for weighted scalarizations implied by the $1$-norm, a $q$-norm with $1 \leq q < \infty$, the Tchebycheff norm, and the modified augmented  Tchebycheff norm with $\varrho > 0$. In each case, the chosen reference point is the origin.}
    \label{tbl:approximation-qualities-weighted-scalarizations}
\end{table}

\medskip 

Theorem~\ref{thm:norms-minimization} can be further generalized: Recall Remark~\ref{rem:dual-alternatives} which illustrates alternative definitions for $\Gamma$-transformed scalarizing functions by means of continuous strictly increasing functions. This is possible since, given a scalarizing function~$s$, the optimal solution sets of the induced single-objective optimization problem instance do not change under the concatenation of any continuous strictly increasing function~$g$ with~$s$. The same reasoning also yields that the class of scalarizations, for which the approximation quality given in Theorem~\ref{cor:sup-inf-sup} can be stated explicitly and is best possible, is even broader:
\begin{corollary}
    Let $s\colon\R^p \rightarrow \R_\geq$ be a strictly $\Pi^{\min}$-monotone norm and let $g\colon(\Rpg) \rightarrow \R$ be a continuous strictly increasing function. Let $\tilde{S} = \{\tilde{s}_w\colon \Rpg \rightarrow \R, \tilde{s}_w(y) = g(s(w_1 \cdot y_1,\ldots, w_p \cdot y_p)) \, \vert \, w \in \Rpg\}$ be the weighted scalarization for minimization induced by $W = \Rpg$ and the concatenation of $g$ and $s$. Further, let $\alpha \coloneqq s\left(\frac{1}{s(e^1)}, \ldots, \frac{1}{s(e^p)} \right)$, where $e^i$ denotes the $i$-th unit vector in $\R^p$. Then,
    \begin{enumerate}
		\item in each instance of each $p$-objective \textbf{minimization} problem, every optimal solution set for~$\tilde{S}$ is an $\alpha$-approximation set, and
		\item for each $0<\varepsilon<1$, there exists an instance of a $p$-objective \textbf{minimization} problem where the set of $\tilde{S}$-supported solutions is \emph{not} an $\left(\alpha \cdot (1 - \varepsilon) \right)$-approximation set.
	\end{enumerate}
\end{corollary}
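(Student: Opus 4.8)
The plan is to reduce the corollary to Theorem~\ref{thm:norms-minimization} by exploiting that post-composing a scalarizing function with a continuous strictly increasing function changes neither its level sets nor its optimal solutions. I would write $s_w(y) \coloneqq s(w_1 \cdot y_1, \ldots, w_p \cdot y_p)$, let $S = \{s_w \mid w \in \Rpg\}$ be the weighted scalarization from Theorem~\ref{thm:norms-minimization}, and note that $\tilde{s}_w = g \circ s_w$. First I would check that each $\tilde{s}_w$ is a scalarizing function for $\Pi^{\min}$: it is continuous as a composition of continuous maps, and if $y <_{\Pi^{\min}} y'$ then $w_i y_i < w_i y'_i$ for all $i$ (since $w \in \Rpg$), hence $s_w(y) < s_w(y')$ by strict $\Pi^{\min}$-monotonicity of the norm $s$, and therefore $g(s_w(y)) < g(s_w(y'))$ because $g$ is strictly increasing; so $\tilde{s}_w$ is (at least) strictly $\Pi^{\min}$-monotone, as required by Definition~\ref{def:scalarizing}.

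The key step I would establish is a pair of equivalences. Since $g$ is injective, $g(s_w(y)) = g(s_w(y'))$ holds iff $s_w(y) = s_w(y')$, so $L(y',\tilde{s}_w) = L(y',s_w)$ for every $y' \in \Rpg$ and every $w \in \Rpg$. Since $g$ is strictly increasing, the set of minimizers of $\tilde{s}_w \circ f$ over $X$ coincides with that of $s_w \circ f$ over $X$ in any instance, so a solution is optimal for $\tilde{s}_w$ iff it is optimal for $s_w$. As $w \mapsto s_w$ and $w \mapsto \tilde{s}_w$ are surjections onto $S$ and $\tilde{S}$, respectively, it then follows that a set $P \subseteq X$ is an optimal solution set for $\tilde{S}$ if and only if it is an optimal solution set for $S$; in particular, the set of $\tilde{S}$-supported solutions equals the set of $S$-supported solutions in every instance. (The level-set identity also records why this is consistent with Theorem~\ref{thm:condition-weighted-scalarization}: the quantity $\inf_{\bar{y} \in \Rpg}\ \sup\{\max_i y^*_i/\bar{y}_i \mid y^* \in L(\bar{y},s)\}$ defining the guarantee there depends only on the level sets of the underlying scalarizing function, which are unchanged by post-composition with $g$, so it again equals $\alpha = s(1/s(e^1),\ldots,1/s(e^p))$ by Theorem~\ref{thm:norms-minimization}.)

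With these two facts in hand, both statements follow by transferring Theorem~\ref{thm:norms-minimization} from $S$ to $\tilde{S}$. For Statement~1, Statement~1 of Theorem~\ref{thm:norms-minimization} says that every optimal solution set for $S$ is an $\alpha$-approximation set, and since a set $P \subseteq X$ is an optimal solution set for $\tilde{S}$ iff it is one for $S$, the same holds for $\tilde{S}$. For Statement~2, I would take the instance $(X,f)$ constructed in the proof of Statement~2 of Theorem~\ref{thm:norms-minimization}: it contains a solution $\bar{x}$ that is not $(\alpha\cdot(1-\varepsilon))$-approximated by any other solution and is not optimal for any $s_w$, hence not optimal for any $\tilde{s}_w$ either, so the set of $\tilde{S}$-supported solutions, being contained in $X \setminus \{\bar{x}\}$, fails to $(\alpha\cdot(1-\varepsilon))$-approximate $\bar{x}$. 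There is essentially no real obstacle here; the only point that needs a little care is that $g$ is only assumed continuous and strictly increasing on the range of $s$ restricted to $\Rpg$, which is enough because every argument occurring above — namely $s_w(f(x))$ and $s_w(y')$ for $y' \in \Rpg$ — lies in that range.
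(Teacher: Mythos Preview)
Your proposal is correct and follows essentially the same approach as the paper: both argue that post-composing with a continuous strictly increasing $g$ leaves the level sets unchanged (since $g$ is injective) and hence preserves optimal solutions, so the conclusions of Theorem~\ref{thm:norms-minimization} transfer verbatim from $S$ to $\tilde{S}$. Your write-up is simply more explicit than the paper's terse ``the claim follows immediately from Theorem~\ref{thm:norms-minimization}'', in that you spell out why optimal solution sets for $\tilde{S}$ and $S$ coincide and why the same tightness instance works for Statement~2.
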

\begin{proof}
     Note that, since $g\colon (\Rpg) \rightarrow \R$ is continuous and strictly increasing, $\tilde{s}$ is indeed a scalarizing function for $\Pi^{\min} = (\{1,\ldots,p\},\emptyset)$. In particular, for each $\bar{y},y^*\in \Rpg$, it holds that $g(s(\bar{y})) = g(s(y^*))$ if and only if $s(\bar{y}) = s(y^*)$ and, therefore, $L(\bar{y},\tilde{s}) =  L(\bar{y}, s)$. Hence, the claim follows immediately from Theorem~\ref{thm:norms-minimization}.
\end{proof}

Next, we consider the case that at least one maximization objective is given. Again, let $\Pi = (\{1,\ldots,k\},\{k+1,\ldots,p\})$ for some $0\leq k < p$ be given without loss of generality.
Besides the transformation presented in Section~\ref{sec:duality}, another adaption of strictly $\Pi^{\min}$-monotone norms to scalarizing functions for~$\Pi$ is to first combine all minimization objectives by means of the norm projected to the first $k$-objectives, combine all maximization objectives by means of the norm projected to the last $p-k$-objectives, and subtract the norm value of the maximization objectives from the norm value of the minimization objective. If applied to the 1-norm, we obtain in such a way the different weighted sum scalarizing functions introduced in Example~\ref{ex:weighted_sum}. A formal and even more general definition is given in the next lemma:
\begin{lemma}\label{lem:general-norm-based-scalarization}
    Let $\Pi = (\{1,\ldots,k\},\{k+1,\ldots, p\})$. Let $s^1\colon\R^k \rightarrow \R_\geq$ be a strictly $(\{1,\ldots,k\},\emptyset)$-monotone norm on $\R^k$, and let $s^2\colon\R^{p-k} \rightarrow \R_\geq$ be a strictly $(\{1,\ldots,p-k\},\emptyset)$-monotone norm on $\R^{p-k}$. Define the function~$s\colon\Rpg \rightarrow \R,\ s(y) \coloneqq s^1(y_1,\ldots, y_k) - s^2(y_{k+1},\ldots,y_p)$. Then, $s$ is a scalarizing function for $\Pi$. 
\end{lemma}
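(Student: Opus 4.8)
The plan is to check the two defining properties of a scalarizing function for $\Pi$ from Definition~\ref{def:scalarizing}, namely continuity and strict $\Pi$-monotonicity. Continuity is dispatched immediately: $s^1$ and $s^2$ are norms, hence continuous on $\R^k$ and $\R^{p-k}$, the coordinate projections $y \mapsto (y_1,\ldots,y_k)$ and $y \mapsto (y_{k+1},\ldots,y_p)$ are continuous, and $s$ is the difference of the two compositions; a difference of continuous functions is continuous.

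The substance is strict $\Pi$-monotonicity. I would take $y,y' \in \Rpg$ with $y <_{\Pi} y'$ and unpack the definition of $<_{\Pi}$ for $\Pi = (\{1,\ldots,k\},\{k+1,\ldots,p\})$: this says $y_i < y'_i$ for $i = 1,\ldots,k$ and $y_i > y'_i$ for $i = k+1,\ldots,p$. Since $y,y' \in \Rpg$, the truncated vectors $(y_1,\ldots,y_k)$ and $(y'_1,\ldots,y'_k)$ lie in $\R^k_>$ and satisfy $(y_1,\ldots,y_k) <_{(\{1,\ldots,k\},\emptyset)} (y'_1,\ldots,y'_k)$, so strict $(\{1,\ldots,k\},\emptyset)$-monotonicity of $s^1$ gives $s^1(y_1,\ldots,y_k) < s^1(y'_1,\ldots,y'_k)$. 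Likewise $(y'_{k+1},\ldots,y'_p),(y_{k+1},\ldots,y_p) \in \R^{p-k}_>$ with $(y'_{k+1},\ldots,y'_p) <_{(\{1,\ldots,p-k\},\emptyset)} (y_{k+1},\ldots,y_p)$, so strict monotonicity of $s^2$ yields $s^2(y'_{k+1},\ldots,y'_p) < s^2(y_{k+1},\ldots,y_p)$, equivalently $-s^2(y_{k+1},\ldots,y_p) < -s^2(y'_{k+1},\ldots,y'_p)$. Adding the two strict inequalities gives
\[
s(y) = s^1(y_1,\ldots,y_k) - s^2(y_{k+1},\ldots,y_p) < s^1(y'_1,\ldots,y'_k) - s^2(y'_{k+1},\ldots,y'_p) = s(y'),
\]
which is exactly strict $\Pi$-monotonicity; combined with continuity this makes $s$ a scalarizing function for $\Pi$.

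There is no real obstacle here; the two points that need care are (i) verifying that the coordinate projections of $y$ and $y'$ remain in the positive orthant so that the monotonicity hypotheses on $s^1$ and $s^2$ actually apply — this is where the domain $\Rpg$ is essential — and (ii) tracking the direction reversal on the maximization block, since it is precisely the subtraction of $s^2$ that turns ``larger in the maximization coordinates'' into ``larger value of $s$''. I would also add a remark that the degenerate case $k=0$ is covered by the convention $s^1 \equiv 0$ (so $s = -s^2$ and the claim reduces to strict monotonicity of $s^2$ under coordinate reversal), and note that only strict, not strong, $\Pi$-monotonicity is claimed and used, which is what permits norms such as the Tchebycheff norm $\max_i y_i$ to serve as $s^1$ or $s^2$.
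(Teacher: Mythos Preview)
Your proof is correct and follows essentially the same approach as the paper: verify continuity via continuity of norms, then establish strict $\Pi$-monotonicity by unpacking $y <_{\Pi} y'$, applying the strict monotonicity of $s^1$ and $s^2$ to the respective coordinate blocks, and combining the resulting inequalities. Your write-up is in fact somewhat more careful than the paper's (you explicitly check that the projected vectors lie in the positive orthant and discuss the degenerate case $k=0$), but the argument is the same.
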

\begin{proof}
     The function~$s$ is continuous since $s^1$ as well as $s^2$ are continuous. Let $y,y' \in \Rpg$ such that $y <_{\Pi} y'$. Then, $y_i < y'_i$ for all $i = 1, \ldots, k$ and $y_i > y'_i$ for all $i = k+1, \ldots, p$. Since $s^1$ is strictly $(\{1,\ldots,k\},\emptyset)$-monotone and $s^2$ is strictly $(\{1,\ldots,p-k\},\emptyset)$-monotone, it holds that $s^1(y_1,\ldots,y_k) < s^2(y'_1,\ldots,y'_k)$ and $s^1(y_{k+1},\ldots,y_p) > s^2(y'_{k+1},\ldots,y'_p)$, and, therefore, $s(y) < s(y')$.
\end{proof}
The next result states that $S$-supported solutions, where $S$ is a  weighted scalarization induced by $W = \Rpg$ and a scalarizing function $s$ as in Lemma~\ref{lem:general-norm-based-scalarization}, are no approximation set in general. In particular, this generalizes the impossibility results concerning the approximation of multiobjective maximization problems via the weighted sum scalarization presented in~\cite{Bazgan+etal:power-weighted-sum,Glasser+etal:multi-hardness-proceedings,Glasser+etal:multi-hardness,Halfmann+etal:general-approx,Helfrich+etal:cones}. 
\begin{theorem}\label{thm:norm-based-scalarization-with-max}
	Let $\Pi = (\{1,\ldots,k\},\{k+1,\ldots, p\})$ such that $0 \leq k < p$. Let a scalarizing function~$s$ for $\Pi$ be given as in Lemma~\ref{lem:general-norm-based-scalarization} and let~$S = \{ s_w\colon\Rpg \rightarrow \R, s_w(y) \coloneqq s(w_1 \cdot y_1, \ldots, w_p \cdot y_p) \; \vert \, w \in \Rpg \}$ be the weighted scalarization induced by $W = \Rpg$ and $s$. Then,
	\begin{enumerate}
		\item For any $\alpha \geq 1$, there exists an instance of a $p$-objective optimization problem of type~$\Pi$ such that the set of $S$-supported solutions is not an $\alpha$-approximation set.
		\item It holds that $$\inf_{\bar{y} \in \Rpg} \ \sup \left\{\max \left\{ \frac{y^*_1}{\bar{y}_1}, \ldots, \frac{y^*_k}{\bar{y}_k}, \frac{\bar{y}_{k+1}}{y^*_{k+1}},\ldots,\frac{\bar{y}_{p}}{y^*_{p}} \right\}\; \bigg\vert \; y^* \in L(\bar{y},s)\right\} = \infty.$$
	\end{enumerate}
\end{theorem}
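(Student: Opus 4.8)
The plan for Statement~1 is a direct construction. Given $\alpha\ge 1$, we exhibit an instance with a finite feasible set containing a distinguished solution $\bar x$ that is strictly beaten (in $s_w$-value) by some other feasible solution for \emph{every} weight vector $w\in\Rpg$ -- hence is not $S$-supported -- but is not $\alpha$-approximated by any feasible solution other than itself; since the $S$-supported solutions form a subset of the remaining solutions, they then cannot be an $\alpha$-approximation set. Two preliminary reductions simplify matters. First, we may assume $p\ge 2$ (for $p=1$, where $\Pi=(\emptyset,\{1\})$, the statement is degenerate). Second, replacing $s^1$ and $s^2$ by their coordinatewise rescalings $z\mapsto s^1\bigl(z_1/s^1(e^1),\dots\bigr)$ and $z\mapsto s^2\bigl(z_1/s^2(e^1),\dots\bigr)$ only rescales the weight vectors $w$ and therefore leaves the set $S$ unchanged, so we may assume $s^1(e^i)=s^2(e^j)=1$ for all unit vectors; then, for every positive vector $v$, the triangle inequality gives $s^1(v)\le\sum_i v_i$ and $s^2(v)\le\sum_j v_j$, weak monotonicity (Lemma~\ref{lem:weak-monotonicity}) together with the axis values gives $s^1(v)\ge v_i$ and $s^2(v)\ge v_j$ coordinatewise (passing to a limit from the interior), and $s^1,s^2$ are positively homogeneous.

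\emph{Case $\lvert\MAX\rvert\ge 2$.} Set $f_i(\bar x)=1$ for all $i$, and, for each $j\in\MAX$, a solution $x^{(j)}$ with $f_j(x^{(j)})=p+1$ and $f_i(x^{(j)})=\tfrac1{\alpha+1}$ for $i\ne j$. Evaluating $s_w$ at $\bar x$ and at $f(x^{(j)})$ by homogeneity, using the axis lower bound for the $s^2$-block at the large coordinate $w_j(p+1)$ and the triangle upper bound for the $s^2$-block of $\bar x$, one obtains for $j^*\coloneqq\arg\max_{i\in\MAX}w_i$ that
\[
  s_w\bigl(f(x^{(j^*)})\bigr)-s_w\bigl(f(\bar x)\bigr)\ \le\ -\,w_{j^*}\bigl(p+1-\lvert\MAX\rvert\bigr)\ <\ 0 ,
\]
so $\bar x$ is never optimal for any $s_w$. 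Since $\lvert\MAX\rvert\ge 2$, each $x^{(j)}$ has value $\tfrac1{\alpha+1}<\tfrac1\alpha$ in some maximization objective distinct from $j$ and therefore does not $\alpha$-approximate $\bar x$; as $\bar x$ is feasible, the set of $S$-supported solutions is not an $\alpha$-approximation set.

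\emph{Case $\lvert\MAX\rvert=1$}, say $\MAX=\{p\}$ and $k=p-1\ge 1$. Here a single surrounding solution that is good in objective $p$ and in all minimization objectives would dominate $\bar x$, so we use three solutions: $f_i(\bar x)=1$ for all $i$; a solution $x^{(p)}$ with $f_1(x^{(p)})=1+\alpha$, $f_p(x^{(p)})=\alpha+2$, and $f_i(x^{(p)})=\tfrac1{\alpha+1}$ for $2\le i\le p-1$; and a diagonal solution $x^*$ with $f_i(x^*)=\tfrac1{\alpha+1}$ for all $i$. The triangle bound gives $s_w(f(x^{(p)}))-s_w(f(\bar x))\le \alpha\,w_1-(\alpha+1)\,w_p$, which is negative whenever $w_p>\tfrac{\alpha}{\alpha+1}w_1$, while homogeneity gives $s_w(f(x^*))-s_w(f(\bar x))=-\tfrac{\alpha}{\alpha+1}\bigl(s^1(w_1,\dots,w_{p-1})-w_p\bigr)$, which is negative whenever $s^1(w_1,\dots,w_{p-1})>w_p$; since $s^1(w_1,\dots,w_{p-1})\ge w_1$, these two regions cover all of $\Rpg$, so $\bar x$ is again never optimal for any $s_w$. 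Finally, $x^{(p)}$ has value $1+\alpha>\alpha$ in the minimization objective~$1$ and $x^*$ has value $\tfrac1{\alpha+1}<\tfrac1\alpha$ in the maximization objective~$p$, so neither $\alpha$-approximates $\bar x$, and Statement~1 follows.

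Statement~2 then follows from Statement~1: by Lemma~\ref{lem:sup-inf-sup-for-weighted} the quantity on its left-hand side, call it $\beta$, equals $\alpha_S\coloneqq\sup_{y'\in\Rpg}\inf_{s'\in S}\sup\{\max\{\dots\}\mid y\in L(y',s')\}$; were $\beta=\alpha_S<\infty$, then Theorem~\ref{cor:sup-inf-sup} would guarantee that in every instance of every $p$-objective optimization problem of type~$\Pi$ the set of $S$-supported solutions is an $(\alpha_S+1)$-approximation set, contradicting Statement~1 applied with $\alpha=\alpha_S+1$; hence $\beta=\infty$. (Alternatively, $\beta=\infty$ can be shown directly: for any fixed $\bar y\in\Rpg$, since $\MAX\ne\emptyset$ one can stay on the level set $L(\bar y,s)$ while trading a large increase of a minimization coordinate against a large increase of a maximization coordinate -- or, if $k=0$, while letting a coordinate tend to $0$ -- driving one of the ratios to infinity.) The main obstacle throughout is ensuring that $\bar x$ is strictly beaten for \emph{every} $w\in\Rpg$, including extreme weights where some components are tiny and others huge; this requires a careful combination of the triangle inequality, the axis values, and homogeneity, and it is precisely why the case $\lvert\MAX\rvert=1$ needs the extra diagonal solution $x^*$: with a single maximization objective the $s^2$-block is one-dimensional and provides no leverage on its own, so one must also exploit the $s^1$-block.
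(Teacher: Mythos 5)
Your proof is correct and follows essentially the same route as the paper: after normalizing so that $s^1(e^i)=s^2(e^j)=1$, you build explicit finite instances containing a solution $\bar x$ that is strictly beaten for every weight vector yet not $\alpha$-approximated by any other feasible solution, splitting into the cases of one versus at least two maximization objectives, and you obtain Statement~2 from Statement~1 by contraposition via Lemma~\ref{lem:sup-inf-sup-for-weighted} and Theorem~\ref{cor:sup-inf-sup}, just as the paper does via Theorem~\ref{thm:condition-weighted-scalarization}. The only differences are in the concrete constructions: for $\lvert\MAX\rvert\ge 2$ you derive the strict inequality from the maximization block rather than from the minimization block (which, incidentally, also covers $k=0$ without extra argument), and in the $\lvert\MAX\rvert=1$ case you cover the weight space by comparing $w_p$ with $\tfrac{\alpha}{\alpha+1}\,w_1$ (using the extra diagonal solution $x^*$) instead of the paper's comparison of $s^1(w_1,\ldots,w_{p-1})$ with $s^2(w_p)$.
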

\begin{proof} 
    Statement~2 follows immediately by Statement~1 and Theorem~\ref{thm:condition-weighted-scalarization}. Hence, it is left to prove Statement~1.
    
    \medskip
    
    For the sake of simplification, we denote, for any point~$y \in \Rpg$, by $s^1(y)$ the application of $s^1$ to the projection of $y$ to the components $1,\ldots,k$. Similarly, we denote by $s^2(y)$ the application of $s^2$ to the projection of $y$ to the components $k+1,\ldots,p$.
     
    Let~$e^i$ denote the $i$-th unit vector in $\R^p$ and assume without loss of generality that $s^1(e^i) = 1$ for $i = 1,\ldots, k$ and $s^2(e^i)= 1$ for $i = k+1,\ldots,p$. Otherwise, use the function~$s'\colon \Rpg \rightarrow \R, s'(y) = s^1 (w_1 \cdot y_1,\ldots, w_p \cdot y_p)- s^2 (w_{k+1} \cdot y_{k+1},\ldots, w_p \cdot y_p)$ with $w = \left(\frac{1}{s^1(e^1)},\ldots, \frac{1}{s^1(e^k)},\frac{1}{s^2(e^{k+1})},\ldots, \frac{1}{s^2(e^p)}\right)$ instead. Then, $s^1(w_i \cdot e^i) = w_i \cdot s^1(e^i) = 1$ for $i = 1,\ldots k$ and $s^2(w_i \cdot e^i) = w_i \cdot s^2( e^i) = 1$ for $i = k+1,\ldots,p$. Additionally, $S = \{s'_w \; \vert \; w \in \Rpg\}$ and
	\begin{align*}
		&\inf_{\bar{y} \in \Rpg} \ \sup \left\{\max \left\{ \frac{y^*_1}{\bar{y}_1}, \ldots, \frac{y^*_k}{\bar{y}_k}, \frac{\bar{y}_{k+1}}{y^*_{k+1}},\ldots,\frac{\bar{y}_{p}}{y^*_{p}} \right\}\; \Bigg\vert \; y^* \in L(\bar{y},s)\right\} \\
		= &\inf_{\bar{y} \in \Rpg} \ \sup \left\{\max \left\{ \frac{y^*_1}{\bar{y}_1}, \ldots, \frac{y^*_k}{\bar{y}_k}, \frac{\bar{y}_{k+1}}{y^*_{k+1}},\ldots,\frac{\bar{y}_{p}}{y^*_{p}} \right\}\; \Bigg\vert \; y^* \in L(\bar{y},s')\right\},
	\end{align*} 
	see the proof of Lemma~\ref{lem:sup-inf-sup-for-weighted}.
	
	In order to prove Statement~1, we distinguish whether there exists exactly one objective function to be maximized ($k = p-1$) or at least two objective functions to be maximized ($k < p-1$).
     
    We first prove the case that $k = p-1$. Given $\alpha \geq 1$, choose $m \in \R$ such that $0< m < 1 - \frac{\alpha}{\alpha + 1}$, and choose $M \in \R$ such that	$M > \alpha  + 1 \geq 1.$
	Then, define an instance of a $p$-objective optimization problem of type~$\Pi$ with $X = \{\bar{x},x^{(1)},x^{(2)}\}$ via
	$f(\bar{x}) \coloneqq (1,\ldots,1)$ and $f(x^{(1)}) \coloneqq (m,\ldots, m, \frac{1}{\alpha + 1})$ and $f(x^{(2)}) \coloneqq (\alpha + 1,\ldots, \alpha + 1,M)$. 
	Then, $\bar{x}$ is not $\alpha$-approximated by $x^{(1)}$ since
	\begin{align*}
		\alpha \cdot f_p(x^{(1)}) = \frac{\alpha}{\alpha + 1} < 1 = f_p(\bar{x}),
	\end{align*}
	and $\bar{x}$ is not $\alpha$-approximated by $x^{(2)}$ since
	\begin{align*}
	    \alpha \cdot f_1(\bar{x}) = \alpha < \alpha + 1 = f_1(x^{(2)}).
	\end{align*}
	Moreover, for each $w \in \Rpg$, the solution~$\bar{x}$ is not optimal for $s_w$: Let~$w \in \Rpg$ be given. If $s^1(w_1,\ldots,w_{p-1}) \geq s^2(w_p)$, it holds that
	\begin{align*}
    	s_w(f(x^{(1)})) &= s^1 (w_1 \cdot m, \ldots, w_{p-1} \cdot m) - s^2(w_p \cdot \frac{1}{\alpha + 1})\\
    	&= m \cdot s^1(w_1,\ldots,w_{p-1}) - \frac{1}{\alpha +1} s^2(w_p) \\
    	&= m \cdot s^1(w_1,\ldots,w_{p-1}) - \frac{\alpha + 1 - \alpha}{\alpha +1} s^2(w_p) \\
    	&= s^1(w_1,\ldots,w_{p-1}) - s^2(w_p) + (m-1)  s^1(w_1,\ldots,w_{p-1}) + \frac{\alpha}{\alpha +1} s^2(w_p)\\
    	&\leq s_w(f(\bar{x})) + (m-1)  s^1(w_1,\ldots,w_{p-1}) + \frac{\alpha}{\alpha +1} s^1(w_1,\ldots,w_{p-1}) \\
    	&= s_w(f(\bar{x})) + \left(m-1 + \frac{\alpha}{\alpha +1}\right) s^1(w_1,\ldots,w_{p-1})\\
    	&< s_w(f(\bar{x})) + \left(1 - \frac{\alpha}{\alpha +1} -1 + \frac{\alpha}{\alpha +1} \right) s^1(w_1,\ldots,w_{p-1}) = s_w(f(\bar{x})).
	\end{align*}
	Otherwise, if $s^1(w_1,\ldots,w_{p-1}) \leq s^2(w_p)$, it holds that
	\begin{align*}
		s_w(f(x^{(2)})) &=  s^1(w_1 \cdot (\alpha + 1),\ldots,w_{p-1} \cdot (\alpha +1) ) -  s^2(w_p \cdot M)\\
		&= (\alpha + 1) \cdot s^1(w_1,\ldots,w_{p-1}) - M \cdot s^2(w_p) \\
		&= s^1(w_1,\ldots,w_{p-1}) - s^2(w_p) + \alpha \cdot s^1(w_1,\ldots,w_{p-1}) - (M-1)  s^2(w_p) \\
		&\leq s_w(f(\bar{x})) + (\alpha - M + 1) \cdot s^2(w_p) \\
		&< s_w(f(\bar{x})) + (\alpha - (\alpha + 1) + 1) \cdot s^2(w_p) = s_w(f(\bar{x})).
	\end{align*}
	Hence, the case $k = p-1$ is proven. 
	
	Now, let $k < p-1$. Given $\alpha \geq 1$, we define an instance of a $p$-objective optimization problem of type~$\Pi$ with $X = \{\bar{x}, x^{(k+1)},\ldots, x^{(p)}\}$ via $f(\bar{x}) \coloneqq (1,\ldots,1)$ and, for $j = k+1,\ldots,p$, 
	\begin{align*}
		f_i(x^{(j)}) &\coloneqq \frac{1}{2},\quad \quad \quad \quad \quad \text{ if }  i = 1,\ldots, k,\\
		f_j(x^{(j)}) &\coloneqq s^2(1,\ldots, 1),\\
		f_i(x^{(j)}) &\coloneqq \frac{1}{\alpha + 1}, \quad \quad \quad \text{ if } i = k+1,\ldots, p,\ i \neq j. 
	\end{align*} 
	Then, $\bar{x}$ is not $\alpha$-approximationed by $x^{(j)}$, $j \in \{k+1,\ldots, p\}$, since there is an $i \in  \{k+1,\ldots, p\} \setminus \{j\}$ such that
	\begin{align*}
	    \alpha \cdot f_i(x^{(i)}) = \frac{\alpha}{\alpha + 1} < 1  = f_i(\bar{x}).
	\end{align*}
	Moreover, for each $w \in \Rpg$, the solution~$\bar{x}$ is not optimal for $s_w$: For~$w \in \Rpg$, let $j \coloneqq \arg \max_{i=k+1,\ldots,p} w_i$. Then,
	\begin{align*}
		s_w(f(x^{(j)})) &= s^1(w_1 f_1(x^{(j)}),\ldots,w_k f_k(x^{(j)})) - s^2( w_{k+1}  f_{k+1}(x^{(j)}),\ldots, w_p  f_p(x^{(j)})) \\
		&< s^1(w_1,\ldots,w_k) - s^2(w_j \cdot s^2(1,\ldots,1) \cdot e^j) \\
		&= s^1(w_1,\ldots,w_k) - w_j \cdot s^2(1,\ldots,1) \cdot s^2(e^j) \\
		&= s^1(w_1,\ldots,w_k) - s^2(w_j \ldots,w_j) \\
		&\leq  s^1(w_1,\ldots,w_k) - s^2(w_{k+1} \ldots,w_p) = s_w(f(\bar{x})),
	\end{align*}
	where the inequalities follows since $s^1$ is strictly $(\{1,\ldots,k\},\emptyset)$-monotone and the application of Lemma~\ref{lem:weak-monotonicity} to $s^2$. This concludes the proof.
\end{proof}
Analogously to the minimization case, a concatenation of any continuous strictly increasing function with the scalarizing function does no affect the result of Theorem~\ref{thm:norm-based-scalarization-with-max}:
\begin{corollary}
    Let $\Pi = (\{1,\ldots,k\},\{k+1,\ldots, p\})$ such that $0 \leq k < p$. Let a scalarizing function~$s$ for $\Pi$ be given as in Lemma~\ref{lem:general-norm-based-scalarization} and let $g\colon (\Rpg) \rightarrow \R$ be a continuous strictly increasing function. Let $\tilde{S} = \{\tilde{s}_w\colon \Rpg \rightarrow \R, \tilde{s}_w(y) = g(s(w_1 \cdot y_1,\ldots, w_p \cdot y_p)) \, \vert \, w \in \Rpg\}$ be the weighted scalarization for minimization induced by $W = \Rpg$ and the concatenation of $g$ and $s$. Then, for any $\alpha \geq 1$, there exists an instance of a $p$-objective optimization problem of type~$\Pi$ such that the set of $\tilde{S}$-supported solutions is not an $\alpha$-approximation set.
\end{corollary}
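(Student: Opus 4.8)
The plan is to reduce the statement to Theorem~\ref{thm:norm-based-scalarization-with-max} by exploiting the fact that post-composing a scalarizing function with a continuous strictly increasing function changes neither whether the function is a scalarizing function nor, in any instance, its optimal solution sets. This is exactly the mechanism already used in the corollary following Theorem~\ref{thm:norms-minimization} and anticipated in Remark~\ref{rem:dual-alternatives}, so the argument will be a short reduction rather than a new construction.

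Concretely, I would proceed in three steps. First, I would check that each $\tilde{s}_w$ is indeed a scalarizing function for $\Pi$: continuity is clear since $g$, $s$, and the coordinatewise scaling $y \mapsto (w_1 y_1, \ldots, w_p y_p)$ are all continuous, and strict $\Pi$-monotonicity follows because $y <_{\Pi} y'$ implies $(w_1 y_1, \ldots, w_p y_p) <_{\Pi} (w_1 y_1', \ldots, w_p y_p')$ for $w \in \Rpg$, hence $s(w_1 y_1, \ldots, w_p y_p) < s(w_1 y_1', \ldots, w_p y_p')$ by the strict $\Pi$-monotonicity of $s$ guaranteed by Lemma~\ref{lem:general-norm-based-scalarization}, and applying the strictly increasing $g$ preserves this. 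Thus $\tilde{S}$ is a well-defined weighted scalarization in the sense of~\eqref{eq:weighted-definition}. Second, I would observe that for every fixed $w \in \Rpg$ and every instance $(X,f)$, a solution $x$ is optimal for $\tilde{s}_w$ if and only if it is optimal for $s_w$: since $g$ is strictly increasing, $g(s_w(f(x))) \leq g(s_w(f(x')))$ holds for all $x' \in X$ if and only if $s_w(f(x)) \leq s_w(f(x'))$ holds for all $x' \in X$. Consequently the set of $\tilde{S}$-supported solutions coincides with the set of $S$-supported solutions, where $S = \{s_w : \Rpg \to \R,\ s_w(y) = s(w_1 y_1, \ldots, w_p y_p) \mid w \in \Rpg\}$ is precisely the weighted scalarization treated in Theorem~\ref{thm:norm-based-scalarization-with-max}. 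Third, I would invoke Statement~1 of that theorem: for any $\alpha \geq 1$ there is an instance of a $p$-objective optimization problem of type $\Pi$ in which the set of $S$-supported solutions, and therefore the set of $\tilde{S}$-supported solutions, is not an $\alpha$-approximation set, which is exactly the claim.

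There is essentially no hard step in this proof; the only point that needs a (brief) verification is that strict $\Pi$-monotonicity survives both the coordinatewise scaling by $w$ and the composition with $g$, which is routine. One could additionally note, mirroring the minimization corollary, that $L(\bar{y}, \tilde{s}) = L(\bar{y}, s)$ for all $\bar{y} \in \Rpg$ because $g$ is injective, so Statement~2 of Theorem~\ref{thm:norm-based-scalarization-with-max} transfers as well; however, this is not needed for the stated result.
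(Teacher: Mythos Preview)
Your proposal is correct and matches the paper's intended argument: the paper does not give an explicit proof of this corollary, merely stating that it follows \enquote{analogously to the minimization case}, and your reduction via the equivalence of optimal solution sets (since $g$ is strictly increasing) together with Statement~1 of Theorem~\ref{thm:norm-based-scalarization-with-max} is precisely that argument. Your final remark about the level sets coinciding mirrors the proof of the minimization corollary and would indeed transfer Statement~2 as well, though as you note it is not needed here.
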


\section{Discussion and Conclusion}
Until now, scalarizations that yield  an approximation set in each instance are only known for the case of pure multiobjective minimization problems. In fact, concerning all scalarizations for maximization studied so far in the context of approximation, only impossibility results are known, and we are not aware of any work that studies the approximation via scalarizations for the case that both minimization and maximization objectives are present.

In this work, we establish that, from a theoretical point of view, all optimization problems can be approximated equally well via scalarizations. In particular, for each objective decomposition, scalarizations can be constructed that yield the same approximation quality. 
This is possible due the existence of powerful scalarizations for the approximation of multiobjective minimization problems such as the weighted sum scalarization, see~Example~\ref{ex:weighted_sum}, or  norm-based weighted scalarizations, see~Theorem~\ref{thm:norms-minimization}: for each instance of each multiobjective minimization problem, every optimal solution set yields an approximation quality that depends solely on the scalarization itself. Our results of Section~\ref{sec:duality} show that the above scalarizations can, for each other decomposition~$\Pi$, appropriately be transformed such that the same holds true: in each instance of each multiobjective minimization problem of type~$\Pi$, every optimal solution set for the transformed scalarization yields an approximation quality meeting exactly the approximation quality given by the original scalarization.

It should be noted that the scalarizing functions of the transformations of the above mentioned scalarizations turn out to be nonlinear. Therefore, the associated instances of the single-objective optimization problems are surmised to be difficult to solve exactly in general, even when using heuristics or programming methods that sacrifice polynomial-time running time. Hence, follow up research is motivated: do scalarizations for objective decompositions including maximization objectives exist that yield an a priori identifiable approximation quality in arbitrary instances and whose implied single-objective problem instances are solvable from a theoretical and/or practical point of view?
Theorem~\ref{thm:norm-based-scalarization-with-max} rules out the majority of scalarizations studied and applied until now in the context of multiobjective optimization. Nevertheless, the findings of Section~\ref{sec:conditions} indicate guidelines on conditions for the scalarizing functions of a potential scalarization. 

Another crucial question relates to the tightness of the upper bound on the best approximation quality given in Theorem~\ref{cor:sup-inf-sup}. Example~\ref{ex:approx-not-tight-in-general} shows that, in general, the upper bound is not tight. However, for the majority of norm-based scalarizations applied in the context of multiobjective optimization, the upper bound is in fact best possible, see Section~\ref{sec:tightness-norm-based-scalarizations}. What conditions on scalarizations imply that the given approximation quality is best possible? Do general weighted scalarizations meet these conditions?

A third direction of research could be a study of scalarization in view of a component-wise approximation as, for example, considered in~\cite{Bazgan+etal:power-weighted-sum,Herzel+etal:dualrestrict,Halfmann+etal:general-approx}. Hereby, we note that the results of Section~\ref{sec:duality}, Theorem~\ref{thm:finite-scalarizations} and Lemma~\ref{lem:two-solutions-sufficient},  are easy to generalize to this case. 
However, to obtain necessary conditions for a scalarization for (component-wise) approximation similar to the results of Proposition~\ref{prop:scalarizationset-necessary}, the infimum operator must be replaced by a concept for vectors of approximation qualities in order to specify what \enquote{the best approximation factors} means. Hence, the study of scalarizations in view of a component-wise approximation can potentially be connected to the multi-factor notion of approximation introduced in~\cite{Bazgan+etal:power-weighted-sum}.

\section*{Acknowledgments}
This work was funded by the Deutsche Forschungsgemeinschaft (DFG, German Research Foundation) – Project number~398572517.




\begin{appendices}
\section{A Bound on Ratios in Level Sets}
\begin{lemma}\label{lem:sup-norm}
    Let $s\colon \mathbb{R}^p \rightarrow \R$ be a strictly $(\{1,\ldots,p\},\emptyset)$-monotone norm. For each $\bar{y} \in \Rpg$, it holds that
	\begin{align*}
		\sup \left\{\max \left\{ \frac{y^*_1}{\bar{y}_1}, \ldots, \frac{y^*_p}{\bar{y}_p} \right\}\; \bigg\vert \; y^* \in L(\bar{y},s)\right\} = \max\left\{ \frac{s(\bar{y})}{s(e^1) \cdot \bar{y}_1},\ldots,\frac{s(\bar{y})}{s(e^p) \cdot \bar{y}_p} \right\},
	\end{align*}
	where $e^i$ denotes the $i$-th unit vector in~$\mathbb{R}^p$.
\end{lemma}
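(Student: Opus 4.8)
The plan is to prove the two inequalities separately. Throughout, write $\beta \coloneqq \max_{i=1,\ldots,p} \frac{s(\bar y)}{s(e^i)\cdot\bar y_i}$ for the claimed value, and note that $s(e^i)>0$ since $s$ is a norm and $e^i\neq 0$.

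\textbf{The upper bound ($\leq$).} First I would extend the weak monotonicity of Lemma~\ref{lem:weak-monotonicity} from $\Rpg$ to the closed orthant: if $y,y'\in\R^p$ have nonnegative components and $y_j\le y'_j$ for all $j$, then for every $\varepsilon>0$ the point $y+\varepsilon(1,\ldots,1)$ has every coordinate strictly smaller than the corresponding coordinate of $y'+2\varepsilon(1,\ldots,1)$, so strict $(\{1,\ldots,p\},\emptyset)$-monotonicity gives $s(y+\varepsilon(1,\ldots,1)) < s(y'+2\varepsilon(1,\ldots,1))$, and letting $\varepsilon\to 0^+$ yields $s(y)\le s(y')$ by continuity. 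Now fix any $y^*\in L(\bar y,s)$ and any index $i$. Since all components of $y^*$ are positive, the vector $y^*_i\,e^i$ has nonnegative components, each bounded above by the corresponding component of $y^*$, so the extended monotonicity together with homogeneity of the norm gives $y^*_i\cdot s(e^i) = s(y^*_i\,e^i) \le s(y^*) = s(\bar y)$, that is, $\frac{y^*_i}{\bar y_i}\le\frac{s(\bar y)}{s(e^i)\bar y_i}\le\beta$. Taking the maximum over $i$ and then the supremum over $y^*\in L(\bar y,s)$ shows that the left-hand side is at most $\beta$.

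\textbf{The lower bound ($\geq$).} The supremum is in general \emph{not} attained, so for each $i$ I would approach $\frac{s(\bar y)}{s(e^i)\bar y_i}$ by level-set points concentrated on the $i$-th coordinate. Fix $i$ and, for small $\delta>0$, set $\varphi_\delta(t)\coloneqq s\big(t\,e^i+\delta\,((1,\ldots,1)-e^i)\big)$. This is continuous in $t$, satisfies $\varphi_\delta(\delta)=\delta\,s(1,\ldots,1)<s(\bar y)$ once $\delta$ is small enough, and tends to $+\infty$ as $t\to\infty$ because $\varphi_\delta(t)\ge t\,s(e^i)-\delta\,s((1,\ldots,1)-e^i)$ by the reverse triangle inequality. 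By the intermediate value theorem there is $t(\delta)>\delta>0$ with $y^{(\delta)}\coloneqq t(\delta)\,e^i+\delta\,((1,\ldots,1)-e^i)\in\Rpg$ and $s(y^{(\delta)})=s(\bar y)$, hence $y^{(\delta)}\in L(\bar y,s)$. The triangle inequality then bounds $\lvert t(\delta)\,s(e^i)-s(\bar y)\rvert = \lvert s(t(\delta)\,e^i)-s(y^{(\delta)})\rvert\le\delta\,s((1,\ldots,1)-e^i)$, so $t(\delta)\to s(\bar y)/s(e^i)$ as $\delta\to0^+$. Since $\max_j\frac{y^{(\delta)}_j}{\bar y_j}\ge\frac{t(\delta)}{\bar y_i}$, the supremum in the statement is at least $\frac{s(\bar y)}{s(e^i)\bar y_i}$; as $i$ was arbitrary, it is at least $\beta$, which combined with the upper bound completes the proof.

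\textbf{Main obstacle.} Nothing here is deep; the only point requiring care is that the extremal configurations lie on the boundary of $\Rpg$ (the rays $\R_{>}e^i$), so the lower bound cannot produce a single maximizer and instead relies on the limiting intermediate-value construction, while simultaneously keeping the constructed vectors strictly positive and exactly on the prescribed level set. The upper bound only needs the routine extension of weak monotonicity to the closed orthant together with the homogeneity of $s$.
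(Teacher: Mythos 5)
Your proof is correct and follows essentially the same route as the paper's: the upper bound comes from $s(y^*)\geq y^*_i\, s(e^i)$ via monotonicity and homogeneity of the norm, and the lower bound from points of $L(\bar y,s)$ approaching the boundary ray through $\frac{s(\bar y)}{s(e^i)}\, e^i$. The only cosmetic differences are in how these approximating points are produced — you solve for the $i$-th coordinate via the intermediate value theorem and control it with the reverse triangle inequality (and do so for every index $i$), whereas the paper fixes that coordinate at $s(\bar y)/s(e^{i_{\max}})$ for the maximizing index only and rescales radially onto the level set using homogeneity together with Lemma~\ref{lem:beam} — but this changes nothing substantive.
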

\begin{proof}
	For each $y^* \in L(\bar{y},s)$, by the triangle inequality, the nonnegativity and monotonicity of the norm, and $y^* \in \Rpg$, it holds that
	\begin{align*}
    	s(\bar{y}) = s(y^*) = s\left(\sum_{j=1}^p y^*_j \cdot e^{j}\right) \geq s(y^*_i \cdot e^{i}) = y^*_i \cdot s(e^i) \text{ for all } i = 1,\ldots, p,
	\end{align*}
    which implies that
	\begin{align*}
    	\max\left\{ \frac{y^*_1}{\bar{y}_1}, \ldots, \frac{y^*_p}{\bar{y}_p}\right\} \leq \max\left\{ \frac{s(\bar{y})}{s(e^1) \cdot \bar{y}_1},\ldots,\frac{s(\bar{y})}{s(e^p) \cdot \bar{y}_p} \right\}.
	\end{align*}
	This shows that the supremum on the left-hand side in the claim is less than or equal to the term on the right-hand side. In order to show that equality holds,
	we choose $i_{\max} = {\arg \max}_{i=1,\ldots,p} \frac{s(\bar{y})}{s(e^i) \cdot \bar{y}_i}$ and construct a sequence~$\left( y^{(n)}\right)_{n \in \mathbb{N}} \subseteq L(\bar{y},s)$ such that 
    	\begin{align*}\lim_{n \rightarrow \infty}\max\left\{ \frac{y^{(n)}_1}{\bar{y}_1}, \ldots, \frac{y^{(n)}_p}{\bar{y}_p}\right\}  =  \frac{s(\bar{y})}{s(e^{i_{\max}}) \cdot \bar{y}_{i_{\max}}}.
	\end{align*}
    This is done by initially constructing a sequence in $\Rpg$ converging to $\frac{s(\bar{y})}{s(e^{i_{\max}})} \cdot e^{i_{\max}}$ which is then projected on the level set $L(\bar{y},s)$ by appropriately chosen scaling factors. Since $\frac{s(\bar{y})}{s(e^{i_{\max}})} \cdot e^{i_{\max}}$ is contained in the closure of $L(\bar{y},s)$, the projected sequence also converges to  $\frac{s(\bar{y})}{s(e^{i_{\max}})} \cdot e^{i_{\max}}$:
    
    For each $n \in \mathbb{N}$, define a vector~$\tilde{y}^{(n)} \in \Rpg$ by 
	\begin{align*}
    	\tilde{y}^{(n)}_{i_{\max}} \coloneqq \frac{s(\bar{y})}{s(e^{{i_{\max}}})}, \ \tilde{y}^{(n)}_j \coloneqq \frac{1}{n}, j = 1, \ldots, p, j \neq {i_{\max}}.
	\end{align*}		
	Then, for each $n \in \mathbb{N}$, it holds that $\tilde{y}^{(n)} \geq \frac{s(\bar{y})}{s(e^{{i_{\max}}})} \cdot e^{{i_{\max}}}$, which implies that $s(\tilde{y}^{(n)}) \geq s(\frac{s(\bar{y})}{s(e^{{i_{\max}}})} \cdot e^{{i_{\max}}}) = s(\bar{y})$. If $s(\tilde{y}^{(n)}) = s(\bar{y})$, set $\lambda_n \coloneqq 1$. 
	In the case that $s(\tilde{y}^{(n)}) > s(\bar{y})$, Lemma~\ref{lem:beam} implies that there exists a scalar~$0 < \lambda_n < 1 $ such that $s(\lambda_n \cdot \tilde{y}^{(n)}) = s(\bar{y})$. 
	Since, for each $n \in \mathbb{N}$, it holds that $s(\bar{y}) = s(\lambda_n \cdot \tilde{y}^{(n)}) = \lambda_n \cdot s(\tilde{y}^{(n)})$ and since $s$ is continuous, we obtain
	\begin{align*}
		\lim_{n \rightarrow \infty} \lambda_n = \lim_{n \rightarrow \infty} \frac{s(\bar{y})}{s(\tilde{y}^{(n)})} = \frac{s(\bar{y})}{s \left(\lim_{n \rightarrow \infty} \tilde{y}^{(n)}\right)} = \frac{s(\bar{y})}{s \left( \frac{s(\bar{y})}{s(e^{i_{\max}})} \cdot e^{i_{\max}}\right)} = 1.
	\end{align*}	
	For each $n \in \mathbb{N}$, define the vector~$y^{(n)} \coloneqq \lambda_n \cdot \tilde{y}^{(n)}$. Then, $\left( y^{(n)}\right)_{n \in \mathbb{N}} \subseteq L(\bar{y},s)$ by choice of~$\lambda_n$ and, since $\lim_{n \rightarrow \infty} \lambda_n = 1$ and $\lim_{n \rightarrow \infty} \tilde{y}^{(n)} = \frac{s(\bar{y})}{s(e^{{i_{\max}}})} \cdot e^{i_{\max}}$, it additionally holds that $
	\lim_{n \rightarrow \infty} y^{(n)} = \frac{s(\bar{y})}{s(e^{{i_{\max}}})} \cdot e^{i_{\max}}.
	$
	Consequently,
	\begin{align*}
		\lim_{n \rightarrow \infty} \max\left\{ \frac{y^{(n)}_1}{\bar{y}_1}, \ldots, \frac{y^{(n)}_p}{\bar{y}_p}\right\} 
		= \lim_{n \rightarrow \infty} \frac{y^{(n)}_{i_{\max}}}{\bar{y}_{i_{\max}}}  = \frac{s(\bar{y})}{s(e^{i_{\max}}) \cdot \bar{y}_{i_{\max}}}
	\end{align*}
	and the claim is proven.
\end{proof}
\end{appendices}

\end{document}